\newtheorem{theorem}{Theorem}[section]
\newtheorem{proposition}[theorem]{Proposition}
\newtheorem{lemma}[theorem]{Lemma}
\newtheorem{corollary}[theorem]{Corollary}
\theoremstyle{definition}
\newtheorem{definition}[theorem]{Definition}
\newtheorem{eg}[theorem]{Example}
\newtheorem{remark}[theorem]{Remark}
\newtheorem{conjecture}{Conjecture}[section]
\newtheorem*{claim}{\textbf{Claim}}
\newtheorem{obs}[theorem]{Observation}
\numberwithin{equation}{section}
\numberwithin{mytheorem}{subsection}
\newcommand{\norm}[1]{\vert \vert {#1} \vert \vert}
\def \no {\noindent}
\newcommand \R{\mathbb{R}}
\newcommand \D{\mathcal{D}}
\newcommand \supo{\operatorname{}}
\newcommand \unit{1_{\ring{\C}}}
\newcommand \C {\mathcal{C}}
\newcommand \U {\mathcal{U}}
\newcommand \rar{\rightarrow}
\newcommand{\nrh}[1]{\operatorname{NRE}\{\ring{#1}\}}
\newcommand{\eq}[1]{\left[{#1}\right]_R^{}}
\newcommand{\bpm}[1]{\operatorname{BPM}\{\ring{#1}\}}
\newcommand{\um}[1]{\operatorname{UM}\{\ring{#1}\}}
\newcommand{\rh}[1]{\operatorname{RH}\{\ring{#1}\}}
\newcommand{\atom}[1]{\mathcal{A}^{\mathcal{U}}_{#1}}
\newcommand{\ring}[1]{\mathcal{R}_{#1}}
\newcommand{\mdsum}[1]{\overline{#1} }
\newcounter{casenum}
\def \GCD{\operatorname{GCD}}
\newenvironment{caseof}{\setcounter{casenum}{1}}{\vskip.5\baselineskip}
\newcommand{\casea}[2]{\vskip.5\baselineskip\par\noindent {\bfseries Case \arabic{casenum}:} #1\\#2\addtocounter{casenum}{1}}
\newcounter{subcasenum}
\title{Neural codes  and  Neural ring endomorphisms}
\author{Neha Gupta}
\author{Suhith K N}
\address{Department of Mathematics\\ Shiv Nadar Institution of Eminence\\ Delhi-NCR}
\email{sk806@snu.edu.in, neha.gupta@snu.edu.in}
\date{September 14, 2023\\ Suhith's research is partially supported by Inspire fellowship under the Inspire program of the Department of Science and Technology(DST),  DST grant IF190980}
\begin{document}
	\maketitle
	\thispagestyle{empty}
\begin{abstract}
	{We investigate combinatorial, topological, and algebraic properties of certain classes of neural codes. We look into a conjecture that states if the minimal \textit{open convex} embedding dimension of a neural code is two, then its minimal \textit{convex} embedding dimension is also two. We prove the conjecture for two interesting classes of examples and provide a counterexample for the converse of the conjecture. We introduce a new class of neural codes, \textit{doublet maximal}. We show that a doublet maximal code is open convex if and only if it is max intersection-complete. We prove that surjective neural ring homomorphisms preserve max intersection-complete property. We introduce another class of neural codes, \textit{circulant codes}. We give the count of neural ring endomorphisms for several sub-classes of this class.	
	 }
\end{abstract}
\no\textbf{Mathematics Subject classification:} 52A37 $ \cdot $ 92B20 $ \cdot $ 54H99 $ \cdot $ 16W20 \\
\noindent{}{\bf KEYWORDS}:\hspace*{0.5em} \emph{\keywords{Neural codes, doublet maximal codes, max-intersection complete, maximal codewords, neural ring endomorphisms, circulant codes, circulant codes with support $ p $ }}

	\bibliographystyle{ieeetr}
	\pagestyle{plain}
	
	\thispagestyle{empty}
	
	\bibliographystyle{plain}
	\pagestyle{plain}
	
	\thispagestyle{empty}
	
\section{Introduction}

\quad The Nobel prize in Medicine for the year 2014 was awarded to Neuroscientist John O'Keefe \cite{o1976place}  for the discovery of place cells (type of neurons) in the rat's hippocampus. Place cells respond when an animal is in a particular region in its environment (stimuli space). Different place cells respond in different regions. The regions in which a place cell responds are  called its place field. Place cells and their place fields encode binary information about the responses of an animal in a given environment. So, the study of binary codes is an essential part of this area of research.  

We define a neural code $ \C $ to be a collection of subsets of the set $ [n]=\{1,2,\dots,n\} $. Each element of $ \C $ is called a codeword. Given a collection of place fields one can associate it with a neural code. Consider a collection of place fields, $\U= \{U_1,\dots,U_n\}  $ in some stimuli space  $ X\subseteq \R^k $. Then the associated neural code for $ \U $ is defined as $$ \C(\U) = \left\{\sigma\in [n] \ \Big\vert\ \displaystyle\bigcap_{j\in \sigma} U_j \backslash \displaystyle\bigcup_{i\not\in\sigma } U_i \not= \phi \right\}. $$ We call $ \atom{\sigma}=\displaystyle\bigcap_{j\in \sigma} U_j \backslash \displaystyle\bigcup_{i\not\in \sigma } U_i $ the atom of a codeword $ \sigma\in \C(\U) $,  and denote $ U_\sigma =\displaystyle\bigcap_{j\in \sigma} U_j  $. Fix $ U_\emptyset= X. $  Figure \ref{figrel} discusses an example to obtain a neural code from a given collection of place fields. Conversely, given any neural code $ \C $, one can associate a collection of regions (subsets of some $ \R^k $) that can represent the neural code geometrically. We say that a neural code   $ \C $ is realizable if there exists a collection $ \U=\{U_1^{},\dots,U_n^{}\}$ with $ U_i\subseteq X \subseteq \R^k, $ such that $ \C=\C(\U). $ Here, $ \U $ is called the realization of $ \C$ and $ X $ its stimuli space. Further, we will address neural codes as simply codes for the rest of the paper. Also, we will fix the notation $ X$ for the stimuli space of a realizable code.
   	\begin{figure}[]
   	\begin{tikzpicture}[scale=0.7]
   		\draw (-1.3,-1.4) rectangle (5.4,4.4);
   		\draw (4.6,3.2)node[above] {$X$}; 

   		\draw (1,1) circle (2cm);
   		\draw (0,-0.5)node[above] {$U_1$};
   		\draw(2,2) circle (2cm);
   		\draw (1.9,3)node[above] {$U_2$};
   		\draw(4.3,-0.2) circle (1cm);
   		\draw (4,-0.5)node[above] {$U_3$};
   		\draw(1.9,1) circle(0.7cm);
   		\draw (1.9,0.7)node[above] {$U_4$};
   	\end{tikzpicture}
 
   	\caption{
   		This figure has four place fields in the stimuli space $ X \subseteq \R^2 $. The associated neural code is  $  \{\emptyset,1,2,3,12,124\} $ Technically the code obtained should have been written as $ \{\emptyset,\{1\},\{2\},\{3\},\{1,2\},\{1,2,4\}\} $. But we abuse the notations for simplicity throughout the paper. }
   	\label{figrel}
   \end{figure}
   
    Experimental data \cite{curto2017makes} showed that the place fields are approximately open convex sets in $\R^2$. So, for every realizable code $ \C $, there is a natural question to ask about the topological properties of the subsets $ U_i $ of $ \U. $ The realization $ \U $ of a code $ \C $ is called open convex if each set $ U_i $ is open convex in $ \R^k $. In this case, $ \C $ is referred as an open convex code. Similarly, we can have a convex or a closed convex code. Next, we discuss about minimal embedding dimension of a code.  Let $ \C $ be a realizable code with $ X\subseteq \R^k $ as its stimuli space. Then $ k $ is said to be minimal embedding dimension of the code $ \C, $ if there exists no $ l<k $ with a  collection $ \U' $ in $ \R^l $  such that $ \C(\U')=\C. $  Franke and Muthiah \cite{franke2018every} provided an algorithm to prove that every code is convex. Cruz et al. \cite{cruz2019open} showed that if a code $\C$ is max intersection-complete\footnote{ A code $ \C $ is max intersection-complete if $ \C $ contains all intersections of its maximal codewords.} then it  is both open convex and closed convex.
 
In 2013, Curto et al. \cite{curto2013neural} explored this topic and brought some algebraic direction to it. They associated a ring structure to a given code $ \C $ on $n $ neurons, and called it a neural ring $ \ring{\C} $ associated with $ \C. $ They defined $ \ring{\C} $  as $ \mathbb{F}_2[x_1,x_2,\dots,x_n]/I_\C $ where  $ I_\C=\{f\in\mathbb{F}_2[x_1,x_2,\dots,x_n] \mid f(c)=0 \text{ for all } c\in \C\}. $ For any codeword $ c$ the characteristic function $ \rho_c $ \footnote{ The characteristic function is given by $\rho_c(v)= \begin{cases}
		1 & \text{ if } v=c \\ 0 & \text{ otherwise}.
	\end{cases} $}  has  $  \underset{c_i=1}{\Pi}x_i\underset{c_j=0}{\Pi}(1-x_j) $ as its polynomial form. Curto and Youngs \cite{curto2020neural} discuss ring homomorphisms between two neural rings. They proved that there is a 1-1 correspondence between code maps $ q:\C\rar \D $ and the ring homomorphisms $ \phi:\ring{\D}\rar \ring{\C}. $ The map $ q $ associated with the ring homomorphism $ \phi $ is denoted by $ q_\phi,$ and is called the associated code map. They also showed that $ \ring{\C} \cong \ring{\D} $ if and only if $ |\C|=|\D|. $ That means, the neural ring loses information of the codewords present in the code and only considers the cardinality of the code. So, they defined some more relevant conditions on the ring homomorphisms, and called these maps called neural ring homomorphisms. Further characterized neural ring homomorphisms with the associated code maps. Lastly, they connected the idea of codes being open convex with neural ring homomorphisms.

Curto et al. \cite{curto2013neural} also defined a neural ideal as $ J_\C =\langle \{\rho_c\mid c\not \in \C\}\rangle. $  Neural ideal is closely associated to Stanley-Reisner ideal \cite{miller2004combinatorial}. Jeffs, Omar and Youngs \cite{jeffs2018homomorphisms} tried to get all ring  homomorphisms from $ \mathbb{F}_2[y_1,\dots,y_n] \rar \mathbb{F}_2[x_1,\dots,x_m] $ that preserve neural ideals. They showed that only specific ring homomorphisms satisfy the above condition. Brown and Curto \cite{brown2022periodic} defined periodic codes inspired by the sound localization system of barn owls. These codes have special patterns that signify the periodicity of the stimulus. They showed that, except for some special cases, this code need not be convex. They introduced a concept known as convex closures, a way of adding codewords to make a code convex. Further, they construct convex closure of periodic codes. We worked with a subclass of periodic codes and called them circulant codes with support $ p $. We count neural ring endomorphisms for numerous codes in this class.

In this paper, we have worked with combinatorial and algebraic properties of some specific kinds of codes. In the following two sections, we work with the combinatorial and topological properties of codes. However, the last two sections are exclusively for the algebraic properties.  This paper is structured as follows. In section 2,  we work with a conjecture given by Franke and Muthiah \cite{franke2018every}.  We provide a few classes of examples in Proposition \ref{lemcon} and Remark \ref{remarksec3} that satisfy this conjecture. Also, we give a counterexample for the converse of this conjecture. We introduce a new class of codes called doublet maximal codes in section 3. The main result in section 3 is Theorem \ref{tdmc}, which states, ``If a code is doublet maximal, then it is open convex if and only if it is max intersection-complete.''  In section 4, we see the relationship of two codes being max intersection-complete via a code map between them (Theorem \ref{thmic}).  In the last section, we work with circulant codes with support $ p $. Figure \ref{fig4} summarizes the main results of this section. 
	
\section{Convex codes in dimension 1 and 2}
Franke and Muthiah \cite{franke2018every} worked on convex codes and wanted to give a direct relation between convex and open convex codes. They gave the following conjecture: 
\begin{conjecture}\cite[Conjecture 2]{franke2018every}
	Suppose $ \C $ is open convex and has a minimal open convex embedding dimension of 2. Then the minimal convex embedding dimension of $ \C $ is 2. \label{conMFSM2}
\end{conjecture}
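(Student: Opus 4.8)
The plan is to leverage the trivial comparison between the two embedding dimensions and then localize the genuine difficulty in dimension one. Since every open convex set is in particular convex, any open convex realization of $\C$ in $\R^2$ is also a convex realization, so the minimal convex embedding dimension is automatically at most $2$. It therefore suffices to rule out minimal convex dimension $0$ and $1$. Dimension $0$ is immediate: convex (indeed arbitrary) subsets of a single point can realize only a code with at most one codeword, and such a code cannot have minimal open convex dimension $2$; so this case does not arise.

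Everything then reduces to the following claim: a code admitting a convex realization $\U=\{U_1,\dots,U_n\}$ in $\R$ already admits an open convex realization in $\R$. Granting this, a hypothetical convex realization of $\C$ in dimension $1$ would produce an open convex realization in dimension $1$, contradicting that the minimal open convex dimension equals $2$ and leaving $2$ as the only possibility for the minimal convex dimension. To prove the reduction I would argue at the level of the interval arrangement on the line: each $U_i$ is an interval, the endpoints of the $U_i$ subdivide $\R$ into finitely many regions, and $\C$ is read off from which intervals meet each region. Theorem~\ref{mainth2} already supplies the passage between open and closed intervals in dimension $1$, so it would be enough to first replace $\U$ by a closed convex realization of the same code and then invoke that equivalence.

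The hard part is exactly this replacement, together with the degenerate cases it must absorb: half-open intervals and single-point intervals can support atoms sitting at isolated endpoints, and opening or closing an interval can merge two adjacent regions and silently delete a codeword. Clearing all such boundary atoms simultaneously while preserving every codeword is the crux, and in full generality this is precisely the open direction of Conjecture~\ref{conMFSM1}; this is why I do not expect a uniform proof valid for all codes. For the concrete class treated in Proposition~\ref{lemcon} I would instead bypass the perturbation argument entirely: for each such $\C$ I would exhibit an explicit combinatorial obstruction — a small configuration forced by the mutual intersection pattern of its maximal codewords that no family of intervals on a line can reproduce — so that $\C$ fails to be convex in dimension $1$ outright, pinning the minimal convex dimension at $2$. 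Verifying that these configurations genuinely obstruct $1$-dimensional convex realizability, while the ambient code remains open convex in dimension $2$, is the main technical step to carry out for that family.
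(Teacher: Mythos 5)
First, a status point: the statement you were asked to prove is an open conjecture, and the paper does not prove it either. The paper's entire contribution here is a supply of supporting examples --- Propositions \ref{lemcon} and \ref{lemcon2} show that any code containing $\{i,j,k,\sigma\}$ with distinct $i,j,k\in\sigma$ (or the analogous triple-overlap pattern $\{i,j,k,\sigma_{ij},\sigma_{ik},\sigma_{jk}\}$) admits no convex realization in dimension $1$, by an explicit interval case analysis that forces some atom such as $\atom{j}$ to be empty; Remark \ref{remarksec3} then exhibits codes like $\C_1=\{1,2,3,1234\}$ in these classes whose minimal open convex dimension is $2$, so their minimal convex dimension is pinned at $2$. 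Your fallback plan --- bypass the perturbation argument and exhibit a combinatorial obstruction to $1$-dimensional convex realizability for the Proposition \ref{lemcon} family --- is exactly the paper's route, and your localization of the difficulty (half-open intervals and singleton atoms at endpoints) is accurate. Your trivial upper bound (open convex $\Rightarrow$ convex, so minimal convex dimension $\leq 2$) and the dimension-$0$ exclusion are fine.

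The genuine gap is in your central reduction, which you state as: ``a code admitting a convex realization $\U=\{U_1,\dots,U_n\}$ in $\R$ already admits an open convex realization in $\R$.'' You present this as hard but plausibly true; in fact it is false, and the paper itself records the counterexample when it rejects Conjecture \ref{conMFSM1}: the code $\C=\{12,23\}$. It is convex in dimension $1$ via mixed intervals such as $U_1=[0,1]$, $U_2=[0,2]$, $U_3=(1,2]$, yet it is not open convex in \emph{any} dimension, since $12$ and $23$ are its two maximal codewords with $12\cap 23=2\notin\C$, so Theorem \ref{tcmip} applies (equivalently: an open convex realization would force $U_2=U_1\sqcup U_3$, a separation of the connected set $U_2$). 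Consequently your proposed two-step passage --- first replace the convex realization by a closed one, then invoke Theorem \ref{mainth2} --- cannot work as stated: Theorem \ref{mainth2} only exchanges open and closed realizations, and $\{12,23\}$ has neither, precisely because the obstruction lives in the half-open intervals you flagged. The version of the reduction actually needed for Conjecture \ref{conMFSM2} is the weaker statement restricted to codes that are open convex somewhere (which excludes $\{12,23\}$), but that restricted statement is essentially the conjecture itself, so the reduction buys nothing; only the example-class half of your proposal survives, and that half coincides with what the paper does.
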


 This conjecture seems to hold. We may not yet have a proof for it, but we have two classes of examples that satisfy the conjecture. Further, we will try to see if the converse of this conjecture holds. 

\begin{proposition}
	Let $  \C $ be a code containing subset $ \{i,j,k,\sigma\} $, where  $i,j,k\in \sigma\subseteq[n] $  and $ i,j,k $ are all distinct elements in $ [n]. $  Then the minimal convex embedding dimension of such a code $ \C $ is greater than 1. \label{lemcon}
\end{proposition}
\begin{proof}
	Let $ \C $ be a given code containing $ \{i,j,k,\sigma\} $.
	We show that this code cannot be convex realizable in $ \R $. If possible, let it have a convex realization $ \U $ in $ \R. $  Let $ l\in \{i,j,k\}, $ then we observe that  $ U_l \cap \atom{l}\not=\emptyset
	$ and $ U_l\cap\atom{\sigma}\not=\emptyset $ as $l \in \sigma.  $ Since atoms are disjoint $ U_l $ must contain at least two points. However as $ U_l $'s are convex sets in $ \R, $ they must be intervals.  
	
	Without loss of generality we may assume that $ U_i $ is open, $ U_j $ is clopen (neither closed nor open) and $ U_k $ is a closed set.   Fix $ U_i=(a_i,b_i) $ for some $ a_i\not= b_i\in \R. $   Since $ ijk \subseteq \sigma \in \C $ we have $ U_i\cap U_j \cap U_k\not=\emptyset. $ This implies that $ U_i\cap U_j \not =\emptyset. $ Therefore we choose $ a_j $ such that $ a_i < a_j < b_i.$ Also, as  $ \atom{j}\not=\emptyset$, we get  $  U_j \cap \bigcap_{l\not=j} (U_l)^c= U_j\cap \left(\bigcup_{l\not=j} U_l\right)^c =U_j\backslash \bigcup_{l\not=j}U_l\not= \emptyset  $. Further, this implies $ U_j\cap U_i^c \not=\emptyset. $ So, we must have $ b_j \in U_i^c. $ We choose $ b_j > b_i $ and construct $ U_j=(a_j,b_j]. $
	The above construction is shown in figure \ref{figconl}.  Now  $ U_k$ must intersect $ U_i\cap U_j $. Therefore we choose $ a_k $ such that $ a_j<a_k<b_i. $ As $ \atom{k}= U_k\backslash \bigcup_{l\not=k}U_l  \not= \emptyset$ and with similar calculations as above we see  $ U_k\cap (U_i\cup U_j)^c\not=\emptyset $. So we must have $ b_k  $ lying in $ (U_i\cup U_j)^c. $ Hence choose $ b_k>b_j, $ and construct $ U_k=[a_k,b_k]. $ But this gives us that $ U_j \subset U_i \cup U_k, $ leaving $ \atom{j}=\emptyset. $ That is a contradiction to the fact that $ j\in\C=\C(\U). $
	
	 Note that we have constructed $ U_j $ and $ U_k $ to the right of $ U_i. $ The proof is similar even if we construct the sets on the left side of $ U_i. $ Therefore the code cannot be convex realized in dimension 1. Hence the minimal convex embedding dimension is greater than 1. 
\end{proof}

\begin{figure}[]
	\begin{center}
		\begin{tikzpicture}[scale=7]
			\draw[<->, ultra thick] (0,0) -- (1.5,0);
			\foreach \x/\xtext in {0.2/$ a_i $,0.4/,0.6/$ a_j $,0.8/,1/$ b_i $,1.2/$ b_j $,1.4/}
			\draw[thick] (\x,0.5pt) -- (\x,-0.5pt) node[below] {\xtext};
			 
			\draw (0.2,1pt) node[above] {$U_i$};
			\draw[{(-)}, ultra thick, green] (0.2,0) -- (1.0,0); \draw (0.6,1pt)node[above] {$U_j$};
			
			\draw[{(-]}, ultra thick, red] (0.6,.0) -- (1.2,0.00);
		\end{tikzpicture}
	\end{center}
	\caption{This figure gives us the construction of $ U_i, U_j $ of the Proposition \ref{lemcon}} \label{figconl}
\end{figure}
\begin{proposition}
		Let $ \C '$ be a code containing the subsets $ \{i,j,k,\sigma_{ij},\sigma_{ik},\sigma_{jk}\}  $ where  $ i,j,k $ are distinct elements of $ [n] $ and $i,j\in \sigma_{ij}\subseteq[n] $ with $ k\notin \sigma_{ij}$, and similarly for $ \sigma_{ik},\sigma_{jk}. $ Then the minimal convex embedding dimension for the code $ \C' $ is greater than 1. \label{lemcon2}
\end{proposition}
\no Proof of Proposition \ref{lemcon2} is similar to proof of Proposition \ref{lemcon}.
\begin{remark}\label{remarksec3}
	 Thus we establish two classes of examples
	\begin{align*}
		\C \supseteq& \{i,j,k,\sigma\} \qquad \left(i,j,k\in \sigma\subseteq[n]\right) \\  \C' \supseteq & \{i,j,k,\sigma_{ij},\sigma_{ik},\sigma_{jk}\} \qquad (\text{as defined above})
	\end{align*} 
that have minimal convex embedding dimension greater than 1. So, if $ \C $ (or $\C' $) has a minimal \textit{open} convex embedding dimension 2, then $ \C $ (or $\C' $) is a supporting class of example for the Conjecture \ref{conMFSM2}. 
\end{remark}
\begin{remark}
	Jeffs \cite{jeffs2019sunflowers} defined sunflower to be a collection of sets $ \{U_1,U_2,\dots,U_n\} $ such that $ U_i\cap U_j $ is nonempty and a constant subset for all $ i\not=j. $ The code we obtain from a convex open sunflower with $ n\geq 3 $ always contains $ \{1,2,3,123\dots n\}. $ Hence the codes obtained from open convex sunflowers always satisfy the hypothesis of Proposition \ref{lemcon}. However, the converse may not be true. For example, the realization of the code $ \{1,2,3,4,123\} $  is not a sunflower but the code satisfies hypothesis of Proposition \ref{lemcon}.
\end{remark}
\begin{eg}
	The code $ \C =\{1,2,3,123\}$ has a convex realization in dimension $ 2 $  (Fig. \ref{figa}). The stimuli space of this code is $ X=U_1\cup U_2\cup U_3 $. By Proposition \ref{lemcon}, this code has no convex realization in dimension 1. Thus the code $ \C $ cannot have an open convex realization in dimension 1. Hence the minimal convex embedding dimension must be 2 for the code $ \C. $ Moreover, the sets $ U_i $ are open (Fig. \ref{figa}) . Thus the same figure gives us an open convex realization for the code $ \C $ in dimension 2.  Therefore we have the minimal open convex embedding dimension as 2 for this code.

\end{eg}
Let us now look at the converse statement of Conjecture \ref{conMFSM2} which states that if the minimal convex embedding dimension of a code is 2, then its minimal open convex embedding is also 2.  Consider the code, $ \C=\{\emptyset,1,2,3,4,6,45,56,123\}. $ The figure shown below (Fig. \ref{figb}) gives a convex realization of the code in $ \R^2 $. Note that in the figure the right-most boundary of $ U_4 $ is included.  Moreover, all the other sets are open in $ \R^2. $ As $ \{1,2,3,123\}\subseteq \C $ by Proposition \ref{lemcon}, $ \C $ does not have a convex realization in $ \R. $ Hence the minimal \textit{convex} embedding dimension for $ \C $ is 2. Further, $ \D=\{\emptyset,4,6,45,56\}\subseteq \C.$ Jeffs \cite[Example 2.1]{jeffs2020morphisms} showed that $ \D $ cannot be an open convex code. Therefore  $ \C $ cannot have an open convex realization. Hence $ \C $ serves as a counterexample for the converse of the Conjecture \ref{conMFSM2}.

\begin{figure}[]
	\begin{subfigure}[b]{0.4\linewidth} \centering
		\begin{tikzpicture}[scale=1]	
			\draw[dashed] (1,1) rectangle (2,3) ;\draw[dashed](0,2) rectangle (2,3);\draw[dashed](1,2) rectangle (3,3);
			\draw [<->] (0,3.2)--(2,3.2);
			\draw [<->] (0.8,3)--(0.8,1);
			\draw [<->] (1,1.8)--(3,1.8);
			\draw (1,3.2) node[above] {$ U_1 $};
			\draw (0.8,1.5) node[left] {$ U_2 $};
			\draw (2.5,1.8) node[below] {$ U_3 $};
		\end{tikzpicture}
		\caption{ $ \C=\{1,2,3,123\} $}
		\label{figa}
	\end{subfigure}
	\begin{subfigure}[t]{0.5\linewidth} \centering
		\begin{tikzpicture}[scale=1]
			
			\draw[dashed] (1,1) rectangle (2,3) ;\draw[dashed](0,2) rectangle (2,3);\draw[dashed](1,2) rectangle (3,3);
			\draw[dashed] (4,2) rectangle (8,3);
			\draw[dashed] (7,2) -- (7,3);
			\draw[dashed] (5,2) -- (5,3);
			\draw (6,2) -- (6,3);
			\draw [<->] (6,3.2)--(8,3.2);
			\draw [<->] (4,3.2)--(6,3.2);
			\draw [<->] (5,1.8)--(7,1.8);
			\draw [<->] (0,3.2)--(2,3.2);
			\draw [<->] (0.8,3)--(0.8,1);
			\draw [<->] (1,1.8)--(3,1.8);
			\draw (1,3.2) node[above] {$ U_1 $};
			\draw (0.8,1.5) node[left] {$ U_2 $};
			\draw (2.5,1.8) node[below] {$ U_3 $};
			\draw (6,1.8) node[below] {$ U_5 $};
			\draw (5,3.2) node[above] {$ U_4 $};
			\draw (7,3.2) node[above] {$ U_6 $};
			\draw (-0.5,0.5) rectangle (8.5,4);
			\draw (0,3.5) node[above] {$ X $};
		\end{tikzpicture}
		\caption{ $ \C=\{\emptyset,1,2,3,4,6,45,56,123\}. $ }
		\label{figb}
	\end{subfigure}
	\caption{}
\end{figure}

\section{Doublet maximal codes}

A codeword $ \sigma  $ is said to be 
\textit{maximal} if it is not contained in any other codeword of $ \C. $ In other words, if there exists $ \tau \in\C $ such that $ \sigma\subseteq \tau, $ then $ \sigma=\tau. $  Maximal codewords play an important role. We will see that atoms corresponding to maximal codewords have special properties. The following lemma gives us one such. 

\begin{lemma}
	Let $ \tau\in \C $ be a maximal codeword, and let $ \C $ have a convex realization, $ \U=\{U_1,U_2,\dots,U_n\} $  in $ \R^m, $  then  \label{mopen}
	\begin{enumerate}
		\item $ U_\tau \subseteq \left(\displaystyle\bigcup_{i\not\in \tau} U_i \right)^c  $ \label{mopen1}
		\item If all $ U_i $'s are open in $ \R^m $ (i.e., $ \C $ is open convex) then $ \atom{\tau} $ is open in $ \R^m $.
	\end{enumerate}
\end{lemma}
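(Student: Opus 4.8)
The plan is to establish part \eqref{mopen1} first, since part (2) will then follow almost immediately. For part \eqref{mopen1} I would argue by contradiction. Suppose there is a point $x \in U_\tau$ that also lies in $U_i$ for some $i \notin \supo(\tau)$. Every point of the ambient space lies in exactly one atom, so let $\sigma$ be the codeword with $x \in \atom{\sigma}$; concretely $\supo(\sigma) = \{\ell \in [n] : x \in U_\ell\}$. Since $x \in U_\tau = \bigcap_{j \in \supo(\tau)} U_j$ we have $\supo(\tau) \subseteq \supo(\sigma)$, and the extra index $i$ gives $\supo(\tau) \subsetneq \supo(\sigma)$, i.e. $\tau \subsetneq \sigma$ as subsets of $[n]$. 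Because $\atom{\sigma}$ contains $x$ it is nonempty, so $\sigma \in \C(\U) = \C$. This contradicts the maximality of $\tau$. Hence no such $x$ exists, which is exactly the assertion $U_\tau \subseteq \left(\bigcup_{i \notin \supo(\tau)} U_i\right)^c$.

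For part (2) I would use part \eqref{mopen1} to collapse the atom onto $U_\tau$. Since $\atom{\tau} = U_\tau \setminus \bigcup_{i \notin \supo(\tau)} U_i = U_\tau \cap \left(\bigcup_{i \notin \supo(\tau)} U_i\right)^c$ and part \eqref{mopen1} shows $U_\tau$ already lies inside that complement, we get $\atom{\tau} = U_\tau$. Now $U_\tau = \bigcap_{j \in \supo(\tau)} U_j$ is a \emph{finite} intersection of the sets $U_j$. If every $U_j$ is open, this finite intersection is open; if every $U_j$ is closed, the intersection is closed. Either way $\atom{\tau}$ inherits the topological property of the realization, which completes the proof.

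I do not expect a serious obstacle here; the whole content sits in the observation in part \eqref{mopen1} that maximality forces $U_\tau$ to avoid every ``off'' set $U_i$ with $i \notin \supo(\tau)$. The one point that needs care is the bookkeeping that ties an arbitrary point back to a genuine codeword of $\C$: I must be sure that the label $\sigma$ read off from which $U_\ell$ contain $x$ really is a codeword of $\C$, which is precisely the definition of $\C(\U)$ together with $x \in \atom{\sigma} \neq \emptyset$. I should also dispatch the trivial edge case $\tau = \emptyset$ (which forces $\C = \{\emptyset\}$ and makes the statement vacuous), so that in the interesting case $\supo(\tau) \neq \emptyset$ and the finite intersection defining $U_\tau$ is genuinely nonempty.
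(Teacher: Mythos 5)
Your proof is correct and takes essentially the same approach as the paper: your codeword $\sigma$ (the full support of $x$) is exactly the paper's $\alpha = \tau\cup\beta$, whose nonempty atom yields a codeword strictly containing $\tau$ and contradicts maximality. Part (2) matches the paper verbatim in substance, collapsing $\atom{\tau}$ to $U_\tau$ via part (1) and invoking finiteness of the intersection.
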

\begin{proof} 
	Let $ \C $ be the given code with the convex realization $ \U=\{U_1,U_2,\dots,U_n\} $ and let $ X $ be its stimuli space.
	\begin{enumerate} 
		\item If $ \tau=[n], $ then $ \left(\bigcup_{i\notin \tau}U_i\right)^c=U_\emptyset=X. $ This implies the result is true, trivially. Let $ \tau $  be some other maximal codeword in $ \C $ such that $ U_\tau \not\subseteq \left(\displaystyle\bigcup_{i\not\in \tau} U_i \right)^c. $ Then there exists some  $ x\in U_\tau $ and $ x \not \in \left(\displaystyle\bigcup_{i\not\in \tau} U_i \right)^c. $ This implies that $ x \in \displaystyle\bigcup_{i\not\in \tau} U_i.$ Therefore there exists   $ k\not \in \tau $ such that $ x \in U_k. $ Define a codeword $ \beta $  such that $  \supo{\beta}= \{i\in [n] \mid i\not\in \tau \text{ and } x \in U_i\}. $ Thus clearly $ \beta \not =\emptyset. $ Denote $ \alpha= \tau\cup \beta  $. Since $ x\in U_{i} $ for all $ i \in \beta $ we have that $ x\in U_\beta.$ This implies  $x\in U_\tau\cap U_\beta = U_\alpha. $ Also, $ x\not\in \bigcup_{i\not \in \alpha} U_i,  $ as  $ \alpha $ contains exactly those $ i $'s for which $ x\in U_i. $ Therefore $ x\in U_\alpha\backslash \displaystyle \bigcup_{i\not \in \alpha}U_i= \atom{\alpha}. $ Hence as $ \atom{\alpha}\not=\emptyset $, we have $ \tau \subsetneq \alpha \in \C(\U)=\C, $ which contradicts the maximality of $ \tau. $ Hence the proof.
		\item We know that $ \atom{\tau}= U_\tau \Big\backslash \displaystyle \bigcup_{i\not \in \tau}U_i = U_\tau \cap \left(\displaystyle\bigcup_{i\not\in \supo{\tau}} U_i \right)^c.   $ Using part (\ref{mopen1}) we have $ \atom{\tau}= U_\tau. $ Since finite intersection of open (or closed) sets is open (or closed) we have the proof.
	\end{enumerate}
\end{proof} 
\begin{remark}
In the case when the code is closed convex the atom of maximal codeword will be a closed set. The proof of this is similar to part 2 of Theorem \ref{mopen1}.
\end{remark}
\noindent Next, we work with codes called max intersection-complete. Cruz et al. \cite{cruz2019open} defined max intersection-complete codes as follows. 
\begin{definition}
	The intersection completion of a code $ \C $ is the collection of all non-empty intersections of codewords in $ \C:  $ $$\widehat{\C}= \left\{\sigma \ \Big\vert\ \sigma  = \bigcap_{v\in \C'} v \text{ for some non-empty sub-code } \C' \subseteq \C \right\}.$$ Denote $ M(\C)  $ to be the collection of all maximal codewords of $ \C. $ Note that $\bigcup_{\sigma\in M(\C)}\sigma=[n]$. A code $ \C $ is said to be \textit{max intersection-complete }if $ \widehat{M(\C)} \subseteq \C. $ For example, if  $ M(\C)=\{\tau_1^{},\tau_2^{}\},$ then $ \C  $ will be max intersection-complete, if and only if $ \tau_1^{}\cap \tau_2^{} \in \C. $ 
\end{definition}
Cruz et al. \cite{cruz2019open} showed that the codes that are max intersection-complete are both open convex and closed convex. Also, they gave an upper bound for the minimal embedding dimension of such codes. We look at the converse of the theorem,  i.e., whether open convex codes are max intersection-complete? The code $ \C=\{3,5,12,13,14,45,123,124,145\}, $ is open convex in dimension 1, but it is not max intersection-complete.  Figure \ref{figmip} has further details of this code $ \C $. We observed that having 3 maximal codewords did break the converse. Hence we propose the following result. 
\begin{figure}[]
	\centering
	\begin{tikzpicture}[scale=5]
		\draw[<->, ultra thick] (-0.3,0) -- (2.1,0);
		\foreach \x/\xtext in {-0.2/0,0/1,0.2/2,0.4/3,0.6/4,0.8/5,1/6,1.2/7,1.4/8,1.6/9,1.8/10,2.0/11}
		\draw[thick] (\x,0.5pt) -- (\x,-0.5pt) node[below] {\xtext};
		\draw (0.3,1pt) node[above] {$U_1$};
		\draw[{(-)}, ultra thick, green] (0.3,0) -- (1.0,0); \draw (0.4,1pt)node[above] {$U_2$};
		\draw[(-), ultra thick, red] (0.4,0) -- (0.7,0);
		\draw (0.15,1pt)node[above] {$U_3$};
		\draw[(-), ultra thick, blue] (0.15,0) -- (0.5,0);
		\draw (0.6,1pt)node[above] {$U_4$};
		\draw[(-), ultra thick, yellow] (0.6,.0) -- (1.6,0);
		\draw (0.8,1pt)node[above] {$U_5$};
		\draw[(-), ultra thick, brown] (0.8,.0) -- (1.8,0);
		\draw (0.9,0.2)node[above] {$ \atom{145} $};
		\draw[->,  thick, black] (0.9,.0) -- (0.9,0.2);
		\draw (0.75,-0.2)node[below] {$ \atom{14} $};
		\draw[->,  thick, black] (0.75,.0) -- (0.75,-0.2);
		\draw (0.65,0.2)node[above] {$ \atom{124} $};
		\draw[->,  thick, black] (0.65,.0) -- (0.65,0.2);
		\draw (0.55,-0.2)node[below] {$ \atom{12} $};
		\draw[->,  thick, black] (0.55,.0) -- (0.55,-0.2);
		\draw (0.45,0.2)node[above] {$ \atom{123} $};
		\draw[->,  thick, black] (0.45,.0) -- (0.45,0.2);
		\draw (0.35,-0.2)node[below] {$ \atom{13} $};
		\draw[->,  thick, black] (0.35,.0) -- (0.35,-0.2);
		\draw (0.25,0.2)node[above] {$ \atom{3} $};
		\draw[->,  thick, black] (0.25,.0) -- (0.25,0.2);	 
		\draw (1.3,-0.2)node[below] {$ \atom{45} $};
		\draw[->,  thick, black] (1.3,.0) -- (1.3,-0.2);
		\draw (1.7,-0.2)node[below] {$ \atom{5} $};
		\draw[->,  thick, black] (1.7,.0) -- (1.7,-0.2);
	\end{tikzpicture}
	\caption{This figure gives a code $ \C=\C(\U)= \{3,5,12,13,14,45,123,124,145\}$ realized by $ \{U_1,U_2,U_3,U_4,U_5\}. $ The code $ \C $ is open convex in dimension 1 and $ {123,145} $ are maximal codewords, whose intersection is $ 1 $ and 1 doesn't belong to $ \C $.  }
	\label{figmip}
\end{figure}

\begin{theorem} \label{tcmip}
	Let $ \C $ be a code that contains the empty set as a codeword along with exactly two maximal codewords. Then $ \C $ is open convex if and only if $ \C $ is max intersection-complete.
\end{theorem}

\begin{proof}

	Let $ M\{\C\}=\{\tau_1,\tau_2\} $. We know by Theorem 1.2 of \cite{cruz2019open} that if $\C$ is max intersection-complete then $\C$ is both open convex and closed convex. So we already have the proof for the necessary condition. Next, the proof for sufficient condition consider $ \C $ to be an open convex code. We will show that $ \C $ is max intersection-complete. Let $ \sigma=\tau_{1}\cap \tau_{2}. $ If $ \sigma= \tau_1^{} \cap \tau_2^{}=  \emptyset  $ then $ \widehat{M(\C)}=\{\emptyset\} \subseteq \C $.   Hence in this case $ \C $ is max intersection-complete. Next, assume $ \sigma\not=\emptyset. $  Let $ \U=\{U_1,\dots,U_n\} $ be a collection of open convex sets in some $ \R^m $ such that, $ \C(\U)=\C. $ For $ \C $ to be max intersection-complete, we need to show $ \sigma \in \C. $ Suppose not. Then, as $ \sigma \not \in \C= \C(\U), $ the atom of $ \sigma $, $ \atom{\sigma}=\emptyset. $ Therefore,
	\begin{equation}\label{key}
		\displaystyle U_\sigma \Big \backslash \displaystyle\bigcup_{j\not\in \supo{\sigma} } U_j= \emptyset \implies U_\sigma \subseteq \displaystyle\bigcup_{j\not\in \supo{\sigma} } U_j
	\end{equation}
	Also, $ \atom{\tau_i^{}}= U_{\tau_i^{}} $ for $ i=1,2$ by Lemma \ref{mopen}. Next, we show that $ U_{\tau_1^{}}  $ and $ U_{\tau_2^{}}  $ form a separation\footnote{A separation of $ X $ is a pair $ U,V $ of disjoint nonempty open subsets of $ X $ whose union is $ X $. The space $ X $ is not connected if there exist a separation. } of $ U_\sigma. $ 
	
	As $ \tau_1^{},\tau_2^{} \in \C=\C(\U), $ we have $ \atom{\tau_{1}} \not = \emptyset $ and $\atom{\tau_2^{}}\not = \emptyset. $ Consequently, $ U_{\tau_{1}} $ and $ U_{\tau_{2}} $ are non-empty. Further, as $ \atom{\tau_{1}} \cap\atom{\tau_{2}}=\emptyset$ we have $ U_{\tau_1^{}} \cap U_{\tau_2^{}}=\emptyset. $ Moreover, $ U_\sigma, U_{\tau_{1}^{}} $ and $ U_{\tau_{2}^{}} $ are open in $ \R^m $ as they are the finite intersection of open sets.  Also, for $ i=1,2$ we have $ U_{\tau_i^{}}= U_{\tau_i^{}} \cap U_\sigma $ as $ U_{\tau_i^{}}\subseteq U_\sigma $. So, $ U_{\tau_{1}^{}} $ and $ U_{\tau_{2}^{}} $ are open in $ U_\sigma. $ Therefore it is only left for us to prove that $ U_\sigma= U_{\tau_1^{}} \cup U_{\tau_2^{}}. $ Observe that $$ U_{\tau_1^{}} =\displaystyle\bigcap_{j\in \tau_1^{}} U_j= \displaystyle\bigcap_{j\in \sigma} U_j \ \cap\ \displaystyle\bigcap_{\substack{{i\in \tau_1^{} }\\ { i \not \in \sigma } }}U_j= U_{\sigma} \cap U_{\tau_1^{}\backslash \sigma}.  $$ Similarly $ U_{\tau_2^{}}= U_\sigma \cap U_{\tau_2^{}\backslash \sigma}.$ Thus,  $ U_{\tau_1^{}} \cup \ U_{\tau_2^{}} = \left(U_{\sigma} \cap U_{\tau_1^{}\backslash \sigma}\right) \cup \left(U_{\sigma} \cap U_{\tau_2^{}\backslash \sigma}\right)= U_\sigma \cap \left(U_{\tau_1^{}\backslash \sigma} \cup U_{\tau_2^{}\backslash \sigma}\right). $ 
	\begin{claim}
		$ U_\sigma \subseteq \left(U_{\tau_1^{}\backslash \sigma} \cup U_{\tau_2^{}\backslash \sigma}\right). $ Suppose not. Then there exists an $ x \in U_\sigma $ such that  $ x\not\in \left(U_{\tau_1^{}\backslash \sigma} \cup U_{\tau_2^{}\backslash \sigma}\right).$ So, $ x\not \in U_{\tau_1^{}\backslash \sigma} \text{ and } x\not \in U_{\tau_2^{}\backslash \sigma}.    $ But, from Equation \ref{key},  $ x\in \bigcup_{j\notin\sigma} U_j$. Thus $ x\in U_k $ for some $ k\notin\sigma. $ Note that, $$k\notin\sigma \implies k\in [n]\backslash \sigma \implies k\in (\tau_1\cup\tau_2 )\backslash \sigma $$ This implies there exists a $ k \in (\tau_{1}\backslash \sigma)\cup (\tau_{2}\backslash \sigma)$ such that $ x\in U_k. $ But this is a contradiction to the fact that $  x\not \in U_{\tau_1^{}\backslash \sigma} \text{ and } x\not \in U_{\tau_2^{}\backslash \sigma}. $ Hence the supposition is wrong, implying $ U_\sigma \subseteq \left(U_{\tau_1^{}\backslash \sigma} \cup U_{\tau_2^{}\backslash \sigma}\right). $
	\end{claim}
\no	By the claim we get $ U_{\tau_1^{}} \cup\ U_{\tau_2^{}}= U_\sigma. $  This means that $ U_{\tau_1^{}}  $ and $  U_{\tau_2^{}} $ form a separation of $ U_\sigma.  $ But $ U_\sigma $ is intersection of connected sets so it must be a connected set itself. Hence cannot have a separation. Thus  $ \sigma\in \C(\U)= \C. $
\end{proof}
\begin{remark}
 The above theorem holds for closed convex. Also, in the proof the only difference is that the separation comes from closed sets. 
\end{remark}
\begin{eg} \label{eg}
	Consider the sets $ \U= \{U_1,U_2,U_3,U_4,U_5,U_6\} $ in $ \R $ as in Figure \ref{ex2}.  Let $ \C= \C(\U)=\{\emptyset,2,4,12,23,45,46\}. $  The code $ \C $ has 4 maximal codewords. Moreover, $ \C $ is  max intersection-complete as well as open convex. But the interesting fact is that one can split the nonempty codewords into $ \C=\C_1 \cup\ \C_2, $ where $ \C_1= \{\emptyset,2,12,23\} $ and $ \C_2=\{\emptyset,4,45,46\}. $ The codes $ \C_1 $ and $ \C_2 $ satisfy the hypothesis of the Theorem \ref{tcmip}. This leads us to define a new class of codes called doublet maximal codes.
	\begin{figure}[]
		\centering
		\begin{tikzpicture}[scale=5]
			\draw[<->, ultra thick] (-0.3,0) -- (2.1,0);
			\foreach \x/\xtext in {-0.2/0,0/1,0.2/2,0.4/3,0.6/4,0.8/5,1/6,1.2/7,1.4/8,1.6/9,1.8/10,2.0/11}
			\draw[thick] (\x,0.5pt) -- (\x,-0.5pt) node[below] {\xtext};
			\draw (0.5,-3.7pt) node[below] {$U_2$};
			\draw[{(-)}, ultra thick, green] (0.3,-0.13) -- (0.8,-0.13); \draw (0.42,0.5pt)node[above] {$U_1$};
			\draw[(-), ultra thick, red] (0.3,0) -- (0.5,0);
			\draw (0.8,1pt)node[above] {$U_3$};
			\draw[(-), ultra thick, blue] (0.5,0) -- (0.8,0);
			\draw (0.96,-6pt)node[above] {$U_4$};
			\draw[(-), ultra thick, yellow] (1,-0.13) -- (1.8,-0.13);
			\draw (1,1pt)node[above] {$U_5$};
			\draw[(-), ultra thick, brown] (1,.0) -- (1.3,0);
			\draw (1.8,1pt)node[above] {$U_6$};
			\draw[(-), ultra thick, blue] (1.3,.0) -- (1.8,0);
			\draw (1.1,0.2)node[above] {$ \atom{45} $};
			\draw[->,  thick, black] (1.1,.0) -- (1.1,0.2);
			
			\draw (0.65,0.2)node[above] {$ \atom{23} $};
			\draw[->,  thick, black] (0.65,.0) -- (0.65,0.2);
			\draw (0.5,0.2)node[above] {$ \atom{2} $};
			\draw[->,  thick, black] (0.5,.0) -- (0.5,0.2);

			\draw (0.35,0.2)node[above] {$ \atom{12} $};
			\draw[->,  thick, black] (0.35,.0) -- (0.35,0.2);

			\draw (1.3,0.2)node[above] {$ \atom{4} $};
			\draw[->,  thick, black] (1.3,.0) -- (1.3,0.2);
			\draw (1.62,0.2)node[above] {$ \atom{46} $};
			\draw[->,  thick, black] (1.62,.0) -- (1.62,0.2);
			\draw (2,-0.2)node[above] {$ X=\R $};
				\draw (1.95,0.2)node[above] {$ \atom{\emptyset} $};
					\draw[->,  thick, black] (1.95,.0) -- (1.95,0.2);
		\end{tikzpicture}
		\caption{This figure gives a code $ \C= \{\emptyset,2,4,12,23,45,46\}.$  }
		\label{ex2}
	\end{figure}
\end{eg}
\begin{definition}[doublet maximal codes]
	A code $ \C $ is  called a \textit{doublet maximal} if $M(\C)=\{\tau_i^{}\}_{i \in [p]}, $ the collection of all maximal codewords of $ \C $, have the property that for every $ i\in [p] $ there exists at most one $ j\not= i $ such that $ \tau_i\cap \tau_j\not=\emptyset. $ 
\end{definition}
\begin{eg}\label{egw}
	\begin{enumerate}
		\item Let $ \C_1 =\{\emptyset,2,4,12,23,45,46\}.$ This is a doublet maximal code with two pairs of maximal codewords $ \{12,23\} $ and $ \{45,46\}. $
		\item Let $ \C_2=\{\emptyset,2,4,12,23\}.$ This is a doublet maximal code with one pair, $ \{12,23\} $ and and one singleton, $ \{4\} $ as maximal codewords. 
		\item Let $ \C_3 =\{3,5,12,13,14,45,123,124,145\}. $ This is a non-example. This code has 3 maximal codewords with all pairwise intersections being non-empty. Also, from Figure \ref{figmip} we can see that this code is not max intersection-complete. 
	\end{enumerate}
\end{eg}
\begin{remark} \label{rdmc}
	The code $\C_1$ in Example \ref{egw} is both open convex and max intersection-complete. Naturally, one wants to know if this is true for all doublet maximal codes. We have successfully generalized Theorem \ref{tcmip} to all doublet maximal codes. Before we state the generalization we introduce restriction of a code. 
\end{remark}
\begin{definition}
	Let $\C$ be a code on $n$ neurons and $\Gamma\subseteq \C$. Then the restriction of the code $\C$ to $\Gamma$ is defined as,
	$$\C\vert_\Gamma=\{\alpha\in \C\mid \alpha\subseteq \gamma \text{ for some } \gamma\in\Gamma \}.$$ For example let $\C=\{3,4,12,34,123,345\}$ and $\Gamma=\{34,123\}$ then $\C\vert_\Gamma=\{3,12,34,123\}.$
\end{definition}
\no Now, we give the generalization of Theorem \ref{tcmip} in the following result.
\begin{theorem} \label{tdmc}
	Let $ \C $ be a doublet maximal code with $ \emptyset \in \C $ then $ \C $ is open (or closed) convex if and only if $ \C $ is max intersection-complete.
\end{theorem}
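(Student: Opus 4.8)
The plan is to prove the two implications separately, with the reverse implication being immediate and the forward implication carrying all the content. For the direction that max-intersection completeness implies open (and closed) convexity, I would simply invoke the result of Cruz et al.\ \cite{cruz2019open} quoted in the introduction: every max-intersection complete code is both open convex and closed convex. This requires no use of the doublet hypothesis at all.

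For the converse, suppose $\C$ is open (or closed) convex with realization $\U=\{U_i\}_{i\in[n]}$ in $\R^m$, and write $M(\C)=\{\tau_i\}_{i\in[p]}$. The first step is a purely combinatorial reduction. I would show that in a doublet maximal code every intersection of three or more distinct maximal codewords is empty: among any three maximal codewords the intersection graph (edges being nonempty pairwise intersections) has maximum degree one, so at least one of the three pairs is disjoint, forcing the triple intersection to be empty. Consequently $\widehat{M(\C)}$ consists only of the maximal codewords themselves, which already lie in $\C$, together with the pairwise intersections $\sigma_{ij}:=\tau_i\cap\tau_j$ over partner pairs (those with $\tau_i\cap\tau_j\neq\emptyset$). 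Thus it suffices to prove that $\sigma:=\sigma_{ij}\in\C$ for each partner pair $\{\tau_i,\tau_j\}$.

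The second step fixes such a pair and argues by contradiction, adapting the separation argument of Theorem \ref{tcmip} to this local situation. Assuming $\sigma\notin\C$, we have $\atom{\sigma}=\emptyset$, hence $U_\sigma\subseteq\bigcup_{k\notin\supo(\sigma)}U_k$, while $U_\sigma\neq\emptyset$ since $U_{\tau_i},U_{\tau_j}\subseteq U_\sigma$. Set $A=\bigcup_{k\in\supo(\tau_i)\setminus\supo(\sigma)}U_k$ and $B=\bigcup_{k\in\supo(\tau_j)\setminus\supo(\sigma)}U_k$. Using Lemma \ref{mopen} I would record that $\atom{\tau_i}=U_{\tau_i}$ and $\atom{\tau_j}=U_{\tau_j}$ are nonempty and open (resp.\ closed), and that $U_{\tau_i}\subseteq U_\sigma\cap A$ and $U_{\tau_j}\subseteq U_\sigma\cap B$, so both pieces are nonempty. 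The heart of the proof is to show $U_\sigma=(U_\sigma\cap A)\sqcup(U_\sigma\cap B)$. Here the doublet hypothesis does the work in two places: (i) if some $x\in U_\sigma$ lay in $U_k$ for an index $k\notin\supo(\tau_i)\cup\supo(\tau_j)$, then the codeword realized at $x$ would sit inside a maximal codeword $\tau_m\neq\tau_i,\tau_j$ with $\tau_m\supseteq\sigma\neq\emptyset$, making $\tau_m$ a second partner of $\tau_i$, which is impossible, so $U_\sigma\subseteq A\cup B$; and (ii) a point of $U_\sigma\cap A\cap B$ would realize a codeword meeting both $\supo(\tau_i)\setminus\supo(\tau_j)$ and $\supo(\tau_j)\setminus\supo(\tau_i)$, which cannot be contained in $\tau_i$, in $\tau_j$, or in any third maximal codeword (again the at-most-one-partner condition), so the two pieces are disjoint. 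Since $A$ and $B$ are open (resp.\ closed), $U_\sigma\cap A$ and $U_\sigma\cap B$ form a separation of $U_\sigma$ into two nonempty relatively open (resp.\ closed) sets, contradicting the connectedness of $U_\sigma$ as an intersection of convex sets. Hence $\sigma\in\C$.

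I expect the main obstacle to be precisely steps (i) and (ii): isolating the two maximal codewords of a given partner pair from all the remaining maximal codewords of $\C$. Theorem \ref{tcmip} settles the global two-maximal case, but here one must check that the other maximal codewords neither reach into $U_\sigma$ nor glue the two sides together, and it is exactly the doublet condition that no maximal codeword has two partners which guarantees both. The closed convex case is handled identically, with ``open'' replaced by ``closed'' throughout, since a connected set admits no partition into two disjoint nonempty relatively closed sets either.
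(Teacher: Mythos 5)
Your proof is correct, and strategically it is the same as the paper's: the reverse implication is Cruz et al., and the forward implication reduces max-intersection completeness to the partner pairs $M(\C)_2$ and runs the separation argument of Theorem \ref{tcmip} on each pair. The difference is in how much is actually proved. The paper's entire proof is the single sentence that Theorem \ref{tcmip} is ``applied iteratively'' on $M(\C)_2$, which is not literally licensed: Theorem \ref{tcmip} assumes $\C$ has \emph{only} two maximal codewords, and a doublet maximal code generally has more, so the cited theorem's hypothesis fails and one cannot restrict to a partner pair without checking that the remaining maximal codewords do not interfere. Your write-up supplies exactly the missing verifications: first the combinatorial observation that the doublet condition kills all triple (and higher) intersections of maximal codewords, so $\widehat{M(\C)}$ consists of $M(\C)$ together with the partner-pair intersections; and then, in the separation argument for a fixed pair $\{\tau_i,\tau_j\}$ with $\sigma=\tau_i\cap\tau_j\neq\emptyset$, your steps (i) and (ii) use the at-most-one-partner condition to show that a point of $U_\sigma$ witnessing a neuron outside $\supo(\tau_i)\cup\supo(\tau_j)$, or a point of $U_\sigma\cap A\cap B$, would force a third maximal codeword $\tau_m\supseteq\sigma$ and hence a second partner of $\tau_i$ --- precisely the interference the paper's one-liner glosses over. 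Your variant of the separation, $U_\sigma=(U_\sigma\cap A)\sqcup(U_\sigma\cap B)$ with $A,B$ the unions of the receptive fields indexed by $\supo(\tau_i)\setminus\supo(\sigma)$ and $\supo(\tau_j)\setminus\supo(\sigma)$, is a mild and harmless reshaping of the paper's decomposition $U_\sigma=U_{\tau_1}\cup U_{\tau_2}$ in Theorem \ref{tcmip}, and it invokes Lemma \ref{mopen} in the same way (with the same closed-set modification for the closed convex case). In short: same route, but your version is self-contained and rigorous where the paper's proof is only a gesture, so what your approach buys is a complete argument; what the paper's buys is brevity at the cost of an unverified application of a theorem outside its stated hypotheses.
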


\begin{proof}
	Let $\C$ be a doublet maximal code. Assume $\C$ to be open convex. We will show that $\C$
	is max intersection-complete.  The strategy for proving the sufficient condition is to use Theorem \ref{tcmip}, iteratively. We now discuss the details. Let $\U=\{U_1,\dots,U_n\}$ be an open convex realization of $\C$ with $U_i\subseteq \R^k$.
	  Let $R=\{(\sigma,\tau)\in M(\C)\times M(\C)\mid \sigma\cap\tau \not=\emptyset\}.$ Then $R$ defines an equivalence relation on $ M(\C).$ Let $[\sigma]_R^{}$ denote the equivalence class of $\sigma\in M(\C)$ with respect to $R$. Then, by the definition of doublet maximal codes,  $ \big\vert\eq{\sigma}\big\vert\in\{1,2\}$. Let $\big\vert\{\eq{\sigma}\mid\sigma\in M(\C)\}\big\vert=m$. Let us choose some order on $\{\eq{\sigma}\mid\sigma\in M(\C)\} $ and write it as  $\left\{\eq{\sigma^1},\dots,\eq{\sigma^m}\right\}.$We further partition $\U$ into $\{\U_1,\dots,\U_m\}$, where $$\U_i=\left\{U_j\ \Big\vert\  j\ \in\bigcup_{\alpha\in\eq{\sigma^i}} \alpha\right\}.$$ Note that $\C(\U_i)$ is an open convex sub-code of $\C$ with  $M(\C(\U_i))=\eq{\sigma^i}$. So,  $\C(\U_i)=\C|_{\eq{\sigma^{i}}}$. Observe  $\emptyset\in \C(\U_i)$ for all $i\in[m]$. Moreover, $\C=\cup_{i=1}^m \C(\U_i)$ since $\{\U_1,\dots,\U_m\}$ is a partition of $\U$. For all $i\in[m], \vert M(\C(\U_i))\vert\in\{1,2\}$ and $\emptyset\in\C(\U_i)$ implies that each $\C(\U_i)$ satisfy the hypothesis of Theorem \ref{tcmip}. Moreover,  $\C(\U_i)$'s are all open convex. Hence by Theorem \ref{tcmip}, for all $i\in[m],$ $\C(\U_i)$ is max intersection-complete.  
	
	Finally, we will show that $\C$ is max intersection-complete. For that, we will show that $\widehat{M(\C)}\subseteq \C$. Let $M'\subseteq M(\C)$. We will consider various cases for $M'$ and show that in each case, $\cap_{v\in M'}\ v \in \C. $ Note that if $ |M'|\leq1$, there is nothing to prove. Further cases are as follows:
	
	\no\textbf{Case 1:} $|M'|> 2$. Then by the definition of doublet maximal code $\cap_{v\in M'}\ v=\emptyset$. Since $\emptyset\in\C$, we are done for this case. \\
	  \no \textbf{Case 2: }  $|M'|=2$. Let $M'=\{\tau_1,\tau_2\}$.Then $ \cap_{v\in M'}\ v =\tau_1\cap\tau_2.$ We have following two sub-cases: \\
	  \textbf{Case 2a:} There exists an $i\in[m]$ such that $\tau_1,\tau_2\in\eq{\sigma^i}.$ In this case $\tau_1$ and $\tau_2$ are the only maximal codewords of the code $\C(\U_i)$. Since $\C(\U_i)$ is max intersection-complete, $\tau_1\cap\tau_2\in\C(\U_i)\subseteq\C$. Hence the case.\\
	  \textbf{Case 2b:} There exist $i,j\in[m]$ with $i\not=j$  such that $\tau_1\in\eq{\sigma^i}$ and $\tau_2\in\eq{\sigma^j}$. Since $i\not=j$ we have $\tau_1\cap\tau_2=\emptyset.$ Thus, in this case, $\tau_1\cap\tau_2\in\C$. Hence the case.\\
	  Therefore, for given any $M'\subseteq M(\C),$ $\cap_{v\in M'}\ v\in\C$. Thus $\widehat{M(\C)}\subseteq\C$. Hence $\C$ is max intersection-complete.
	\\ The proof for the necessary condition comes directly from Theorem 1.2 of \cite{cruz2019open} which states that if $\C$ is max intersection-complete then $\C$ is both open convex and closed convex.
\end{proof}
So far, we have studied the type of codewords in a code and captured that essence to connect it
with the topological properties of the code, like open convex and closed convex. However, in the
remaining part of the paper, we will work in the algebraic direction of codes. We will explore the algebraic tools developed over the past decade to study codes, like neural rings and neural ring homomorphisms. Then connect the code’s algebraic properties to the code’s properties based on the type of codewords, like max intersection-complete. Moreover, in the remaining part of our paper, we will work with the binary form of the codewords instead of the set form used in previous sections. The binary form makes it easier for us to work with neural rings. Also, we will use the same binary form in section 5 where we define specific matrices based on this form. This will help us observe some exciting results.

\section{Neural ring homomorphisms and max intersection-complete codes}
\subsection{Background and Preliminaries}
In this section, we consider the  codewords in their binary form. For any $ c\in \C $ we will write $ c=c_1^{}c_2^{}\cdots c_n^{}, $ where $ c_i^{} $ is $ 1 $ if $ i\in c $ and 0 otherwise. This is same as seeing $ \C\subset\{0,1\}^n. $ Curto and Youngs \cite{curto2020neural}	gave description of neural ring homomorphisms as follows
\begin{definition}
	Let $ \C \subset \{0,1\}^n $ and $ \D\subset \{0,1\}^m $ be codes, and let $ \ring{\C}= \mathbb{F}_2[y_1,\dots,y_n]/I_{\C} $ and $ \ring{\D}= \mathbb{F}_2[x_1,\dots,x_m]/I_{\D}  $ be the corresponding neural rings.  A ring homomorphism $ \phi:\ring{\D}\rar \ring{\C} $ is a neural ring homomorphism if $ \phi(x_j)\in\{y_i\mid i \in [n]\} \cup \{0,1\} $ for all $ j\in [m],$ where $ x_i=\displaystyle\sum_{\{d\in\D\mid d_i=1\}} \rho_d $. A neural ring homomorphism $ \phi $ is a neural ring isomorphism if it is a ring isomorphism and its inverse is also a neural ring homomorphism. 
\end{definition}    
  At the beginning of their paper, Curto and Youngs \cite{curto2020neural}   discuss ring homomorphisms between two neural rings. They proved that there is a 1-1 correspondence between code maps $ q:\C\rar \D $ and the ring homomorphisms $ \phi:\ring{\D}\rar \ring{\C}. $ The map $ q, $ associated with the ring homomorphism $ \phi $ is denoted by $ q_\phi. $   Later, the authors classify all the neural ring homomorphisms using the following theorem: 
\begin{theorem}\cite[Theorem 3.4]{curto2020neural} \label{thmnycc}
	A map $ \phi:\ring{\D}\rar \ring{\C} $ is a neural ring homomorphism if and only if $ q_\phi $ is a composition of the following elementary code maps:
	\begin{enumerate}
		\item Permutation 
		\item Adding a trivial neuron (or deleting a trivial neuron)
		\item Duplication of a neuron (or deleting a neuron that is a duplicate of another)
		\item Neuron projection (or deleting a not necessarily trivial neuron)
		\item Inclusion (of one code into another)
	\end{enumerate}
	Moreover, $ \phi $ is a neural ring isomorphism if and only if $ q_\phi $ is a composition of maps $ (1)-(3). $   
\end{theorem}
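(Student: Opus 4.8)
The plan is to pass through the standard dictionary identifying $\ring{\C}$ with the ring $\mathbb{F}_2^{\C}$ of $\mathbb{F}_2$-valued functions on the code, which converts the algebraic condition defining a neural ring homomorphism into a purely combinatorial description of the associated code map, from which the decomposition into elementary maps can be read off. First I would record the identification sending $y_i$ to the coordinate function $c\mapsto c_i$; under the already-quoted $1$-$1$ correspondence $\phi\leftrightarrow q_\phi$, the homomorphism $\phi$ acts by precomposition, $\phi(f)=f\circ q_\phi$, so that $\phi(x_j)$ is the function $c\mapsto (q_\phi(c))_j$. The defining condition $\phi(x_j)\in\{y_i\mid i\in[n]\}\cup\{0,1\}$ then says exactly that, for each output neuron $j\in[m]$, the $j$-th coordinate of $q_\phi(c)$ is, as $c$ ranges over $\C$, either a fixed input coordinate $c_{\sigma(j)}$, or identically $0$, or identically $1$. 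This trichotomy is the engine of both directions.

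For the direction asserting that a composition of elementary maps yields a neural ring homomorphism, I would verify the claim one map at a time by computing $\phi(x_j)$: permutation and inclusion send each $x_j$ to some $y_i$; a projection sends each surviving $x_j$ to $y_j$; a duplication sends the new generator to the duplicated $y_i$; and adding a trivial neuron sends the new generator to the constant $0$ or $1$. I would then observe that the neural condition is stable under composition, since the composite of two ring homomorphisms carries a generator to a generator-or-constant and then again to a generator-or-constant; hence any finite composition of the five maps is a neural ring homomorphism.

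The substantive direction starts from an arbitrary neural ring homomorphism $\phi$ and uses the trichotomy to partition $[m]$ into the outputs that copy an input (through $\sigma$), those that are constantly $0$, and those that are constantly $1$. I would then realize $q_\phi$ as an explicit finite composition: delete, by projections, every input neuron of $\C$ not in the image of $\sigma$; use permutation together with duplication to produce exactly one copy of each required input neuron $\sigma(j)$ in the slot demanded by $j$, with duplication absorbing the case where several outputs copy the same input; add a trivial neuron, constantly $0$ or $1$, for each constant output; and finally note that the code so produced is $q_\phi(\C)$, a subcode of $\D$, and apply an inclusion to land in $\D$. The main obstacle here is bookkeeping rather than any conceptual difficulty: one must check at each stage that the intermediate object is a legitimate code map — in particular that a neuron introduced as ``trivial'' really is constant on the current code, and that the final inclusion is valid because $q_\phi(\C)\subseteq\D$ (which itself follows from $\phi$ being well defined with target $\ring{\C}$) — and, crucially, that the composite of all these steps is literally $q_\phi$ and not merely some map with the same neuron-to-neuron rule.

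For the ``moreover'' statement I would argue that, among neural ring homomorphisms, $\phi$ is an isomorphism if and only if $q_\phi$ is a bijection $\C\rar\D$. Maps $(1)$–$(3)$ each induce bijective code maps whose inverses are again of the same three types — a permutation inverts to a permutation, a duplication to the deletion of a duplicate, and adding a trivial neuron to deleting one — so their compositions are precisely the invertible neural ring homomorphisms. Conversely, the two remaining generators are exactly those that destroy bijectivity: a projection can merge distinct codewords, and an inclusion fails to be surjective unless $\C=\D$; so a bijective $q_\phi$ must decompose using $(1)$–$(3)$ alone. I expect the delicate point to be confirming that the inverse of such a composite is itself assembled from $(1)$–$(3)$, which is what guarantees that $\phi^{-1}$ is again a neural ring homomorphism.
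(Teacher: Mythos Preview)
The paper does not prove this theorem at all: it is quoted verbatim from Curto--Youngs \cite[Theorem~3.4]{curto2020neural} as background, and the surrounding text simply records the statement before moving on to Theorem~\ref{thnrh}. There is therefore no ``paper's own proof'' to compare your proposal against.

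For what it is worth, your outline is essentially the argument given in the Curto--Youngs paper itself: identify $\ring{\C}$ with $\mathbb{F}_2^{\C}$, interpret the neural condition $\phi(x_j)\in\{y_i\}\cup\{0,1\}$ as a trichotomy on the output coordinates of $q_\phi$, and then factor $q_\phi$ accordingly through projections, permutation/duplication, trivial-neuron insertions, and a final inclusion. The bookkeeping concerns you flag (that the intermediate codes are what you claim, that the composite is literally $q_\phi$, and that the inverse of a composition of maps $(1)$--$(3)$ is again of that form) are exactly the points one has to verify, and none of them hides a genuine obstruction.
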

Lastly, Curto and Youngs \cite{curto2020neural} connected the idea of codes being open convex with neural ring homomorphisms using the following theorem,
\begin{theorem}\cite[Theorem 4.3]{curto2020neural} \label{thnrh}
	Let $ \C $ be a code containing the all-zeros codeword and $ q:\C \rar\D $ a surjective code map corresponding to a neural ring homomorphism. Then if $ \C $ is convex (open convex), $ \D $ is also convex (open convex) with $ d(\D) \leq d(\C)$\footnote{$ d(\C) $  is used by the authors to denote the minimal open convex embedding dimension of the code $ \C. $}.  
\end{theorem}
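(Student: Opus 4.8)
The plan is to push a convex realization of $\C$ forward along $\phi$, rather than to argue abstractly. The key observation is that, by definition, a neural ring homomorphism $\phi:\ring{\D}\rar\ring{\C}$ is pinned down by the values $\phi(x_j)\in\{y_1,\dots,y_n\}\cup\{0,1\}$, and the associated code map satisfies $q_\phi(c)_j=\phi(x_j)(c)$; this is the very data that Theorem \ref{thmnycc} organizes into elementary code maps. So I would take an optimal (open) convex realization $\U=\{U_1,\dots,U_n\}$ of $\C$ in $\R^{d(\C)}$ and manufacture a realization of $\D$ in the same Euclidean space by relabelling and copying the $U_i$ according to $\phi$.

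Concretely, for each $j\in[m]$ I set $V_j=U_i$ when $\phi(x_j)=y_i$, $V_j=\emptyset$ when $\phi(x_j)=0$, and $V_j=\R^{d(\C)}$ when $\phi(x_j)=1$. Since each $U_i$, the empty set, and the whole space are all (open) convex in $\R^{d(\C)}$, the collection $\V=\{V_1,\dots,V_m\}$ is an (open) convex family in dimension $d(\C)$. Thus both halves of the conclusion follow the instant I establish $\C(\V)=\D$: the code $\D$ is (open) convex, and, because $\V$ lives in $\R^{d(\C)}$, its minimal (open) convex embedding dimension obeys $d(\D)\le d(\C)$.

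The core step is the identity $\C(\V)=\D$. For a point $x\in\R^{d(\C)}$ let $c(x)$ be the codeword with $c(x)_i=1$ exactly when $x\in U_i$; since $\U$ realizes $\C$, each $c(x)$ lies in $\C$ and every codeword of $\C$ arises this way, so that $\{c(x)\mid x\in\R^{d(\C)}\}=\C$. A line-by-line comparison of the definitions then gives $x\in V_j\iff q_\phi(c(x))_j=1$, so the $\V$-codeword cut out by $x$ is precisely $q_\phi(c(x))$. Consequently $\C(\V)=\{q_\phi(c)\mid c\in\C\}=q_\phi(\C)$, and surjectivity of $q_\phi$ upgrades this to $\C(\V)=\D$.

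The one place needing care, and where I expect the real work to sit, is the value $\phi(x_j)=1$, i.e. a neuron that fires on every codeword of $\D$: it must be realized by an unbounded set, and one has to be sure that the background region $\R^{d(\C)}\setminus\bigcup_i U_i$ does not secretly contribute a codeword absent from $\D$. This is exactly the point at which the hypothesis that $\C$ contains the all-zeros codeword enters, since it makes the background the atom of a genuine codeword, so that the always-on and always-off neurons (the $1$ and $0$ values of $\phi$, equivalently the trivial-neuron and inclusion moves of Theorem \ref{thmnycc}) are realized without introducing spurious codewords; absent it one must instead route through the elementary-map decomposition. The remaining content, permutations and duplications, is immediate from the construction, so the only genuine obstacle is this verification that no extra codewords appear together with the reverse inclusion supplied by surjectivity.
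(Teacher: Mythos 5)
Your proof is correct, but note that the paper does not prove this statement at all: it is imported verbatim as Theorem 4.3 of Curto--Youngs \cite{curto2020neural}, so the only proof to compare against is the one in that reference, which goes through the decomposition of Theorem \ref{thmnycc}: a surjective $q_\phi$ is a composition of permutations, adding/deleting trivial or duplicate neurons, and projections, and one checks for each elementary map separately that an (open) convex realization can be adjusted without increasing the ambient dimension. Your pushforward construction is genuinely different: you inline all the cases into the single assignment $V_j\in\{U_i,\emptyset, X\}$ dictated by $\phi(x_j)\in\{y_i\}\cup\{0,1\}$, and the identity $q_\phi(c)_j=\phi(x_j)(c)$ reduces $\C(\V)=q_\phi(\C)=\D$ to a pointwise verification, with surjectivity supplying the reverse inclusion. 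What your route buys is uniformity (no case analysis, no induction over the composition) and a transparent role for the all-zeros hypothesis: it is exactly what lets you take the stimulus space to be all of $\R^{d(\C)}$, since enlarging $X$ changes only the atom of $\mathbf{0}$, so the value $\phi(x_j)=1$ is realized by the convex open set $\R^{d(\C)}$ without creating spurious codewords; if $\mathbf{0}\notin\C$ the stimulus space is forced to equal $\bigcup_i U_i$, which need not be convex, and that is precisely where both your construction and the theorem itself would break. What the elementary-map route buys is modularity: the per-map analysis is reused elsewhere in this paper (e.g., Lemmas \ref{maxiso} and \ref{maxpro} and Theorem \ref{thmic} run exactly such case analyses), whereas your argument proves only this one statement. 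One cosmetic caveat: your remark that an always-on neuron ``must be realized by an unbounded set'' is not quite the right invariant --- what is needed is only that the stimulus space itself be (open) convex, which your choice $X=\R^{d(\C)}$ secures --- but nothing in your argument actually depends on unboundedness.
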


\begin{remark}
We observe that the above theorem holds for closed convex codes too. The proof can be obtained similar to the original version given by Curto and Youngs \cite{curto2020neural}.  
\end{remark}
\subsection{Main Theorem}
Now we will try to connect neural ring homomorphisms with the max intersection-complete property. For the remainder of the section, we assume that $ \C $ is a code on $ n $ neurons and the number of neurons of code $ \D $ will be specified if and when required.  
\begin{obs} \label{obssig}
	Let $ q:\C \rar \D $ be a code map corresponding to a given neural ring homomorphism $ \phi:\ring{\D}\to \ring{\C}. $ If $ \sigma \subseteq \tau $ in $ \C $ then $ q(\sigma)\subseteq q(\tau)  $ in $ \D. $
	
 This observation is fairly computational and can be obtained by applying any of the five maps of Theorem \ref{thmnycc} to an arbitrary codeword of $\C$.
\end{obs}
\begin{lemma}
	Let $ q:\C \rar \D $ be either a permutation, or adding/ deleting a trivial or duplicate neuron, then  $\tau \in \C $ is a maximal codeword if and only if $ q(\tau) \in \D $ is a maximal codeword. \label{maxiso}
\end{lemma}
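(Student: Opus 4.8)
The plan is to reduce the statement to a purely order-theoretic fact: the three maps listed (permutation, adding/deleting a trivial neuron, adding/deleting a duplicate neuron) are exactly the elementary code maps that, by Theorem \ref{thmnycc}, generate neural ring \emph{isomorphisms}, and each such map is a bijection whose inverse is again a map of the same type. First I would record that every one of these operations is invertible within the list: a permutation is undone by a permutation, adding a trivial neuron is undone by deleting a trivial neuron, and duplicating a neuron is undone by deleting that duplicate. Consequently, if $q:\C\rar\D$ is one of these maps, then $q$ is a bijection and $q^{-1}:\D\rar\C$ is once more a code map associated to a neural ring homomorphism.

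The key step is to promote Observation \ref{obssig} from a one-sided implication to a two-sided one. Observation \ref{obssig} gives, for any code map arising from a neural ring homomorphism, that $\sigma\subseteq\tau$ in $\C$ implies $q(\sigma)\subseteq q(\tau)$ in $\D$. Applying the same observation to $q^{-1}$ (which, by the previous paragraph, is also such a code map) and substituting $q(\sigma),q(\tau)$ yields the converse: $q(\sigma)\subseteq q(\tau)$ implies $\sigma\subseteq\tau$. Hence $q$ is an order-isomorphism between $(\C,\subseteq)$ and $(\D,\subseteq)$, i.e. $\sigma\subseteq\tau \iff q(\sigma)\subseteq q(\tau)$, and it is moreover a bijection.

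From here the lemma is immediate, since maximality is defined solely through the containment order. If $\tau$ is maximal in $\C$ but $q(\tau)$ is not maximal in $\D$, there is $\eta\in\D$ with $q(\tau)\subsetneq\eta$; writing $\eta=q(\sigma)$ (using surjectivity) and applying the order-isomorphism gives $\tau\subsetneq\sigma$ in $\C$, contradicting maximality of $\tau$. The reverse direction is the same argument run with $q^{-1}$ in place of $q$.

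The only real obstacle is justifying that $q^{-1}$ is again a neural ring homomorphism code map, so that Observation \ref{obssig} may legitimately be invoked for it; this is exactly where the hypothesis matters, and it is guaranteed by the fact that maps $(1)$--$(3)$ in Theorem \ref{thmnycc} are explicitly listed together with their inverses. I would also remark that the lemma genuinely fails for the remaining elementary maps $(4)$ neuron projection and $(5)$ inclusion, since these need not be bijective and inclusion in particular can destroy maximality by embedding a code into a larger one; this is precisely why restricting to the isomorphism-generating maps is essential.
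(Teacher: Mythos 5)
Your proposal is correct and takes essentially the same route as the paper: the paper's proof likewise notes that $q$ is a bijection (its associated neural ring homomorphism being an isomorphism), applies Observation \ref{obssig} to deduce $q(\tau)\subsetneq q(\tau_1)\implies \tau\subsetneq\tau_1$, and handles the converse by observing that $q^{-1}$ is again a map of types (1)--(3) in Theorem \ref{thmnycc} and so satisfies the same hypotheses. Your repackaging of this as a single statement that $q$ is an order-isomorphism of $(\C,\subseteq)$ onto $(\D,\subseteq)$ is merely a cleaner organization of the identical argument.
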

\begin{proof}
	If $ q $ is either a permutation, or adding/ deleting a trivial or duplicate neuron then the corresponding neural ring homomorphism is an isomorphism. This implies that $ q $ is a bijection \cite[Proposition 2.3]{curto2020neural}. 
	
	Let $ \tau\in \C $ be a maximal codeword. Suppose  $ q(\tau) $ is not a maximal codeword in $ \D $. Then there exists $ q(\lambda)\in \D $ such that $ q(\tau)\subsetneq q(\lambda). $ This implies $ \tau \subsetneq \lambda $ as $ q $ is a bijection. This is a contradiction to the fact that $ \tau  $ is a maximal codeword in $ \C. $
	
	Conversely, if $ q(\tau) $ is maximal codeword in $ \D. $ Then one can show that $ \tau $ is a maximal codeword in $ \C $ using $ q^{-1} $ and the same idea as used in the first part of the proof. This works because $ q^{-1} $ is again either a permutation, or adding / deleting a trivial or duplicate neuron and so fits the hypothesis of the necessary conditions.
	 \hfill
\end{proof}
\begin{lemma}
		Let $ q: \C \rar \D $ be a projection. If $ \sigma \in \D $ is a maximal codeword then there exists a maximal codeword $ \tau \in \C $  such that $ q(\tau)= \sigma. $ \label{maxpro}
\end{lemma}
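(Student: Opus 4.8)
The plan is to exploit two features of a neuron projection $q:\C\rar\D$: that it is surjective by construction (since $\D=q(\C)$), and that it is monotone in the sense of Observation \ref{obssig}. The whole argument then reduces to a short ``lift and enlarge'' procedure, with no genuine difficulty.

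First I would use surjectivity to produce a preimage. Since $\sigma\in\D=q(\C)$, there is at least one codeword $\tau_0\in\C$ with $q(\tau_0)=\sigma$; the set $q^{-1}(\sigma)$ is nonempty. (Concretely, if $q$ is the projection deleting neuron $k$, then $q(\tau)=\tau\setminus\{k\}$, so $q^{-1}(\sigma)\subseteq\{\sigma,\ \sigma\cup\{k\}\}$, and surjectivity guarantees at least one of these two sets lies in $\C$.) Next I would enlarge $\tau_0$ to a maximal codeword: because $\C$ is finite, every codeword sits inside some maximal codeword, so I choose $\tau\in\C$ maximal with $\tau\supseteq\tau_0$.

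To finish, I would combine monotonicity with the maximality of $\sigma$. Applying Observation \ref{obssig} to the inclusion $\tau_0\subseteq\tau$ gives $\sigma=q(\tau_0)\subseteq q(\tau)$, and $q(\tau)\in\D$. Since $\sigma$ is a maximal element of $\D$, it is not properly contained in any codeword of $\D$; hence $q(\tau)\supseteq\sigma$ forces $q(\tau)=\sigma$. Thus $\tau$ is a maximal element of $\C$ with $q(\tau)=\sigma$, as required.

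The only point that needs care — the main (mild) obstacle — is the direction and strictness of the inclusion supplied by Observation \ref{obssig}: we need $q$ to preserve $\subseteq$, so that enlarging $\tau_0$ can only enlarge its image, and we need the inclusion to be non-strict so that the conclusion survives the case $q(\tau)=\sigma=q(\tau_0)$. Both hold for a projection, where $q(\tau)=\tau\setminus\{k\}$ is manifestly monotone. It is worth emphasizing that, unlike Lemma \ref{maxiso}, we cannot argue through $q^{-1}$ here, since a projection need not be injective; the finiteness-based enlargement of $\tau_0$ to a maximal codeword is precisely what replaces the bijectivity exploited in that earlier lemma.
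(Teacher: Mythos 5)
Your proof is correct, but it follows a genuinely different route from the paper's. The paper exploits the fact that the fiber of $\sigma$ under a single-neuron projection has an explicit two-element description, $q^{-1}(\sigma)\subseteq\{\tau_0,\tau_1\}$ with $\tau_0=\sigma 0$ and $\tau_1=\sigma 1$, and then argues by contradiction that the largest fiber element present in $\C$ (namely $\tau_1$ if $\tau_1\in\C$, else $\tau_0$) is \emph{itself} a maximal codeword: any strict extension $\tau_2\supsetneq\tau_1$ would have $q(\tau_2)\supseteq\sigma$, hence $q(\tau_2)=\sigma$ by maximality of $\sigma$, forcing $\tau_2\in\{\tau_0,\tau_1\}$, which is impossible. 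You instead take an arbitrary preimage $\tau_0$, saturate upward to a maximal $\tau\supseteq\tau_0$ using finiteness of $\C$, and then use monotonicity (Observation \ref{obssig}) together with maximality of $\sigma$ in $\D$ to conclude $q(\tau)=\sigma$; the fiber structure never enters. Each approach buys something: the paper's case analysis additionally identifies \emph{which} codeword serves as the maximal preimage, which makes the later ``Projection'' step of Theorem \ref{thmic} concrete, whereas your argument is shorter, needs no cases, and generalizes verbatim to any inclusion-preserving surjective code map (for instance a composition of several projections), since it never uses that fibers have at most two elements. One point you handled well, and which the paper's proof implicitly requires too: Observation \ref{obssig} must be read with non-strict inclusions (a projection can collapse $\tau_0\subsetneq\tau_1$ to equal images), and your remark that the bijectivity argument of Lemma \ref{maxiso} is unavailable here, with the finiteness-based enlargement replacing it, is exactly the right diagnosis.
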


\begin{proof}
	Let us assume that $ q: \C\to 
	\D $ is a projection map by deleting the last ($ n^{\text{th}} $) neuron of codewords of $ \C. $ Then clearly $ q $ a is surjective map. Let $ \sigma= \sigma_1^{}\sigma_2^{}\cdots\sigma_{n-1}^{}\in \D.$  Therefore there exists $ \tau \in \C $ such that $ q(\tau)= \sigma. $  Moreover, we precisely know the choices of $ \tau. $ It can either be $ \sigma $ followed by $ 1 $ or $ 0. $ Label $ \alpha:= \sigma_1^{}\sigma_2^{}\cdots\sigma_{n-1}^{}0$ and $ \beta:=\sigma_1^{}\sigma_2^{}\cdots\sigma_{n-1}^{}1. $ Now $ \C $ may have $ \alpha$ or $ \beta, $ or both as its elements.  Clearly, $ \alpha \subseteq \beta, $ therefore the case in which both $ \alpha $ and $ \beta $ exist in $ \C $ is redundant. So, we only have the following two cases. \\
	\textbf{Case 1:} $\beta \in \C.$ In this case we claim $ \beta $ is a maximal codeword in $ \C. $ Suppose not. Then there exists $ \gamma\in \C $ such that $ \beta\subsetneq  \gamma$ then by Observation \ref{obssig} we have $ q(\beta)\subseteq q(\gamma). $ But as $ \sigma=q(\beta) $ is a maximal codeword in $ \D $ we get $ q(\beta) = q(\gamma). $ This implies $ \beta=\gamma $ or $ \alpha=\gamma. $  This is a contradiction as $ \beta\subseteq \gamma $ and $ \alpha \subseteq \beta $ and so, $ \alpha\not=\gamma. $ \\
	\textbf{Case 2:}  $ \beta^{}\not\in \C.$ In this case we claim that $ \alpha $ is maximal codeword and the proof is similar to the previous case.\\
	Hence the proof.
\end{proof}
\begin{remark}
	
	Converse of Lemma \ref{maxpro} need not hold. For example consider the code $ \linebreak\C=\{100,010,001,011,101,110\} $ and project the code to get $ \D= \{00,10,01,11\} . $ Clearly, $ 011 \in \C $ is a maximal code but $ q(011)=01 \subseteq 11. $ This implies that $ 011 $ is no more a maximal codeword after projection.
\end{remark}
	\begin{remark}
	In this remark we see binary representation of intersection of two codewords. We will use this idea in our next proof. 
	 Let $ \alpha,\beta \in \C $ be two codewords and $ \gamma= \tau_1^{}\cap \tau_2^{}. $ Let $\alpha=\alpha_1\cdots\alpha_n$, $\beta=\beta_1\cdots\beta_n$ and $\gamma=\gamma_1\cdots\gamma_n$ be their binary representation, respectively. Then we observe that binary representation of  $\gamma  $ is given as: $ \gamma_j = \begin{cases}
			1\qquad &\text{ if } \alpha_j=\beta_j=1 \\
			0\qquad & \text{ otherwise}.
		\end{cases} $
\end{remark}
\no Next we have the main result of this section. 
\begin{theorem}
	Let $ q:\C \rar\D $ be a surjective code map corresponding to a neural ring homomorphism. Then if $ \C $ is max intersection-complete, so is $ \D. $ \label{thmic}
\end{theorem}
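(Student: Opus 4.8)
The plan is to reduce to the elementary code maps of Theorem \ref{thmnycc} and check preservation of max-intersection completeness for each surjective type, exploiting that every such map commutes with codeword intersections. First I would invoke Theorem \ref{thmnycc} to write $q$ as a composition of elementary code maps. Since $q$ is surjective, the inclusion map cannot occur (an inclusion $\C\hookrightarrow\D$ is surjective only when it is the identity), so $q$ is a composition of permutations, additions/deletions of trivial neurons, duplications/deletions of duplicate neurons, and projections. Each of these four types is itself surjective, so every factor in the composition is a surjective elementary map; hence by induction on the length of the composition it suffices to prove the statement when $q$ is a single elementary map of one of these four types.

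The one routine ingredient I would record first is that $q$ commutes with intersections, i.e. $q\!\left(\bigcap_i \tau_i\right)=\bigcap_i q(\tau_i)$ for codewords $\tau_i\in\C$. Indeed, as noted in the second Remark following Lemma \ref{maxpro}, the intersection of codewords is the coordinatewise product (logical AND), and each of the four operations acts on coordinates merely by permuting them, appending a constant or duplicate coordinate, or deleting a coordinate; each of these plainly commutes with coordinatewise AND.

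With this in hand I would split into two cases. If $q$ is bijective (permutation, trivial neuron, or duplicate neuron), then by Lemma \ref{maxiso} we have $\tau$ maximal in $\C$ iff $q(\tau)$ maximal in $\D$, so $M(\D)=q(M(\C))$; given any non-empty $\D'\subseteq M(\D)$, writing $\D'=q(\C')$ with $\C'\subseteq M(\C)$ gives $\bigcap_{w\in\D'}w=q\!\left(\bigcap_{v\in\C'}v\right)$ by the commutation, and $\bigcap_{v\in\C'}v\in\widehat{M(\C)}\subseteq\C$ since $\C$ is max-intersection complete, so the intersection lies in $q(\C)=\D$. If instead $q$ is a projection, let $\{\sigma_1,\dots,\sigma_r\}\subseteq M(\D)$; by Lemma \ref{maxpro} each $\sigma_i$ admits a maximal preimage $\tau_i\in\C$ with $q(\tau_i)=\sigma_i$, whence $\bigcap_i\sigma_i=q\!\left(\bigcap_i\tau_i\right)$ with $\bigcap_i\tau_i\in\widehat{M(\C)}\subseteq\C$, so again $\bigcap_i\sigma_i\in\D$. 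In either case every intersection of maximal codewords of $\D$ lies in $\D$, i.e. $\widehat{M(\D)}\subseteq\D$, as required.

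The crux of the argument is the projection case: because projection is not injective, the maximal codewords of $\D$ are not in any obvious bijection with those of $\C$, and it is precisely Lemma \ref{maxpro} that supplies a \emph{maximal} lift for each $\sigma_i$. Once those maximal lifts are available, the commutation of $q$ with intersection together with the max-intersection completeness of $\C$ closes the case immediately. The only other point needing care is the initial reduction, namely confirming that in a surjective decomposition the inclusion map does not appear and that all remaining factors are surjective, which is what legitimizes the induction.
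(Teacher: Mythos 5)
Your proposal follows essentially the same route as the paper's proof: reduce via Theorem \ref{thmnycc} to the surjective elementary maps, handle the bijective types (permutation, adding/deleting a trivial or duplicate neuron) through Lemma \ref{maxiso}, and handle projection through the maximal lifts supplied by Lemma \ref{maxpro}. You differ in two ways, both to your credit. First, you replace the paper's four separate coordinatewise computations by the single observation that every elementary map commutes with coordinatewise intersection; this is correct (each map only permutes, appends a constant or duplicate, or deletes a coordinate, all of which commute with coordinatewise AND) and it packages the paper's case work cleanly. Second, you verify the condition for an arbitrary non-empty subset of $M(\D)$, whereas the paper only checks pairwise intersections $\sigma_1^{}\cap\sigma_2^{}$; since $\widehat{M(\D)}$ contains intersections over arbitrary non-empty subcodes of $M(\D)$ and the intersection of two maximal codewords need not itself be maximal, the pairwise check does not literally exhaust the definition, so your version is in fact the more complete argument.

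The one step you should repair is the initial reduction. From the surjectivity of $q$ you infer that no inclusion can appear in the decomposition, on the grounds that a proper inclusion is not surjective; but an interior factor of a surjective composition need not be surjective (only the final factor must be), so a priori a proper inclusion could occur in the middle. For instance, $\{10,01\}\hookrightarrow\{10,01,11\}$ followed by deletion of the second neuron composes to a surjection onto $\{1,0\}$, even though the inclusion factor is not surjective. What is true --- and what the paper itself invokes by citing Curto and Youngs (via Theorem \ref{thnrh}) --- is that a surjective code map corresponding to a neural ring homomorphism admits \emph{some} decomposition using only the maps $(1)$--$(4)$; this is a fact imported from the literature, not a formal consequence of Theorem \ref{thmnycc} together with surjectivity of the composite. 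With that citation substituted for your parenthetical argument, your induction (each of the four surjective elementary types preserves max-intersection completeness) goes through and the proof is complete.
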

\begin{proof}
	By Theorem \ref{thnrh} the surjective code map will be a composition of permutations, adding/ deleting a trivial or duplicate neuron, or projection. So, it is sufficient to assume all of them independently and prove the above statement. 	Let $\alpha, \beta \in \D $ be maximal codewords, we need to show that $ \alpha \cap \beta\in \D. $	
\no\textbf{\textit{Permutation:}} As $ q $ is a bijection,  there exists unique $ \sigma,\tau \in \C $ such that $ \alpha=q(\sigma), \text{ and } \beta=q(\tau). $ By Lemma \ref{maxiso}, $ \sigma,\tau \in \C $ are maximal codewords. This implies by hypothesis $\lambda= \sigma^{}\cap \tau \in \C. $ Let $\sigma=\sigma_1\cdots\sigma_n$, $\tau=\tau_1\cdots\tau_n$ and $\lambda=\lambda_1\cdots\lambda_n$ be their binary representation, respectively. Further, let $ p\in S_n$ be a permutation.   Then we have $ \alpha= \sigma_{p(1)}^{}\sigma_{p(2)}^{}\cdots\sigma_{p(n)}^{} $ and $\beta=\tau_{p(1)}^{}\tau_{p(2)}^{}\cdots\tau_{p(n)}^{}$ Then let $ q(\sigma)\cap q(\tau)=\alpha^{} \cap \beta^{}: = \gamma = \gamma_1\gamma_2\cdots\gamma_n; \text{ where }$ $$ \gamma_j= \begin{cases}
		1\qquad &\text{ if } \alpha_{j}^{}=\beta_{j}^{}=1 \\
		0\qquad & \text{ otherwise}
	\end{cases} = \begin{cases}
		1\qquad &\text{ if } \sigma_{p(j)}^{}=\tau_{p(j)}^{}=1 \\
		0\qquad & \text{ otherwise}
	\end{cases} = \lambda_{p(i)}. $$ This implies $\gamma=  \lambda_{p(1)}^{}\lambda_{p(2)}^{}\cdots\lambda_{p(n)}^{}= q(\lambda)\in \D. $
\newline

\no\textbf{\textit{Adding a trivial or duplicate neuron:}} As $ q $ is a bijection,  there exists unique $ \sigma,\tau \in \C $ such that $ \alpha=q(\sigma), \text{ and } \beta=q(\tau). $ By Lemma \ref{maxiso}, $ \sigma,\tau \in \C $ are maximal codewords. This implies by hypothesis $\lambda= \sigma^{}\cap \tau \in \C. $ Let $\sigma=\sigma_1\cdots\sigma_n$, $\tau=\tau_1\cdots\tau_n$ and $\lambda=\lambda_1\cdots\lambda_n$ be their binary representation, respectively. Then  $ \alpha= \sigma_{1}^{}\sigma_{2}^{}\cdots\sigma_{n}^{}d$ and $\beta= \tau_{1}^{}\tau_{2}^{}\cdots\tau_{n}^{}e$, where, $ d,e\in\{0,1\}  $  depending upon the map $ q. $  It is clear that $ \alpha^{} \cap \beta^{}= \lambda_{1}^{}\lambda_{2}^{}\cdots\lambda_{n}^{}f,$ where $f=\begin{cases}
	1\qquad &\text{ if } d=e=1 \\
	0\qquad & \text{ otherwise}
\end{cases}$. As $d,e$ depend on the map $q$ we get   $\alpha^{} \cap \beta^{}=q(\lambda) \in \D.$
	\newline
\textit{\textbf{Deleting a trivial or duplicate neuron:}}
	As $ q $ is a bijection,  there exists unique $ \sigma,\tau \in \C $ such that $ \alpha=q(\sigma), \text{ and } \beta=q(\tau). $ By Lemma \ref{maxiso}, $ \sigma,\tau \in \C $ are maximal codewords. This implies by hypothesis $\lambda= \sigma^{}\cap \tau \in \C. $ Let $\sigma=\sigma_1\cdots\sigma_n$, $\tau=\tau_1\cdots\tau_n$ and $\lambda=\lambda_1\cdots\lambda_n$ be their binary representation, respectively.  Then $ \alpha= \sigma_{1}^{}\sigma_{2}^{}\cdots\sigma_{n-1}^{}$ and $ \beta= \tau_{1}^{}\tau_{2}^{}\cdots\tau_{n-1}^{}$.    It is clear that $ \alpha^{} \cap \beta^{}= \lambda_{1}^{}\lambda_{2}^{}\cdots\lambda_{n-1}^{}=q(\lambda^{}) \in \D.$\newline
	\textit{\textbf{Projection:}} We just extend the idea from  deleting a trivial or duplicate neuron in view of Lemma $ \ref{maxpro}. $ That is if $ \alpha $ and $ \beta^{} $ are maximal codewords in $ \D $ there exist maximal codewords $ \sigma^{},\tau^{} \in \C $ such that $ q(\sigma)=\alpha^{} $ and $ q(\tau)=\beta. $ Rest follows.
	
	Hence the proof.
\end{proof}
\begin{remark}
	The converse of Theorem \ref{thmic} need not be true. For example consider the codes $ \C=\{100,010,001\} $ and $ \D=\{00,10,01\}. $ Consider the projection map $ q:\C \rar \D, 100\mapsto 10, 010 \mapsto 01 \text{ and } 001 \mapsto 00.  $ The map $ q $ satisfies the hypothesis of the converse. But $ \C $ is not max intersection-complete. This led us to think that converse will hold when the code map corresponds to a neural ring isomorphism.  That is in fact true and hence we have the following corollary.
\end{remark}
\begin{corollary}
	Let $ q:\C\rar \D $ be a code map corresponding to a neural ring isomorphism. Then $ \C $ is max intersection-complete if and only if $ \D $ is max intersection-complete.
\end{corollary}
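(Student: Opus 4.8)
The plan is to realize both implications as instances of Theorem \ref{thmic}, applying it once to $q$ and once to its inverse. The only genuine input needed is that a neural ring isomorphism is symmetric in the relevant sense: both $\phi$ and $\phi^{-1}$ are neural ring homomorphisms whose associated code maps are bijective, and hence surjective.

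First I would record that since $q$ corresponds to a neural ring isomorphism, $q$ is a bijection and in particular surjective; this is the same fact already used in the proof of Lemma \ref{maxiso} via \cite[Proposition 2.3]{curto2020neural}. The forward direction is then immediate: if $\C$ is max-intersection complete, applying Theorem \ref{thmic} to the surjective code map $q:\C\rar\D$ yields that $\D$ is max-intersection complete.

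For the converse, the key step is to exhibit a surjective code map in the reverse direction that also corresponds to a neural ring homomorphism. By the definition of a neural ring isomorphism, the inverse ring homomorphism $\phi^{-1}:\ring{\C}\rar\ring{\D}$ is again a neural ring homomorphism, and its associated code map is precisely $q^{-1}:\D\rar\C$. Equivalently, by Theorem \ref{thmnycc} the map $q$ is a composition of elementary maps of types $(1)$--$(3)$, each of which has an inverse of the same type, so $q^{-1}$ is again such a composition and therefore corresponds to a neural ring homomorphism. Since $q^{-1}$ is a bijection it is surjective, so if $\D$ is max-intersection complete, Theorem \ref{thmic} applied to $q^{-1}$ gives that $\C$ is max-intersection complete.

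The argument is thus essentially bookkeeping, and the only point requiring care is the identification of the inverse code map $q^{-1}$ with the code map associated to $\phi^{-1}$; I expect this to be the main (though minor) obstacle. It is resolved either by invoking the contravariant correspondence between code maps and ring homomorphisms, or more concretely by the observation, via Theorem \ref{thmnycc}, that the class of maps $(1)$--$(3)$ is closed under taking inverses.
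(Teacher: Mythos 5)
Your proposal is correct and is essentially the argument the paper intends: the corollary is stated there without an explicit proof, and the expected justification is precisely your two applications of Theorem \ref{thmic}, to $q$ for the forward direction and to $q^{-1}$ for the converse, using that a neural ring isomorphism has (by definition, or via closure of the elementary maps $(1)$--$(3)$ of Theorem \ref{thmnycc} under inverses) an inverse that is again a neural ring homomorphism with associated code map $q^{-1}$, which is bijective and hence surjective. Your care about identifying $q^{-1}$ with the code map of $\phi^{-1}$ is well placed but resolves exactly as you say, via the contravariant correspondence between code maps and ring homomorphisms.
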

\no The proof for the sufficient condition of the corollary is exactly the proof of Theorem \ref{thmic}. Further as $q$ corresponds to a neural ring isomorphism, $q$ is bijective and $q^{-1}:\D\to \C$ also corresponds to a neural ring isomorphism. So the proof of the necessary condition of the corollary comes by considering the map $q^{-1}$ instead of $q$ in Theorem \ref{thmic}. 
 
In the next section, we ask interesting questions like counting the number of possible neural ring endomorphisms for some specific class of neural rings. Thus the next section is going to be more in an algebraic and combinatorial direction. 
	
\section{Counting Neural ring endomorphisms}
  Denote $ \nrh{\C} $ to be the collection of all neural ring endomorphisms on $\ring{\C}$. Our first natural question is the structure of $ \nrh{\C} $. We observe that $ \nrh{\C} $ has a monoid structure with binary operation as the usual function composition.   The second natural question we ask is the cardinality of $ \nrh{\C} $ for a given code $\C$ on $n$ neurons. The motivation for this question has simply been to study the object $\nrh{\C}$ for a given code $\C$.  So, this section is devoted to finding the cardinality of $\nrh{\C}$ for a specific class of codes $\C$. Our specific class of interest is ``\textit{circulant codes}'' (refer to Section \ref{def}). This class is, in fact, a subclass of periodic codes introduced by Brown and Curto \cite{brown2022periodic}.  Calculating the cardinality of the entire class may be a larger and a difficult question; instead we will work on a smaller subclass ``\textit{circulant codes of support $p$}'' (refer to Section \ref{def}). We show that it is enough to work with this subclass to be able to give the answers for the larger question. To establish this sufficiency condition, we have the following observation.


\begin{obs} \label{obsnrh}
 Let $ \C' $ be a code \label{obsnrhpt} obtained from $ \C $ after applying any of the elementary code maps (1) to (3)  of Theorem \ref{thmnycc}. We observe that there is a one-one correspondence between $ \nrh{\C} $ and $ \nrh{\C'} $.  	Let $ q: \C\rar \C'$ be any of the elementary code maps (1) to (3) of Theorem \ref{thmnycc}. Then by Theorem \ref{thmnycc} we have that the corresponding neural ring homomorphism, $ \alpha_q: \ring{\C'}\to \ring{\C} $ is in fact a neural ring isomorphism. Define the correspondence as the conjugation by $\alpha_q^{-1}$, i.e.,
		\begin{align*}
			\Phi: \nrh{\C}& \rar \nrh{\C'}\\
			\phi& \mapsto \alpha_q^{-1}\circ \phi \circ \alpha_q.
		\end{align*} 
		The image of a map $ \phi \in\nrh{\C} $ under the map $ \Phi $ is defined by compositions of neural ring endomorphisms and is thus again a neural ring endomorphism. The map $ \Phi $ is a bijection with its inverse being conjugation by $ \alpha_q $. Therefore we have $ \vert \nrh{\C} \vert = \vert \nrh{\C'} \vert.$ Moreover, $ \Phi $ is a monoid isomorphism since it preserves composition and identity.

\end{obs}
\no We will save this observation for section \ref{def} specifically remark \ref{remcccp}.
\subsection{Classification of ring endomorphisms on neural codes} 
Let $ \C =\{c_1^{},c_2^{},\dots,c_m^{}\} $ be a code on $ n $ neurons and   $ c_i^{}=c_{i1}^{}c_{i2}^{}\cdots c_{in}^{} $ be the binary representation of  $ c_i, $ where $ c_{ij}^{}\in\{0,1\}. $
As discussed in the introduction, Curto et al. \cite{curto2013neural} defined the neural ring associated to a code $\C$, as $ \ring{\C}= \mathbb{F}_2[x_1,x_2,\dots,x_n]/I_\C $ where  $ I_\C=\{f\in\mathbb{F}_2[x_1,x_2,\dots,x_n] \mid f(c)=0 \text{ for all } c\in \C\}. $ The elements of $\ring{\C}$ can be expressed as polynomials, with the understanding that a polynomial is a representative of its equivalence class mod $I_\C$.  Furthermore, there is a ring isomorphism between $\ring{\C}$ and ring of functions from $\C$ to $ \mathbb{F}_2$. Note that the ring of functions from $\C $ to $ \mathbb{F}_2$ is also a vector space over $\mathbb{F}_2$. Thus  a canonical vector space structure is induced on $\ring{\C}$. The elements of the ring $\ring{\C}$ can thus be seen as functions from $\C $ to $ \mathbb{F}_2.$ For all $i\in[m],\ \rho_{c_i}:\C\to \mathbb{F}_2$ denotes the characteristic function given by, $$\rho_{c_i}(v)= \begin{cases}
	1 & \text{ if } v=c_i \\ 0 & \text{ otherwise,}
\end{cases}\quad \text{ for  any } v\in\C. $$ In polynomial notation, $$\rho_{c_i}=\prod_{c_{ij}=1}x_j\prod_{c_{ik}=o}(1-x_k).$$ Further, throughout this section we write $ \rho_{c_i} $ as just $ \rho_i $. Moreover, the set of characteristic functions $\{\rho_i\mid i\in[m]\}$ form a basis of the vector space $\ring{\C}$ over $\mathbb{F}_2.$ Therefore, $\ring{\C}$ is a $m-$ dimensional vector space over $\mathbb{F}_2$. Hence $\ring{\C}$ is isomorphic to $m$ copies of $\mathbb{F}_2$ i.e, $\ring{\C}\cong\mathbb{F}_2\underbrace{\oplus\dots\oplus}_{m-\text{times}}\mathbb{F}_2$ as a vector space over $\mathbb{F}_2.$

Denote $ \rh{\C} $ to be the collection of all ring homomorphisms that preserve unity from $ \ring{\C} $ into itself. Note that $\rh{\C}$ is a semi-group with the composition of functions as the binary operation. In 1974, Maxson \cite{maxson1974endomorphism} explored the semi-group of endomorphisms of a ring. He proved that the semi-group of endomorphisms of $ \mathbb{F}_2\underbrace{\oplus\dots\oplus}_{m-\text{times}}\mathbb{F}_2  $ is the set of all the partial functions from $ [m] $ into itself and the endomorphisms which preserve unity corresponds to all the functions from $ [m] $ into itself. Observe that the cardinality of the set of all partial functions from $ [m]  $ to itself is $ (m+1)^m $ and the cardinality of the set of all functions from $ [m] $ to itself is $ m^m. $ Therefore  $ |\rh{\C}|=m^m. $

Let us now  describe an arbitrary map $ \phi \in \rh{\C}. $ The map $\phi$  is a ring homomorphism. Moreover, $\phi$ will also be a linear map. So, to understand the map $\phi$ it sufficient to know the value of $ \phi $ on basis elements $ \{\rho_i\mid i
\in [m]\}. $ Let $ \phi $ map $ \rho_i $ to $ \sum_{j=1}^m a_{ij}^{} \rho_j^{}, $ where $ a_{ij}^{}\in\mathbb{F}_2.  $  Therefore we say that $ \phi $ is determined by these vectors $ a_i\ (\phi \leftrightarrow \{a_i\}_{i\in[m]}^{}) , $ where $ a_i^{}=(a_{i1}^{},a_{i2}^{},\dots,a_{im}^{})\in\mathbb{F}_2^m. $ Since the map $ \phi $ is a ring homomorphism, it will preserve the multiplication of $ \ring{\C}$. We will now obtain conditions on vectors $ a_i, $ so that $ \phi $ preserves multiplication.
We use the following facts given in \cite{curto2020neural}: 

$$(1)\ \rho_i \rho_j =\begin{cases}
	0 \ \ &\text{ if }  i \not =j \\
	\rho_i \ \ &\text{ if } i=j,
\end{cases}\qquad \qquad (2)\ \sum_{i=1}^{m}\rho_i=\unit.$$

\no We fix the notation $ |a_i| $ for  the number of one's occurring in $ a_i^{} $.

\begin{remark} \label{obsrh}
	In this remark we will derive some conditions on the vectors $ a_i $  defined above.
	\begin{enumerate}
		\item $ \phi(\rho_i)\phi(\rho_j) =\sum_{l=1}^m a_{il}^{} \rho_l^{} \sum_{k=1}^m a_{jk}^{} \rho_k^{}=\sum_{r=1}^m b_{ijr}^{} \rho_r^{},$
		where $ b_{ijr}^{}=a_{ir} ^{}a_{jr}^{}.$ 
		\item When $ i\not=j\in [m] $ we have $\phi(\rho_i)\phi(\rho_j)=\phi(\rho_i\rho_j)=\phi(0)=0 .$ Therefore $ \sum_{k=1}^{m} b_{ijk} \rho_k=0$. So, $ b_{ijk}=0 $ for all $ k, $ whenever $i\not=j$.  
		\item Suppose for  some $ i,k\in[m] $ let $ a_{ik}=1. $ Then for all $ j\not=i\in [m],  $  we have from observation (2)  $ 0=b_{ijk}=a_{ik}a_{jk} $. This gives $ a_{jk}=0 $.  This means for a  given  coordinate $ k\in[m] $,  we have at most one vector $ a_i $ such that $ a_{ik}=1. $ So, the number of ones in all $ a_i $'s together is at most $ m. $ Therefore \label{reml} $ \sum_{i=1}^m |a_i|\leq m. $
		\item 
		We know that $ \sum_{i=1}^{m} \rho_i=\unit = \phi(\unit)= \phi\left(\sum_{i=1}^{m} \rho_i\right)= \sum_{i=1}^{m}\phi(\rho_i)= \sum_{i=1}^{m}\sum_{j=1}^{m}a_{ij}^{}\rho_j^{}=\sum_{j=1}^{m}\sum_{i=1}^{m}a_{ij}^{}\rho_j^{} = \sum_{i=1}^m a_{i1}^{}\rho_1^{}+\sum_{i=1}^m a_{i2}^{}\rho_2^{}+\dots+\sum_{i=1}^m a_{im}^{}\rho_m^{}. $ Comparing coefficients on both sides we get  $ \sum_{i=1}^{m} a_{ik}^{}=1 $ for all $ k\in[m]. $  This means for a  given  coordinate $ k\in[m] $,  we have at least one vector $ a_i $ such that $ a_{ik}=1. $  So, the number of ones in all $ a_i $'s together is at least $ m. $ Therefore $ \sum_{i=1}^m |a_i|\geq m. $ This and observation (\ref{reml}) gives us  $ \sum_{i=1}^{m}|a_i|= m. $ \label{obspoint}
		\item   If there is a vector $ a_i $ with $ |a_i|=r. $ Then   observation (\ref{obspoint}) guarantees that there will be at least $ r-1 $ number of $ j $'s such that $ a_j $ is a zero vector. Furthermore, if we assume that there exists an $ i\in [m] $ such that $ |a_i|=m,$ i.e.,  $ a_i $ is an all ones vector, then for all $ j\not=i $ we have $ a_j $ is a zero vector.
		
	\end{enumerate}	
\end{remark}
\no We will now define three different classes of maps in $ \rh{\C}. $
\begin{definition}
	\begin{enumerate}
		\item \textbf{Basis permutation maps (BPM)}: We call an element $ \phi\in \rh{\C} $ a \textit{basis permutation map} if for all $ i\in[m],\ |a_i|=1 $.  There are $ m! $ number of such maps. We will denote $ \bpm{\C} $ as the set of all basis permutations maps from $ \ring{\C} $ into itself.
		\item \textbf{Unity maps (UM)}: We call an element $ \phi\in \rh{\C} $ a \textit{unity map} if there exists $ i\in [m] $ such that $ |a_i|=m $. From Remark \ref{obsrh} all the other vectors determining $ \phi $ will then be zero vectors. Therefore there are exactly $ m $ such maps. We will denote $ \um{\C} $ as the set of all unity maps from $ \ring{\C} $ into itself.
		\item \textbf{Non-BPM and non-UM}: These are the maps in $ \rh{\C}$ other than basis permutations and unity maps. So, cardinality of the set containing non-BPM and non-UM  is then equal to $ m^m-m!-m. $ Let $ \psi $ be a map in this class. As $ \psi $ is not a BPM there exists at least one $ i\in [m] $ such that $ |a_i|\geq 2. $ Therefore at least one other vector $ a_j $ associated to $\psi$ must be a zero vector. So, we refer to this class as non unity maps with at least one $ a_j =0.$
	\end{enumerate}
\end{definition}
Note that when $m=1,$ the cardinality of $\rh{\C}$ is exactly 1, which is the identity map.  Clearly by definitions, this identity map will be both BPM and unity map. Also, note that in this case there are no non-BPM and non-UM. When $m>1,$ it is evident from the definition of these three classes of maps that they form a classification of $\rh{\C}$. 
\begin{eg}
	Let $ \C $ be a code on $ n $ neurons with $ |\C|=3. $ We know that $ \{\rho_1^{},\rho_2^{},\rho_3^{}\} $ generates $ \ring{\C}. $ We give examples of three different ring endomorphisms one from each class on $ \ring{\C}. $
	\vspace{-0.15cm}
	\begin{enumerate}
		\item  Let $ a_1=(0,1,0),\ a_2=(0,0,1) $ and $ a_3=(1,0,0). $ The map $ \phi $ given by $ \{a_i\}_{i\in[3]} $ is a basis permutation map. Moreover, $\phi $ maps basis as follows: $ \rho_1\mapsto\rho_2,\ \rho_2\mapsto\rho_3,\ \rho_3\mapsto \rho_1. $
		\item Let $ a_1=(0,0,0),\ a_2=(1,1,1) $ and $ a_3=(0,0,0). $  The map $ \phi $ given by $ \{a_i\}_{i\in[3]} $ is a unity map. Moreover, $\phi $ maps basis as follows: $ \rho_1\mapsto 0,\ \rho_2\mapsto\rho_1+\rho_2+\rho_3= \unit,\ \rho_3\mapsto 0. $
		\item Let $ a_1=(1,0,1),\ a_2=(0,0,0) $ and $ a_3=(0,1,0). $ The map $ \phi $ given by $ \{a_i\}_{i\in[3]} $ is a non-BPM and non-UM . Moreover, $\phi $ maps basis as follows: $ \rho_1\mapsto \rho_1+\rho_3,\ \rho_2\mapsto 0,\  \rho_3\mapsto \rho_2. $
	\end{enumerate}
\end{eg}
\begin{remark}
	Let $ \phi\in\rh{\C} $ be a unity map. Recall that  $ x_j=\sum_{c_{ij}=1} \rho_i $. Then $ \phi(x_j)\in \{0,1\} $ for all $ j\in  [n].$ This is because $ \phi{(\rho_j)}\in \{0,1\} $ for all $j\in[m]$. Therefore irrespective of the code, all  \textit{unity maps} are neural ring endomorphisms. In particular, $ |\nrh{\C}| \geq m. $
\end{remark}
In the following subsection, we will restrict ourselves to codes on  $ n $ neurons with cardinality $m=n$. The rationale for this restriction is our focus on a specific class called `circulant codes with support $p$'. 
\subsection{Circulant codes}
Let $ \D=\{d_1^{},d_{2}^{},\dots,d_n^{}\} $ be any code on $ n $ neurons. For all $ i\in[n] $ let  $d_i= d_{i1}d_{i2}\cdots d_{in} $ be the binary representation of $ d_i. $ The correspondent matrix of the code $ \D $ is defined as an $ n\times n $ matrix with entries $ d_{ij}^{}. $ A circulant matrix of order $ n $ is a square matrix which has a property that each row is same as its previous row, just shifted to the right by one element, and the last element gets shifted to the first position.
 Any circulant matrix $A$ has the following general form:
 $$A=\begin{pmatrix}a_1&a_2&\cdots&a_n
 	\\a_n&a_1&\cdots&a_{n-1}\\ \vdots&\vdots&\ddots&\vdots\\a_2&a_3&\cdots &a_1\end{pmatrix}.$$
So we can observe from the general form above that one row is enough to determine the entire circulant matrix.

 Consider the codeword  $ c_1^{}=10\cdots0, $  i.e., $ 1  $ followed by $ n-1 $ zeros. Shift $ 1 $ to the right to generate the next codeword. Iterate this process and get  the remaining $ n-2 $ codewords.  In other words $ c_i $ will be a codeword containing $ 1 $ in $ i^{\text{th}} $ place and $ 0  $ elsewhere. Let $ \C=\{c_i\}_{i=1}^n $ be the code with codewords obtained as above. The correspondent matrix of the code $ \C $ is a circulant matrix. Next, consider $ c'_1=1100\cdots0 $ and similarly obtain a code $ \C' $ using the above process.  The correspondent matrix of the code $ \C' $   is also circulant. We give a generalized definition of such codes.

\begin{definition}[Circulant code] \label{def}
	A code $  \C $ on $ n $ neurons is called \textit{circulant code} if the correspondent matrix of the code $ \C $ is circulant.  
\end{definition}
\no Note that the definition automatically gives that $|\C|=n$. 
\begin{definition}[Circulant code with support $ p$]
	A code $ \C=\{c_1^{},c_2^{},\dots,c_n^{}\} $  on $n$ neurons is called \textit{circulant code with support $ p\ (1\leq p < n) $}  if $ \C $ is a circulant code and  $ c_p=11\cdots10\cdots0 $ with $ p $ consecutive ones followed by $n-p$ zeros.  \end{definition}	

\begin{remark}
Note that for a circulant code with support $ p $,  all the codewords $ c_i $ have $ |\operatorname{supp}(c_i)|=p $, where $ \operatorname{supp}(c_i) $ is the support of the codeword $ c_i=c_{i1}^{}c_{i2}^{}\dots c_{in}^{} $ which is the set  $\left\{j\in \left[n\right] \vert\ c_{ij}^{}=1\right\} $. Observe that a circulant code with support $ p $ is always a circulant code but not the other way around. For example, consider the code $ \C=\{101,110,011\} $ and $ \C'=\{110,011,101\} $ with elements reordered. Then $ \C $ is a circulant code on $ n=3  $ neurons with support $ p=2 $ whereas, $ \C' $ is no more a circulant code with support $ p=2 $.	Also, note that we do not consider $ p=n$ as in that case $ \C=\{11\cdots11\} $ is a code with cardinality $ 1. $ Furthermore, we are interested only in the codes on $ n $ neurons with cardinality $ n. $ An important point to note here is that we have fixed the order of the elements in a circulant code  with support $ p $. 

\end{remark}
\begin{eg}
	The following are few examples of \textit{circulant codes with support $ p $.} 
	\begin{enumerate}
		\item The code $ \{100,010,001\} $ is a circulant code with support $ p=1 $ on  $ n=3 $ neurons.
		\item The code $ \{1001,1100,0110,0011\} $ is a circulant code with support $ p=2 $ on  $ n=4 $ neurons.
	\end{enumerate}
\end{eg}
\begin{remark} \label{remcccp}
As mentioned in the beginning of this section, our aim is to investigate $ \nrh{\D} $ and give its cardinality for \textit{circulant codes}. To count $ \nrh{\D} $ for a circulant code $ \D $, we first convert the given code to a circulant code with some support $ p$ and label it as $ \C. $ We can do this via a permutation map. Moreover, using  Observation \ref{obsnrh} we get $ |\nrh{\D}|=|\nrh{\C}|. $ So, it is enough to work with circulant code with support  $ p. $ 
\end{remark}

A given map $ \phi\in \rh{\C} $ belongs to $ \nrh{\C} $ if for all $ i\in[n], \phi(x_i)\in \{x_i|i\in [n]\} \cup \{0,1\}.$
 So we need to understand what $ x_i $'s are in the circulant codes with support $ p $. First, we note that the number of $\rho_j$'s in the expression of $ x_i $ comes from the number of 1's in $ i^{\text{th}} $ column of the correspondent matrix of the code.  For a circulant code, the correspondent matrix is a circulant matrix. Also, in a circulant matrix, the row sum and column sum for all rows and columns is a constant. Therefore we get that in a circulant code of support $ p $,  the number of terms in $ x_i $ is the same for all $ i\in [n]. $  In fact each $ x_i $ will be a sum of $ p $ terms. Furthermore, we can see that $ x_i $ will have $ p $ consecutive terms taken circularly. For example, we observe that when $ n=6,p=4 $ we get $ x_5=\rho_5+\rho_6+\rho_1+\rho_2. $ We will now introduce a notation to write this rigorously. Given any $m\in \mathbb{Z}$ define $\mdsum{m}$ as $$ \mdsum{m}
^{}  = \begin{cases}
	n & \text{ if } m= kn, \text{ where } k\in \mathbb{Z}\\
	m\bmod n & \text{ otherwise. } 
\end{cases}$$ Note that $ \mdsum{m}=n $ when $ m $ is a multiple of $ n $ and it is $ m\bmod n $ otherwise.

\no Therefore, for $i\in[n]$ we write $$ x_i=\sum_{k=0}^{p-1} \rho_{\mdsum{i+k}}^{}. $$   Moreover, $ \rho_{i}$ and $ \rho_{\mdsum{i+p-1}} $ are respectively the first and last (or $ p^{\text{th}} $) term in the expression of $ x_i $.

Let $ \C $ be a circulant code with support $ p $ on $ n $ neurons. In the remaining part of this section we will count the number of neural ring endomorphisms of $ \C $ for $ p\in \{1,2,3,n-1\}. $ We are still working on remaining cases for $3< p<n-1 $. We have proposed conjectures for $ 3< p<n-1$ towards the end of this section. Figure \ref{fig4} summarizes this section. The upcoming result talks about the count of $ \nrh{\C} $ for circulant codes with support $ p=1 $ and $ p=n-1. $

\begin{proposition}
	If $ \C $ is a circulant code with support $ p=1 $ or $ n-1, $ then $ |\nrh{\C}|=n!+n. $ \label{propcirnrh} 
\end{proposition}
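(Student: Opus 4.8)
The plan is to reduce the defining condition of a neural ring endomorphism to a condition on the Hamming weights $|a_i|$ alone, using that for $p=1$ and $p=n-1$ the polynomials $x_i$ take an especially simple shape. Recall from Remark~\ref{obsrh} that a map $\phi\in\rh{\C}$, encoded by $\phi\leftrightarrow\{a_i\}_{i\in[n]}$ with $\phi(\rho_i)=a_i\cdot P=\sum_j a_{ij}\rho_j$, satisfies $\sum_i|a_i|=n$ with each coordinate covered exactly once. Since $\{\rho_j\}$ is a basis, $a_i\cdot P$ equals a single $\rho_\ell$ precisely when $|a_i|=1$, equals $0$ precisely when $|a_i|=0$, equals $\unit$ precisely when $|a_i|=n$, and lies outside $\{0,\unit\}\cup\{\rho_j\mid j\in[n]\}$ whenever $2\le|a_i|\le n-1$.

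First I would compute the variables $x_i$ in the two cases. For $p=1$ the correspondent matrix is a permutation matrix and the formula $x_i=\sum_{k=0}^{p-1}\rho_{\mdsum{i+k}}$ collapses to $x_i=\rho_i$. For $p=n-1$ the same formula gives a sum of $n-1$ of the $\rho_j$'s, the unique missing index being $\mdsum{i-1}$; hence in characteristic $2$ we have $x_i=\unit+\rho_{\mdsum{i-1}}$.

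The key step is then the reduction of the neural condition $\phi(x_j)\in\{x_i\mid i\in[n]\}\cup\{0,\unit\}$ for all $j$. When $p=1$ this is literally $a_j\cdot P\in\{\rho_i\}\cup\{0,\unit\}$. When $p=n-1$, using $\phi(\unit)=\unit$ we get $\phi(x_j)=\unit+a_{\mdsum{j-1}}\cdot P$; since $v\mapsto\unit+v$ is an involution carrying $\{x_i\mid i\in[n]\}\cup\{0,\unit\}$ bijectively onto $\{\rho_i\mid i\in[n]\}\cup\{0,\unit\}$, the condition is again equivalent to $a_m\cdot P\in\{\rho_i\}\cup\{0,\unit\}$ for all $m$ (after reindexing $m=\mdsum{j-1}$). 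By the weight dichotomy above, in either case $\phi\in\nrh{\C}$ if and only if $|a_m|\in\{0,1,n\}$ for every $m$.

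Finally I would combine this with $\sum_m|a_m|=n$. If some $|a_m|=n$ then all remaining vectors vanish, so $\phi$ is a unity map; otherwise every $|a_m|\le1$ and $\sum_m|a_m|=n$ forces all $|a_m|=1$, so $\phi$ is a basis permutation map. For $n\ge2$ (which holds since $p<n$) these two families are disjoint, and there are $n$ unity maps and $n!$ basis permutation maps, giving $|\nrh{\C}|=n!+n$. The one place needing care is the $p=n-1$ reduction: correctly identifying $x_i=\unit+\rho_{\mdsum{i-1}}$ and verifying that translation by $\unit$ permutes the target set $\{x_i\}\cup\{0,\unit\}$ onto $\{\rho_i\}\cup\{0,\unit\}$ is exactly what makes both cases collapse to the single weight condition; the rest is the linear-algebra bookkeeping and counting already available from Remark~\ref{obsrh}.
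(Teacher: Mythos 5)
Your proposal is correct, and for the case $p=n-1$ it takes a genuinely different and slicker route than the paper. The paper's own proof handles $p=1$ exactly as you do (with $x_i=\rho_i$, a non-BPM non-UM map has some $2\le|a_i|\le n-1$, so $\phi(x_i)$ has the wrong number of terms), but for $p=n-1$ it argues combinatorially: it notes that the $n$ codes $x_i$ exhaust all $\binom{n}{n-1}=n$ sums of $n-1$ distinct basis elements, so a BPM sends each $x_i$ to some $x_j$; and for a non-BPM non-UM map with $|a_i|=k$, $2\le k\le n-1$, it evaluates $\phi$ on the particular $x_j$ whose support is $[n]\setminus\{i\}$ and counts terms (since the supports of the $a_m$'s are disjoint and cover $[n]$, that image has $n-k\notin\{0,n-1,n\}$ terms), reaching a contradiction. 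Your characteristic-$2$ rewriting $x_i=\unit+\rho_{\mdsum{i-1}}$ together with the involution $v\mapsto\unit+v$ (which carries $\{x_i\mid i\in[n]\}\cup\{0,\unit\}$ bijectively onto $\{\rho_i\mid i\in[n]\}\cup\{0,\unit\}$, using $\phi(\unit)=\unit$) collapses both cases to the single criterion $|a_m|\in\{0,1,n\}$ for all $m$, which combined with $\sum_m|a_m|=n$ immediately yields the dichotomy $\phi\in\bpm{\C}\cup\um{\C}$ and the count $n!+n$. What your approach buys is a unified, necessary-and-sufficient weight condition proved in one stroke, with sufficiency of the BPMs falling out for free rather than via the subset-exhaustion argument; what the paper's approach buys is a counting template ($\norm{\phi(x_i)}$ must lie in $\{0,p,n\}$) that it can and does generalize to supports $1<p<n-1$ in the later theorems, where the complementation trick is unavailable because $x_i$ is no longer $\unit$ minus a single basis element. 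One small point of care in your write-up is already flagged correctly by you: the equivalence for $p=n-1$ needs $\phi$ unital, which holds by the paper's standing convention on ring homomorphisms, and the disjointness of the two families needs $n\ge2$, which is automatic since $p<n$ forces $n\ge2$.
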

\begin{proof}
	\textbf{Case 1: }{$ p=1 $}\\ When $p=1 $ we have $ x_i=\rho_i $ for all $ i. $ Let $ \phi\in\bpm{\C}. $ Then given any $ \rho_i $ there exists some $ \rho_j  $ such that $ \phi(\rho_i)=\rho_j $ for $ i,j\in[n]. $ Therefore for all $ i\in[n] $ we have $ \phi(x_i)=\phi(\rho_i)=\rho_j=x_j $ for some $ j\in [n]. $ This implies that  $\phi\in \nrh{\C}. $ Moreover,  we already know that $ \um{\C}\subseteq \nrh{\C}  $ for any code $ \C. $ So, we have $ \bpm{\C} \cup \um{\C}\subseteq \nrh{\C} .$  It is left to show that given any non-BPM and non-UM, it is not in $ \nrh{\C}. $ Let $ \psi $ be a non-BPM and non-UM with $ \{a_i\}_{i\in[n]} $ as its representing vectors. We know that  there exists $ j\in [n] $ such that $ |a_j|=k $, where $ 2\leq k \leq n-1.$ Consider, $$ \psi(x_j)=\psi(\rho_j)=\sum_{l=1}^{n}a_{jl}^{}\rho_l=\sum_{a_{jl}^{}=1}\rho_l. $$ As $ |a_j^{}|=k $ we see that $ \psi(x_j) $ has $ k $ terms in its expression.   Since , $ 2\leq k \leq n-1$ we have $ \psi(x_j)\not \in \{x_i|i\in [n]\} \cup \{0,1\} $ . Therefore $ \psi\not \in \nrh{\C}. $ Hence  $ \bpm{\C} \cup \um{\C} =\nrh{\C} $ and the result follows.\\
	\textbf{Case 2:} $ p=n-1 $\\ When $ p=n-1 $ for $ i\in[n] $ we get $x_i=\sum_{k=0}^{n-2} \rho_{\mdsum{i+ k}} ^{}. $ Firstly observe that if $ \phi\in \bpm{\C} $ then $ \phi(x_i) $ will also have exactly $ n-1  $ terms. Secondly, the combination of $ n $ number of $ \rho_i $'s taken $ n-1 $ at a time without repetition is $ \binom{n}{n-1}=n $ choices. Further all these $ n $ choices are included in $ x_i $'s as they are exactly $ n $ distinct of them. Therefore there exists $ j\in [n] $ such that after rearrangement of terms in the expression of $ \phi(x_i) $ we get $ \phi(x_i)=x_j. $ This implies $ \phi\in\nrh{\C}. $ Hence we have $ \bpm{\C} \cup \um{\C}\subseteq \nrh{\C}. $ It is once again left to show that $ \psi, $ a non-BPM and non-UM is not in $ \nrh{\C}. $ Let $ \{a_i\}_{i\in[n]} $ be the vectors that represent $ \psi.	 $  As we noticed in Case 1, there exists $ j \in [n]$ such that $ |a_j|=k\  (2
	\leq k\leq n-1). $  Assume that there are $ r $ vectors $ \{a_{r_1},a_{r_2}\dots, a_{r_r}\} $ which take the other $ n-k $ ones, i.e., $ \sum_{i=1}^{r} |a_{r_i}^{}|=n-k$. So remaining $ n-r-1 $ vectors, say $ \{a_{t_1},a_{t_2},\dots, a_{t_{n-r-1}}\} $ are zero. From Remark \ref{obsrh} we get $ n-r-1\geq 1, $ and this implies that $ r<n-1. $ As we have mentioned earlier that all the term combinations are present in $ x_i. $ This implies there exists $ j\in [n] $ such that $ x_j= \rho_{r_1}+\rho_{r_2}+\dots+\rho_{r_r}+\rho_{t_1}+\rho_{t_2}\dots+\rho_{t_{n-r-1}}$. This implies $ \psi(x_j) $ will have $ r $ terms in its expression. As  $ 0<r < n-1 $ we have $ \psi(x_j)\not\in \{x_i\mid i\in [n]\} \cup \{0,1\}$. Therefore $ \psi \not\in \nrh{\C}. $ Hence  $ \bpm{\C} \cup \um{\C} =\nrh{\C} $ and the result follows. 
	\end{proof}
	\begin{figure}[]
		\begin{tikzpicture}[scale=5.1] 
			\draw[black] (1.2,0.79) rectangle (1.59,0.96);
			\draw (1.2,0.87)node[right] {\Small$ \Big|\nrh{\C}\Big|$};
			\draw  (0,-3pt)node[above,align=left] {\Small$ \C:$ a circulant \\ \Small code, on $n $ \\ \Small neurons  with\\ \Small support $ p $};
			\draw[black] (-0.25,-0.09) rectangle (0.24,0.3);
			\draw[black] (0.46,0.81) rectangle (0.79,0.90);
			\draw (0.63,0.9)node[below] {\Small support =$ p$};
			\draw[->,  thick, black] (0.24,0.09) -- (0.4,0.09);
			\draw[-,  thick, black] (0.4,0.7) -- (0.4,-0.8);
			\draw[->,  thick, black] (0.4,0.7) -- (0.6,0.7);
			\draw (0.6,0.7)node[right] {\Small$ 1$};
			\draw[->,  thick, black] (0.4,0.5) -- (0.6,0.5);
			\draw (0.6,0.5)node[right] {\Small$ 2$};
			\draw[->,  thick, black] (0.4,0.15) -- (0.6,0.15);
			\draw (0.6,0.15)node[right] {\Small$ 3$};
			\draw (0.61,-0.03)node[right] {$ \vdots$};
			\draw (0.61,-0.55)node[right] {$ \vdots$};
			\draw[->,  thick, black] (0.4,-0.30) -- (0.6,-0.30);
			\draw(0.6,-0.30)node[right]{\Small$ p$};
			\draw[->,  thick, black] (0.4,-0.8) -- (0.6,-0.8);
			\draw (0.6,-0.8)node[right] {\Small$ n-1$};
			
			\draw[->,  thick, black] (0.68,0.7) -- (1.18,0.7);
			\draw (1.23,0.7)node[right] {\Small$ n!+n$.};
			\draw[->,  thick, black] (0.68,0.5) -- (1.18,0.5);
			\draw (1.18,0.5)node[right] {\Small$ \begin{cases}
					3n  \ \ &n \text{ is odd and } n>1 \\36 \ \  & n=4.\\
					3n+2^2\left(\frac{n}{2}\right)!    \ \ & n \text{ is even and } n>4. 
				\end{cases}$};
			\draw[->,  thick, black] (0.68,0.15) -- (1.18,0.15);
			\draw (1.18,0.15)node[right] {\Small$\begin{cases}
				3n&   n=3d+1,\  d>1\\3n &  n=3d+2,\  d>0 \\ 270 &  n=6\\ 				15n+3^2\left( \frac{n}{3} \right)! &  n=3d ,\ d>2.
				\end{cases} $};
			\draw[-,  thick, black] (0.68,-0.30) -- (0.78,-0.30);
			\draw[-,  thick, black] (0.78,-0.30) -- (0.78,-0.19);
			\draw[->,  thick, black] (0.78,-0.19) -- (1.18,-0.19);
			\draw (0.78,-0.15)node[right]{\tiny $\GCD(p,n)=1$};
			\draw (1.18,-0.19)node[right] {\Small$ \begin{cases}
					3n \qquad \quad \quad \ \ \ \ &  n=pd+1 \\ 3n\qquad \quad \quad \ \ \ \ & n=pd+2 \\
					3n\qquad \quad \quad \ \ \ \ &  n=pd+r,\  r>2.  \\
				\end{cases}$};
			\draw[-,  thick, black] (0.78,-0.30) -- (0.78,-0.45);
			\draw[->,  thick, black] (0.78,-0.45) -- (1.18,-0.45);
			\draw (0.9,-0.4)node[right]{\Small $p | n$};
			\draw (1.23,-0.45)node[right]{$(p^2+p+3)n+p^2\left(\frac{n}{p}\right)!$};
			\draw[->,  thick, black] (0.77,-0.8) -- (1.18,-0.8);
			\draw (1.23,-0.8)node[right] {\small$ n!+n.$};
			\draw (2.25,-0.45)node[right] {\text{\Small Conjecture \ref{conj2}}};
			\draw (2.25,-0.28)node[right] {\text{\Small Conjecture \ref{conj1}}};
				\draw (2.25,-0.1)node[right] {\text{\Small Proposition \ref{propgcd1}}};
				\draw (2.25,-0.19)node[right] {\text{\Small Proposition \ref{propgcd2}}};
			\draw (2.25,-0.8)node[right] {\Small\text{Theorem \ref{propcirnrh}}};
			\draw (2.25,0.7)node[right] {\Small \text{Theorem \ref{propcirnrh}}};
			\draw (2.25,0.32)node[right] {\Small \text{Remark \ref{remnp3}}};
				\draw (2.25,0.2)node[right] {\Small \text{Remark \ref{remnp3}}};
			\draw (2.25,0.1)node[right] {\Small \text{Remark \ref{remnp3}}};
			\draw (2.25,0)node[right] {\Small \text{Theorem \ref{thnp3k}}};
		\draw (2.25,0.61)node[right] {\Small \text{Theorem \ref{thnp2}}};
		\draw (2.25,0.41)node[right] {\Small \text{Theorem \ref{thnpk2}}};
		\draw (2.25,0.51)node[right] {\Small \text{Remark \ref{rem42}}};
		\end{tikzpicture}
		\caption{The above figure represents the count of neural ring endomorphisms for a circulant code on $n $ neurons with  support $ p $, where $p\in[n-1]$.}
		\label{fig4}
	\end{figure} 
	
\begin{remark}\label{rem42}
	Consider the circulant code $ \C=\{1001,1100,0110,0011\}$ on $n=4$ neurons with support $ p=2 $. For this code we observe that $ x_1=\rho_1+\rho_2,\ x_2=\rho_2+\rho_3,\ x_3=\rho_3+\rho_4 $ and $x_4                                                                                                                                          =\rho_4+\rho_1. $ We observe that for this code there are some maps which are BPM's but not NRE's. For example, consider the map $ \phi $ given on the basis: $ \phi(\rho_1)= \rho_1, \phi(\rho_2)= \rho_3, \phi(\rho_3)= \rho_2, \phi(\rho_4)=\rho_4  $. Clearly $ \phi\in\bpm{\C} $. However, it is not a neural ring endomorphism as $ \phi(x_1)=\phi(\rho_1+\rho_2)= \rho_1+\rho_3 \notin \{x_i|i\in[4]\} \cup \{0,1\}. $  Furthermore, for this code $ \C $,  $ |\bpm{\C}|=24$. However, we only found 8 basis permutation maps that are neural ring endomorphisms. The other interesting fact is that there are some non-BPM and non-UM's which are present in $ \nrh{\C}. $ By brute force we computed that there are 24 such non-BPM and non-UM  and it gives us that $ |\nrh{\C}|=36 > 4!+4. $ Also, we  observe that the BPM's in $ \nrh{\C} $ is $ 8=2\cdot4=p\cdot n, $ for $ p=2 $ and $ n=4. $ We tried to see whether this is true for all $ n,$ and we successfully obtained the following result:  
\end{remark}
\begin{lemma}
	If $ \C $ is a circulant code with support $ p=2 $ then the total number of basis permutation maps present in $ \nrh{\C} $ is $ 2n. $ \label{prp2}
\end{lemma}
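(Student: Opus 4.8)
The plan is to recognise the basis permutation maps lying in $\nrh{\C}$ as exactly the symmetries of a cycle. First I would record that for support $p=2$ the neuron variables are $x_i=\rho_i+\rho_{\mdsum{i+1}}$, so the index set of each $x_i$ is the consecutive pair $\{i,\mdsum{i+1}\}$; these $n$ pairs are precisely the edges of the cycle graph $C_n$ on the vertex set $[n]$. A map $\phi\in\bpm{\C}$ is nothing but a permutation $\sigma\in S_n$ acting by $\phi(\rho_i)=\rho_{\sigma(i)}$, so that $\phi(x_i)=\rho_{\sigma(i)}+\rho_{\sigma(\mdsum{i+1})}$.

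Next I would translate the neural ring homomorphism condition into a combinatorial one. Since $\sigma$ is injective and $i\neq\mdsum{i+1}$ (here $n\geq 3$ because $p<n$), the element $\phi(x_i)$ is a sum of exactly two distinct basis vectors. As $0$ is the empty sum and $1=\sum_{j=1}^{n}\rho_j$ has $n\geq 3$ terms, $\phi(x_i)$ can equal neither $0$ nor $1$; and each target $x_j$ is itself a sum of exactly two basis vectors. Because the $\rho_j$ form a basis the number of terms is well defined, so $\phi\in\nrh{\C}$ if and only if for every $i$ the pair $\{\sigma(i),\sigma(\mdsum{i+1})\}$ is again consecutive, i.e.\ $\sigma$ sends every edge of $C_n$ to an edge.

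The final step is the count. A vertex bijection that maps all $n$ edges of $C_n$ into the edge set must permute them (injectivity plus equal cardinalities), hence is a graph automorphism of $C_n$; conversely every automorphism qualifies. The automorphism group of the $n$-cycle is the dihedral group of order $2n$: such a $\sigma$ is pinned down by $\sigma(1)$ (with $n$ choices) together with a traversal orientation (two choices), yielding the $n$ rotations $i\mapsto\mdsum{i+k}$ and the $n$ reflections $i\mapsto\mdsum{k-i}$. A one-line check confirms each is genuinely in $\nrh{\C}$, e.g.\ a rotation sends $x_i$ to $x_{\mdsum{i+k}}$. Therefore $|\bpm{\C}\cap\nrh{\C}|=2n$.

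The main obstacle is the bookkeeping in the middle step rather than any deep idea: one must argue carefully that, for a \emph{bijection}, preserving every consecutive pair already forces a full cycle automorphism, so that no stray permutations are counted, and that the two families of $n$ maps are distinct and together exhaust the dihedral group when $n\geq 3$. The degenerate case $n=3$, where every two-element set is consecutive and so all $3!=6=2n$ permutations lie in $\nrh{\C}$, together with the case $n=4$ already recorded in Remark \ref{rem42} (eight such maps), serve as consistency checks.
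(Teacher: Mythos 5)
Your proposal is correct, and it reaches the count by a genuinely different (more structural) route than the paper. The paper argues by sequential forcing: fix $\phi(\rho_1)=\rho_j$ ($n$ choices), observe that $\phi(x_1)\in\{x_i\}$ forces $\phi(\rho_2)\in\{\rho_{\mdsum{j+1}},\rho_{\mdsum{j-1}}\}$ (2 choices), and then show each subsequent $\phi(\rho_k)$ is uniquely determined, giving $n\times 2\times 1\times\cdots\times 1=2n$. You instead identify the consecutive pairs $\{i,\mdsum{i+1}\}$ with the edges of the cycle graph $C_n$, translate the condition $\phi(x_i)\in\{x_j\mid j\in[n]\}\cup\{0,1\}$ (the $0,1$ options being excluded by the two-term count, valid since $n\geq 3$) into edge-preservation by the permutation $\sigma$, and conclude that the admissible $\sigma$ are exactly $\operatorname{Aut}(C_n)$, the dihedral group of order $2n$. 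The two arguments encode the same combinatorial fact, but yours buys two things the paper leaves implicit: (i) a clean two-sided argument, since the automorphism-group identification simultaneously gives the upper bound (nothing but rotations and reflections) and the lower bound (every rotation and reflection genuinely lies in $\nrh{\C}$, which you verify directly), whereas the paper's choice-count never explicitly checks that all $2n$ initial choices extend consistently around the wrap-around $x_n=\rho_n+\rho_1$; and (ii) the group structure of $\bpm{\C}\cap\nrh{\C}$ for free, which the paper's count does not exhibit. Your treatment of the boundary case $n=3$ (where $C_3$ is complete, so all $3!=6=2n$ permutations qualify, consistent with the paper's $p=n-1$ formula) and the cross-check against Remark \ref{rem42} for $n=4$ are exactly the right sanity checks; the paper's proof handles these cases only implicitly.
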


\begin{proof}
	Let $ \phi\in \bpm{\C}. $ It is enough to see the restriction of $ \phi $ to basis elements to  determine the entire map. For this reason  we now start counting where $ \phi $ can map each $ \rho_i. $ We begin with $ \rho_1 .$ As $ \rho_1^{} $ can map to $ \rho_i^{} $ for any $ i\in [n] $, we get that $ \rho_1^{} $ has $ n  $ choices. Assume that $ \rho_1^{} $ is mapped to $ \rho_j^{} $ for some $ j\in[n]. $ Since $ x_1^{}=\rho_1^{}+\rho_2^{} $ and $ \phi(x_1^{})\in \{x_i^{}\mid i\in [n]\} $
	(For a map  $\phi \in \bpm{\C}$ we have that $\ \phi(x_i^{}) $ and $ x_i $ have same number of terms, leading to $ \phi(x_i^{})\not \in\{0,1\}). $ 
	Therefore $ \phi(x_1^{})=\phi(\rho_1^{}+\rho_2^{})=\rho_j+\phi(\rho_2). $ So, for $ \phi(x_1^{})\in\{x_i^{}\mid i\in[n]\}, $ we must have $ \phi(\rho_2^{}) =\rho_{\mdsum{j+1}}^{}  $ or $ \rho_{\mdsum{j-1}}^{}. $
	Therefore $ \rho_2 $ has 2 choices when $ \rho_1 $ is fixed. On fixing $ \rho_2^{} \mapsto \rho_{\mdsum{j-1}}^{} $ we similarly get two choices for $ \rho_3^{} $ i.e., $ \rho_3\mapsto \rho_j $ or $ \rho_{\mdsum{j-2}}. $ 
	But as $ \phi(\rho_1^{})= \rho_j $ we cannot have $ \phi(\rho_3^{})=\rho_j^{}. $ Also, $ \rho_3^{} $ will still have 1 choice when $ \rho_2{}\mapsto \rho_{\mdsum{j+1}}. $ Therefore $ \rho_3^{} $ has exactly one choice when $ \rho_1^{}$ and $ \rho_2^{} $ are fixed.  So, in total we will have $ 2n  $ choices. Hence the result.
\end{proof}
\begin{remark}
	So far (Propositions \ref{propcirnrh} and Lemma \ref{prp2}) we have counted the number of basis permutation maps that are neural ring endomorphisms for a circulant code with support $ p=1,2$ and  $n-1. $ We have obtained this count to be $ n!,2n $ and $n! $ respectively. We could further see that for $p=3$, the total BPM in $\nrh{\C}$ is $2n$. The pattern remains the same as $ p $ increases, which we prove in Theorem \ref{thnrhbpm}. But we first establish a lemma that we require to prove this theorem.
\end{remark}
\begin{lemma} \label{lemrho2}
	Let $ \C $ be a circulant code with support $ p\  (2 < p < n-1) $ on $ n>2 $ neurons and let $\phi\in\bpm{\C}\cap \nrh{\C}$. If $\phi(\rho_1)=\rho_j$ for some $j\in[n]$ then $\mathbf\phi(\rho_2)\in \left\{\rho_{\mdsum{{j+1}}}^{},\rho_{\mdsum{{j-1}}}^{}\right\}.$
\end{lemma}
\begin{proof}
	Suppose not. Let $ \phi(\rho_2)= \rho_{\mdsum{{j+k}}},$ where $ k\in[n]\backslash \{1,n-1\}.$ 
	Now, $$ \phi(x_1^{})=\phi(\rho_1^{}+\rho_2^{}+\dots+\rho_p^{})=\rho_j+\rho_{\mdsum{{j+k}}}^{}+\phi(\rho_3^{})+\dots+\phi(\rho_p^{}). $$ As $  \phi $ is a neural ring endomorphism $ \phi(x_1^{})\in\{x_i^{}\mid i\in[n]\} $ i.e., there exists $ l\in[n] $ such that $ \phi(x_1)=x_l^{} $. Therefore for all $ i\in [p]\backslash[2] $ we get $ \phi(\rho_i^{})=\rho_{r_i} $ such that $ \rho_{r_i} $ is present in the expression of $ x_l^{} $ and $ r_i\not=j $ or $ \mdsum{j+k}. $ Suppose $ \rho_j $ be the first term in the expression of $ x_l $ or in other words let $ x_l=x_j. $ Consider, 
	\begin{align*}
		\phi(x_2^{})=& \phi(x_{2}^{}+\rho_1^{}-\rho_1^{}) \\&=\phi(\rho_1^{}+\rho_2^{}+ \dots+\rho_p+\rho_{p+1}^{}-\rho_{1}^{})=\phi(x_1+\rho_{p+1}-\rho_1) \\
		&=x_j+\phi(\rho_{p+1}^{})-\rho_j \\& = \rho_{\mdsum{{j+1}}}^{}+\dots+ \rho_{\mdsum{j+k}}+\dots+ \rho_{\mdsum{j+(p-1)}}^{}  + \phi(\rho_{\rho_{p+1}}).
	\end{align*} As $ \phi(x_2^{}) \in\{x_i^{}\mid i\in[n]\} $ it must be a sum of some $ p $ number of consecutive  $  \rho_i $'s. This forces  $ \phi(\rho_{p+1})= \rho_j $ or $\phi(\rho_{p+1})= \rho_{\mdsum{j+p}}. $ But the former one is not possible as $ \phi(\rho_1^{})=\rho_j. $ Therefore  $ \rho_{p+1}\mapsto \rho_{\mdsum{j+p}}. $ Next, similarly calculating we get $$ \phi(x_3^{})=\rho_{\mdsum{j+1}}^{}+\dots+\rho_{\mdsum{j+k-1}}^{}+\rho_{\mdsum{j+k+1}}^{}+\dots+\rho_{\mdsum{j+p}}^{}+\phi(\rho_{p+2}). $$    For $ \phi(x_3^{}) $ to be some $ x_m $ we would require $ \phi(\rho_{p+2})=\rho_{\mdsum{j+k}}^{} $ as  $\rho_{\mdsum{j+k}}^{} $ is the missing term in the expression of $ \phi(x_3) $. But, then we would end up getting $ \phi(\rho_{p+2})=\phi(\rho_2), $  which is a contradiction. Therefore $ x_l^{}\not=x_j. $ We would get a similar contradiction even if $ \rho_j $ was the last term in the expression of $ x_l^{}. $
	Now suppose that $ \rho_j $ is in between term in the expression of $ x_l^{}, $ i.e., let $$ x_l^{}=\rho_l^{}+\dots+\rho_{j}^{}+ \rho_{\mdsum{j+1}}^{}+\dots+\rho_{\mdsum{j+k}}^{}+\dots+\rho_{\mdsum{l+p-1}}^{}. $$ Then, $$ \phi(x_2^{})=\rho_l+\dots+ \rho_{\mdsum{j+1}}^{}+\dots+\rho_{\mdsum{j+k}}+\dots+\rho_{\mdsum{l+p-1}}^{}+\phi(\rho_{p+1}^{}). $$ This implies for $ \phi(x_2^{})\in\{x_i^{}\mid i\in[n]\}, $ we need $ \phi(\rho_{p+1}^{})=\rho_j^{}. $ But this would give us $ \phi(\rho_1^{})=\phi(\rho_{p+1}^{}) $ which is a contradiction. Hence the proof.
\end{proof}
\begin{theorem} \label{thnrhbpm}
	Let $ \C $ be a circulant code with support $ p\ (1\leq p < n) $ on $ n>2 $ neurons. The total number of basis permutation maps present in $ \nrh{\C} $ is given by $ \begin{cases}
		n! &\text{ if } p=1 \text{ and } p=n-1 \\
		2n &\text{ if } 1<p<n-1
	\end{cases}. $ 
\end{theorem}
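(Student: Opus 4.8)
The plan is to parametrise each basis permutation map by the permutation it induces on the cyclic windows
\[
W_i := \{\mdsum{i},\mdsum{i+1},\dots,\mdsum{i+p-1}\},
\]
so that $x_i=\sum_{j\in W_i}\rho_j$, and then to show that for $1<p<n-1$ exactly $2n$ induced permutations are admissible. The boundary cases $p=1$ and $p=n-1$ already give the count $n!$ by Proposition \ref{propcirnrh}, so I would restrict to $1<p<n-1$ (identifying $[n]$ with $\mathbb{Z}/n\mathbb{Z}$ through $\mdsum{\cdot}$). First I would record the reduction. A map $\phi\in\bpm{\C}$ is determined by a permutation $\pi$ of $[n]$ with $\phi(\rho_j)=\rho_{\pi(j)}$, and then $\phi(x_i)=\sum_{j\in W_i}\rho_{\pi(j)}=\sum_{j\in\pi(W_i)}\rho_j$ has exactly $p$ terms. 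Since $1<p<n$, this sum is neither $0$ nor $1$, so exactly as in the parenthetical remark of Lemma \ref{prp2} we have $\phi\in\nrh{\C}$ if and only if for every $i$ the set $\pi(W_i)$ is again a window $W_{\sigma(i)}$. As $\pi$ is a bijection and the $n$ windows are pairwise distinct (because $p<n$), the assignment $i\mapsto\sigma(i)$ is itself a permutation of $\mathbb{Z}/n\mathbb{Z}$.

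The key combinatorial step is a structural lemma on windows: for $1<p<n-1$, two distinct windows $W_a,W_b$ satisfy $|W_a\cap W_b|=p-1$ if and only if $b=a\pm1$. I would prove this from the intersection count for two cyclic intervals of length $p$ offset by $t\in\{1,\dots,n-1\}$,
\[
|W_a\cap W_{a+t}| = \max(0,\,p-t)+\max(0,\,p-(n-t)),
\]
and then solving $|W_a\cap W_{a+t}|=p-1$; the hypotheses $p>1$ and $p<n-1$ are exactly what rule out every offset except $t=1$ and $t=n-1$. (The value $p=n-1$ is special precisely because then \emph{every} offset yields overlap $p-1$, which is the source of the $n!$ count in that case.) Applying the lemma to the consecutive windows $W_i,W_{i+1}$, which share $p-1$ elements, and using that $\pi$ preserves intersection sizes, I obtain $|W_{\sigma(i)}\cap W_{\sigma(i+1)}|=p-1$, hence $\sigma(i+1)=\sigma(i)\pm1$ for every $i$.

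Next I would pin down $\sigma$ globally. The sequence $\sigma(0),\sigma(1),\dots,\sigma(n-1)$ is a bijective listing of $\mathbb{Z}/n\mathbb{Z}$ whose cyclically consecutive entries differ by $\pm1$, i.e.\ a closed Hamiltonian walk on the cycle graph $C_n$. Since the only Hamiltonian cycles of $C_n$ are the two orientations of $C_n$ itself, the step direction cannot change, so either $\sigma(i)=\mdsum{a+i}$ for all $i$ (all steps $+1$) or $\sigma(i)=\mdsum{a-i}$ for all $i$ (all steps $-1$), with $a\in\mathbb{Z}/n\mathbb{Z}$. This produces exactly $2n$ candidate permutations $\sigma$. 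I expect this monotonicity — excluding mixed $\pm1$ directions — to be the main obstacle: it is the point where one must use that $\sigma$ is a genuine permutation, and not merely a $\pm1$ walk, to force a single global direction.

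Finally, for each admissible $\sigma$ I would recover $\pi$ and verify realisability and distinctness. From $\{\pi(i)\}=\pi(W_i\setminus W_{i+1})=W_{\sigma(i)}\setminus W_{\sigma(i+1)}$ one reads off $\pi(i)=\sigma(i)$ in the rotation case and $\pi(i)=\mdsum{\sigma(i)+p-1}$ in the reflection case, so $\pi$ is uniquely determined by $\sigma$; a direct check then confirms $\pi(W_i)=W_{\sigma(i)}$, so the resulting $\phi$ indeed lies in $\bpm{\C}\cap\nrh{\C}$. The $n$ rotations $\pi(i)=\mdsum{a+i}$ and the $n$ reflections $\pi(i)=\mdsum{b-i}$ are pairwise distinct (here $n\ge4$), giving the claimed total of $2n$ and recovering Lemma \ref{prp2} at $p=2$.
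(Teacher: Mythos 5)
Your proposal is correct, and it takes a genuinely different route from the paper. The paper argues element by element: it fixes $\phi(\rho_1)=\rho_j$ ($n$ choices), proves by a hands-on contradiction (examining $\phi(x_1),\phi(x_2),\phi(x_3)$ and where $\rho_j$ can sit inside the window $x_l$) that $\phi(\rho_2)$ must land on a neighbour of $\rho_j$ (2 choices), and then shows every subsequent $\rho_i$ is forced, yielding $n\cdot 2\cdot 1\cdots 1=2n$; moreover it treats $p=2$ separately (Lemma \ref{prp2}) before handling $2<p<n-1$. You instead reparametrise globally: a BPM $\phi$ corresponds to $\pi\in S_n$, membership in $\nrh{\C}$ is equivalent to $\pi$ carrying each length-$p$ cyclic window to a window, and your intersection-count lemma $|W_a\cap W_{a+t}|=\max(0,p-t)+\max(0,p-(n-t))$ shows that for $1<p<n-1$ overlap $p-1$ characterises adjacent windows — so the induced map $\sigma$ on window indices is a bijective $\pm1$-walk on $\mathbb{Z}/n\mathbb{Z}$, hence monotone (a direction change gives $\sigma(i+2)=\sigma(i)$, violating injectivity, so the step you flagged as the main obstacle is in fact immediate), leaving exactly the $n$ rotations and $n$ reflections. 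Your approach buys three things the paper's does not: it subsumes $p=2$ into the general case rather than needing a separate lemma; it explains structurally why $p=n-1$ is exceptional (every offset then has overlap $p-1$, so the window constraint is vacuous and all $n!$ permutations survive); and it makes the realisability of all $2n$ candidates explicit by exhibiting them as rotations $j\mapsto j+c$ and reflections $j\mapsto b-j$ and checking $\pi(W_i)$ is a window, a verification the paper's choice-counting leaves largely implicit. Conversely, the paper's argument is more elementary, using only direct manipulation of the relations $\phi(x_i)\in\{x_j\}$. Both proofs correctly rely on $\phi(x_i)$ having exactly $p$ terms with $0<p<n$ to exclude the values $0$ and $1$, and both delegate $p\in\{1,n-1\}$ to Proposition \ref{propcirnrh}.
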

\begin{proof}
		\textbf{Case 1: }${ p=1\text{ or } n-1.}$ In this case we get the result using Proposition \ref{propcirnrh}.\newline
		\textbf{Case 2:} $ {p=2}. $ This is Lemma \ref{prp2}.\newline
		\textbf{Case 3:} $ 2<p<n-1. $ As $ p < n-1 $ and for all $ i\in[n] $ we have $ x_i=\sum_{k=0}^{p-1}\rho_{\mdsum{i+k}}^{},  $ this gives us the following equations
		\begingroup
		\allowdisplaybreaks
	 \begin{align*}
		x_1^{}&=\rho_1^{}+\rho_2^{}+\dots+\rho_p^{},\\ x_2^{}&=\rho_2^{}+\rho_3^{}+\dots+\rho_{p+1}^{},\\ x_3^{}&=\rho_3^{}+\rho_4^{}+\dots+\rho_{p+2}^{}.
	\end{align*} \endgroup	Let $ \phi\in \bpm{\C} $ be a neural ring endomorphism. As seen in the proof of Lemma \ref{prp2}, it is enough to see the restriction of $ \phi $ to basis elements. We begin with $ \rho_1 .$ As $ \rho_1^{} $ can map to $ \rho_i^{} $ for any $ i\in [n] $, we get that $ \rho_1^{} $ has $ n  $ choices. Assume that $ \rho_1^{} $ is mapped to $ \rho_j^{} $ for some $ j\in[n]. $ By Lemma \ref{lemrho2} $ \phi $ maps $ \rho_2$ to either $ \rho_{\mdsum{j+1}}^{} $ or $ \rho_{\mdsum{j-1}}^{} $. In other words $ \phi(\rho_2^{}) $ is mapped to the basis element that is adjacent to $ \phi(\rho_1^{}).$  Similarly $ \rho_3^{} $ can have 2 possibilities, i.e., it can be mapped to basis elements that are adjacent to $\phi( \rho_2^{}) $. Fix $ \rho_2^{}\mapsto \rho_{\mdsum{j+1}^{}}, $ then $ \rho_3^{}\mapsto\rho_{\mdsum{j+2}^{}} $ or $ \rho_3^{}\mapsto\rho_{j}^{}. $ But the latter is not possible as $ \phi(\rho_1^{})=\rho_j^{}. $ Even if $ \rho_2^{}\mapsto \rho_{\mdsum{j-1}}^{}  $ we get that $ \rho_3^{} $ can only be mapped to $ \rho_{\mdsum{j-2}}^{} $ for the same reason. Therefore $ \rho_3^{} $ has only one choice to get mapped, whenever $ \phi(\rho_1^{}) $ and $ \phi(\rho_2^{}) $ are already fixed. Further for $ i\in[n]\backslash [3] $  we see $ \rho_i^{} $  has just 1 choice for to be mapped to. So, the total choices for $ \phi $ to be an neural ring endomorphism is $ n\times2\times 1\times\dots\times1=2n. $ Hence the result.
\end{proof}
We know that  $ |\nrh{\C}| =n!+n $ for  circulant codes with support $ p=1 $ and $ p=n-1 $ by Proposition \ref{propcirnrh}. Now by Theorem \ref{thnrhbpm}, we get that $ |\nrh{\C}| \geq 3n $  for all circulant codes with support $p$ on  $n>2 $ neurons. Further, we investigate how non basis permutation and non unity maps behave on circulant codes with support $ 1<p<n-1. $  Before that we will introduce some notations. Let $ y_{i}=   \rho_{i_1^{}}^{}+\rho_{i_2^{}}^{}+\dots+\rho_{i_k^{}}^{}$  be some combination of $ k $ number of $ \rho_j $'s.  We will use $ \norm{y_i} $ as the notation to indicate the number of distinct $ \rho_j $'s in the expression of $ y_i. $ Therefore $ \norm{y_i}=k$ for the above expression of $ y_i $. Similarly, $\norm{x_i}=p $ for a circulant code of support $ p $ since $ x_i=\sum_{k=0}^{p-1} \rho_{\mdsum{i+k}}^{}. $ We  already know by definition that any $ \phi\in \rh{\C} $ is in $ \nrh{\C} $  if for all $ i\in [n] $ we have $ \phi(x_i)\in\{x_j\mid j\in [n]\} \cup\{0,1\}. $ With the notation $ \norm{.} $ the necessary condition for $ \phi\in
\rh{\C}$ to be in  $\nrh{\C} $ is: for all $ i\in[n] $ we must have $ \norm{\phi(x_i)}\in\{0,n,\norm{x_j}\}$  for some $ j\in[n]. $ Further, the necessary condition for a map $ \phi \in\rh{\C}$ on circulant codes $ \C $ with support $ p $ to be in $ \nrh{\C} $ will be $ \norm{\phi(x_i)}\in\{0,n,p\} $ for all $ i\in [n]$.  Note that for all $ i\in [n] $ we have $ \phi(\rho_i)=\sum_{j=1}^{n}a_{ij}^{}\rho_j= \sum_{a_{ij}=1}\rho_j^{}. $ Thus $ \norm{\phi(\rho_i^{})}=|a_i|. $    Also,  $ \norm{\phi(x_i)}=\displaystyle\sum_{k=0}^{p-1}\Big \Vert{\phi\left( \rho_{\mdsum{i+k}}^{}\right)}\Big\Vert = \sum_{k=0}^{p-1} \Big|a_{\mdsum{i+k}}^{}\Big|.$ \\ \no  We have already seen that $ |\nrh{\C}|\geq 3n $ as it consists of 2n basis permutation maps (Refer Lemma \ref{prp2}) and $ n $ unity maps. In the next theorem we will show that $  |\nrh{\C}|= 3n $  for circulant code  with support $ p=2 $ when $ n $ is odd. But we first establish a lemma that we require to prove this theorem. 
\begin{lemma}
	Let $ \C $ be a circulant code with support $ p=2 $ and let $ \phi $ be a non-BPM and non-UM with $ \{a_i\}_{i\in[n]} $ as the corresponding vectors of $\phi$. Suppose $ \phi\in\nrh{\C} $ then $|a_i|\in\{0,2\}$ for all $i\in[n].$ \label{lembpm2}
\end{lemma}
\begin{proof}
	First we claim that $ {\text{for all }  i, \text{ we have } |a_i|\leq 2. } $ Suppose not. Then there exists $ j $ such that $ |a_j|=k>2. $ Also as $ \phi $ is a non unity map we have $ k<n. $ We know that $ x_j=\rho_j+\rho_{\mdsum{{j+1}^{}}}^{}. $ By the necessary condition for $ \phi\in \nrh{\C}, $ we have that $ \norm{\phi(x_j)}=0,2 $ or $ n. $ But $ \norm{\phi(\rho_j)}=|a_j|=k>2.$ So, the only possibility is that $ \norm{\phi(x_j)}=n. $ Therefore  $ |a_{\mdsum{{j+1}}}^{}|=\norm{\phi(\rho_{\mdsum{{j+1}}^{}})}=n-k. $ Also, as $ |a_j|+\Big|a_{\mdsum{{j+1}}}^{}\Big|=n, $ we get that $ |a_i|=0, $ for all $ a_i\not=a_j $ and $ a_i\not=a_{\mdsum{{j+1}}}. $ As $ |a_{\mdsum{j-1}}|=0, $ we have $ \phi\left(\rho_{\mdsum{j-1}}^{}\right) =0.$ So, \begin{align*}
		\phi \left( x_{\mdsum{{j-1}^{}}}^{}\right)  &= \phi\left(\rho_{\mdsum{j-1}}^{}+\rho
		_j\right)=\phi\left(\rho_{\mdsum{j-1}}^{}\right)+\phi(\rho_j)= \phi\left( {\rho_j}\right) .
	\end{align*}$  $ Therefore $ \Big\Vert{\phi\left( x_{\mdsum{{j-1}^{}}}\right) }\Big\Vert=\norm{\phi(\rho_j)}=k\not=0,2  $ or $ n $ as $ 2<k<n.  $ This is a contradiction to the necessary condition of $ \phi\in\nrh{\C}. $  Hence the claim.
	
	Further we show for all $i\in[n],   |a_i|\neq=1$ 	Suppose,  there exists $ j\in[n] $ such that $ |a_j|=1. $  As $|a_j|= \norm{\phi(\rho_j)}=1, $ gives us $ \norm{\phi(x_j)} \not=0.$ Also,  for all $i\in [n], \ |a_i|\leq 2 $ so we have $ \norm{\phi(x_j^{})}=|a_j|+|a_{\mdsum{j+1}^{}}|=1+|a_{\mdsum{j+1}^{}}|\leq 1+2=3. $ Therefore $ \norm{\phi(x_j)} \not=n, $ since $ n>3. $  Thus the necessary condition gives us that $ \norm{\phi(x_j)}=2$. So, $ \norm{\phi(\rho_{\mdsum{{j+1}}}^{})}=1. $ Iteratively, for all $ i\in[n] $ that $ \norm{\phi(\rho_i)}=1=|a_i|. $ This implies that $ \phi\in\bpm{\C}, $ which is a contradiction. Hence the proof.	
\end{proof}
\begin{theorem}
	Let $ \C $ be a circulant code with support $ p=2. $ If $ n $ is odd then $ |\nrh{\C}|=3n.  $ \label{thnp2}
\end{theorem}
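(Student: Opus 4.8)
The plan is to partition $\nrh{\C}$ according to the three classes of ring endomorphisms introduced above and count each piece. Recall from Theorem \ref{thnrhbpm} that for $p=2$ exactly $2n$ basis permutation maps lie in $\nrh{\C}$, and that every unity map is automatically a neural ring homomorphism, contributing $n$ more; since basis permutation maps and unity maps are disjoint (the former have all $|a_i|=1$ while the latter have some $|a_i|=n$, and $n>1$), this already accounts for $3n$ elements of $\nrh{\C}$. Thus the whole theorem reduces to the single claim that, when $n$ is odd, no non-BPM non-UM map belongs to $\nrh{\C}$.

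To prove this claim I would argue purely at the level of the sizes $m_i := |a_i|$. Writing $\phi \leftrightarrow \{a_i\}_{i\in[n]}$, Remark \ref{obsrh}(3) tells us the supports of the vectors $a_i$ are pairwise disjoint, so the terms of $\phi(\rho_i)$ and $\phi(\rho_{\mdsum{i+1}})$ cannot cancel; consequently $\norm{\phi(x_i)} = m_i + m_{\mdsum{i+1}}$. Since membership in $\nrh{\C}$ forces $\norm{\phi(x_i)} \in \{0,n,p\} = \{0,n,2\}$ for every $i$, and since Remark \ref{obsrh}(4) gives $\sum_{i=1}^{n} m_i = n$, the problem becomes the following combinatorial one: classify all cyclic sequences $(m_1,\dots,m_n)$ of non-negative integers (indices read cyclically via $\mdsum{\cdot}$) with $\sum_i m_i = n$ and $m_i + m_{\mdsum{i+1}} \in \{0,2,n\}$ for all $i$.

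The heart of the argument is this classification, which I expect to split into two regimes. If some consecutive pair sums to $n$, then because all $m_j\ge 0$ and $\sum_j m_j = n$ every other entry must vanish; a short check of the two neighbouring pair-sums then forces $n-2\in\{0,2\}$ unless all the mass sits in a single coordinate, and for odd $n>2$ the former is impossible, leaving exactly the unity pattern (one $m_i=n$, the rest $0$). If no consecutive pair sums to $n$, then every pair-sum is $0$ or $2$, which bounds each $m_i\le 2$; here $m_i=1$ propagates (as $1+m_{\mdsum{i+1}}\in\{0,2\}$ forces $m_{\mdsum{i+1}}=1$), so either all entries equal $1$ (the basis permutation pattern) or every entry lies in $\{0,2\}$ with no two adjacent $2$'s, in which case $\sum_i m_i$ is even, contradicting $n$ odd. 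Hence for odd $n$ the only admissible size patterns are the all-ones pattern and the single-$n$ pattern, i.e.\ exactly the BPM and UM maps; a non-BPM non-UM map exhibits neither pattern and so cannot even meet the necessary condition. Combining with the counts above gives $|\nrh{\C}| = 2n + n = 3n$.

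The main obstacle is the parity step in the second regime: it is precisely the oddness of $n$ that rules out the all-$\{0,2\}$ patterns, and this is where the even case (e.g.\ $n=4$ in Remark \ref{rem42}) genuinely produces extra maps. Some care is also needed to confirm that passing from the numerical necessary condition back to actual maps loses nothing here, which is guaranteed because the excluded patterns fail the necessary condition outright rather than merely failing sufficiency.
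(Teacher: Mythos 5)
Your proposal is correct and follows essentially the same route as the paper: both arguments reduce to showing no non-BPM, non-UM map survives, working purely with the sizes $|a_i|$ via the necessary condition $\norm{\phi(x_i)}=|a_i|+|a_{\mdsum{i+1}}|\in\{0,2,n\}$ together with $\sum_i |a_i|=n$, ruling out an entry larger than $2$ (your pair-sums-to-$n$ regime is the paper's Claim), propagating $|a_i|=1$ around the cycle to force the all-ones pattern, and finally using parity of $\sum_i|a_i|$ against odd $n$ to kill the all-$\{0,2\}$ patterns. The only difference is organizational: you package the steps as a clean classification of cyclic size sequences, which absorbs the $n=3$ case that the paper treats separately via Proposition \ref{propcirnrh}.
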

\vspace{-0.5cm}
\begin{proof}
	\no Clearly $ n $ cannot be 1 as $ p=2. $ \\ 
	\textbf{Case 1:} $ n=3 $ \\ As $ p=2=n-1 $ in this case. By Proposition $ \ref{propcirnrh} $ we already know that $ \nrh{\C}=n!+n=3!+3=3\cdot n. $ Hence the proof.\\
	\textbf{Case 2:} $ n>3  $\\  In this case, we are only left to show that there are no more neural ring endomorphisms.
	
	Let $ \phi $ be a non-BPM and non-UM with $ \{a_i\}_{i\in[n]} $ as the vectors that represent it. Suppose $ \phi $ is a neural ring endomorphism. By Lemma \ref{lembpm2} we have that $|a_i|\in\{0,2\}$ for all $i\in[n].$	So, $\sum_{i=1}^{n}|a_i|$ is an even number. 
	From Remark \ref{obsrh}, $ \sum_{i=1}^{n}|a_i|=n. $ This forces $ n $ to be even. This is a contradiction to the hypothesis that $  n $ is odd. Therefore $ \phi \not \in \nrh{\C}. $ Hence the proof. 
\end{proof}
In the view of Theorem \ref{thnp2} we  further count the cardinality of $ \nrh{\C} $ when $ n $ is even. In Remark \ref{rem42} we have seen that for  of a circulant code $ \C $ with support $ p=2 $ on $ n=4 $ neurons we have $ |\nrh{\C}|=36. $   We will now look for $ n\geq 6 $ in the following theorem.
\begin{theorem}
	Let $ \C $ be a circulant code  with support $ p=2. $ If $ n>4 $ is even then $ |\nrh{\C}|=3n+2^2\left(\dfrac{n}{2}\right)!. $\label{thnpk2}
\end{theorem}
\begin{proof}
Let $ n=2k $ for some $ k>2. $	We first count the total number of non-BPM and non-UM  that are in $ \nrh{\C}. $ Let $ \phi $ be a non-BPM and non-UM  with $ \{a_i\}_{i\in[n]} $ as its representing vectors. By Lemma \ref{lembpm2} for all $ i\in[n]  $ we have $ |a_i|\in\{0,2\} $   Suppose if $ |a_i|=2=|a_{\mdsum{{i+1}}}|, $ then $ \norm{\phi(x_i)}=4. $ This contradicts the necessary condition of neural ring endomorphism as $ n>4. $ This implies no two consecutive $ a_i $'s have the same value, i.e., $ |a_i|\not=|a_{\mdsum{i+1}}| $ for any $ i\in[n].  $ Thus  if $ |a_1|=2 $ then  for all $ m\in[k]   $ we get $ |a_{2m-1}|=2 $ and $ |a_{2m}|=0. $ Similarly, if $ |a_2|=2 $ for all $m\in [k] $ we get $ |a_{2m}|= 2$ and $ |a_{2m-1}|=0. $ Therefore when $ \phi \in \nrh{\C} $ there are broadly two types of choices for the vectors that can represent it. Let us fix one type of choice and count how many such neural ring endomorphisms it corresponds to. By the choice of all $ |a_i| $ we see that for all $i\in [n],\ \norm{\phi(x_i)}=2. $ This implies for all $ i\in [n] $ there exists $ j\in[n] $ such that $ \phi(x_i^{})=x_j^{}. $
	
	Assume $ |a_1|=2. $  Consider, $$\phi(x_1^{})=\phi(\rho_1^{}+\rho_2^{})=\sum_{j=1}^{n}a_{1j}\rho_j+\sum_{j=1}^{n}a_{2j}\rho_j=\sum_{j=1}^{n}a_{1j}\rho_j=\phi(\rho_1^{}). $$ Let $ \phi(x_1^{})=x_i^{} $ (say) for some $ i\in [n]. $ Then $ \phi(\rho_{1}^{})=x_i^{} $ and clearly $ \rho_1 $ has $ n $ choices. Similarly, whenever $ |a_l|=2 $ we get that $ \rho_l^{}\mapsto x_j=\rho_j^{}+\rho_{\mdsum{{j+1}}}^{}. $ In general, $ \phi $ maps every basis element to $ 0 $ or a consecutive\footnote{We consider $ \rho_n^{}+\rho_1^{} $ as a consecutive sum} sum of basis elements. In this case  as $ |a_{2m-1}|=2 $ and $ |a_{2m}|=0 $ for all $ m\in [k] $  we have  $ \phi(\rho_{2m}^{}) =0$ for all $ m\in[k]. $ So, we need to only figure out $ \phi(\rho_{2m-1}^{}).$ As we have already fixed when $ m=1 $, we  look at $ m=3, $ i.e., we need to find where $ \rho_3^{} $ is mapped by $ \phi $. Let, if possible $ \rho_3{} \mapsto x_{\mdsum{{i+r}}}^{}  $ where $0<r<n $ and $  r$ is odd. Firstly, note that as $ \phi(\rho_1^{})=x_i=\rho_i+\rho_{\mdsum{i+1}} $ and $ x_{\mdsum{i+1}}=\rho_{\mdsum{i+1}} +\rho_{\mdsum{i+2}},\ x_{\mdsum{i+(n-1)}}=\rho_{\mdsum{i+(n-1)}}+\rho_i$ we have $ r\notin \{1,n-1\} $. So now as $ r\geq 3 $ we observe that the number of $ \rho_j^{} $'s that are in between $ \rho_{\mdsum{i+1}} $ and $ \rho_{\mdsum{{i+r}}} $ is $ r-2. $      
	Note that once the $ \phi(\rho_{2m-1}) $ is chosen for all $ m\in[k-1]\backslash[2] $ there will still be one $ \rho_l $ in between $ \rho_{\mdsum{i+1}} $ and $ \rho_{\mdsum{{i+r}}} $ as $ r-2 $ is odd. In other words this process will exhaust all the sum of  consecutive basis. Now we have to map $ \rho_{n-1} $ as $ |a_{n-1}|=2. $ But there is no more sum of consecutive basis left, meaning there is no choice for  $ \phi(\rho_{n-1}^{}). $  Therefore $ \rho_3^{} $ cannot map to $ x_{\mdsum{{i+r}}}^{} $ when $  r $ is odd.  Thus $ \phi:\rho_3^{}\mapsto x_{\mdsum{i+r}} $ for some even $ r\geq 2. $ This clearly gives $ \frac{n}{2}-1 =k-1$ choices for $\rho_3^{}  $ to be mapped by $ \phi. $  Similarly we observe that $ \rho_5^{} $ will have $ k-2 $ choices. At the end we  see that $ \rho_{n-1} $ has only 1 choice. Thus in total we get $ n(k-1)! $ as the number of possible $ \phi  $ that can neural ring endomorphism when $ |a_1|=2 $.
	
	Similarly, we get $ n(k-1)! $ as the number of possible $ \phi  $ that can neural ring endomorphism when $ |a_2|=2. $ Therefore total number of non-BPM and non-UM  that are in $\nrh{\C}  $ is $ 2n(k-1)!=2n\left(\frac{n}{2}-1\right)!=2^2\left(\frac{n}{2}\right)!. $ By Lemma $ \ref{prp2} $ we already know the count of BPM that are in $ \nrh{\C} $ to be $ 2n. $ Finally adding the $ n $ unity maps we get the result. 
\end{proof}
\no	Combining the results of Theorem \ref{thnp2} and \ref{thnpk2} together, we have  $$ |\nrh{\C}|=\begin{cases}
	3n  \qquad &\text{ if } n \text{ is odd and } n>1\\
	3n+2^2\left(\dfrac{n}{2}\right)!  \qquad &\text{ if } n \text{ is even and } n>4,
\end{cases}$$ where $ \C $ is a circulant code with support $ p=2. $

Theorem \ref{thnp2} and \ref{thnpk2} gave us a hint that $ \GCD(p,n) $ could play a vital role in deciding the count of $ \nrh{\C}. $ With brute force we found that for a circulant code with support $ p=3 $ on $ n=3k+1 $ and $ n=3k+2, $  the non-BPM and non-UM  that are in $ \nrh{\C} $ is zero. This leads us to think that the non-BPM and non-UM  in $ \nrh{\C} $ is zero when $ \GCD(p,n)=1. $ We show this statement has an affirmative answer (Proposition \ref{propgcd1} and Proposition \ref{propgcd2}). But we first prove a couple of lemmas that we require in the proof of the above statement. We will also introduce some new notations to help us simplify these propositions' proof.
\begin{lemma}
	Let $ \C $ be a circulant code with support $ p>1$ with $ \GCD(p,n)=1$ and $ \phi\in\rh{\C}$ be a non-BPM and non-UM. For all $ i\in [n] $ if $ \norm{\phi(x_i)}\in\{0,p,n\} $  then $ \norm{\phi(x_i)}\not=n$. \label{lemncnrh1}
\end{lemma} 
\begin{proof}
		Let $ \phi $ be a non-BPM and non-UM such that for all $ i\in[n],\ \norm{\phi(x_i)}\in\{0,p,n\}. $ 
	Let $ \{a_i\}_{i\in[n]} $ be the vectors that represent $ \phi. $ Suppose there exists $ j\in[n] $ such that $ \norm{\phi(x_j)}=n. $ Without loss of generality let us assume $ j=1. $ As $ x_1=\rho_1+\dots+\rho_p $ we get $ n=\norm{\phi(x_1^{})}=|a_1^{}|+|a_2^{}|+\dots+|a_p^{}|. $ This implies for all  $ k\in[n]\backslash[p] $ we have $ |a_k^{}|=0. $ Let $ l\in[p] $ be the smallest  such that $ |a_l^{}|\not=0  $. Hence $ n=\norm{\phi(x_1)}=|a_l^{}|+\dots+|a_p^{}|. $ \\Consider,
	\begingroup
	\allowdisplaybreaks
	\begin{align*}
		\norm{\phi(x_1^{})}-\norm{\phi(x_l^{})}=&\Big( |a_1^{}|+\dots+|a_{l}^{}| +\dots+ |a_{p}^{}|\Big) -\left( |a_{l}^{}|+\dots+|a_p^{}|+|a_{\mdsum{p+1}}^{}|+\dots+|a_{\mdsum{l+p-1}}^{}|\right)  \\
		=& |a_1^{}|+\dots+|a_{l-1}^{}|-(|a_{\mdsum{p+1}}^{}|+\dots+|a_{\mdsum{l+p-1}}^{}|)\\
		=& |a_1^{}|+\dots+|a_{l-1}^{}| \qquad (\text{Since } |a_k^{}|=0  \text{ for all } k\in[n]\backslash[p] ) \\
		=& 0 \qquad (\text{Since } l \text{ is the smallest integer such that } |a_l^{}|\not=0 ).
	\end{align*} 
	\endgroup So, we get $ \norm{\phi(x_l^{})}=n. $ Next,  we have $ |a_{l+1}^{}|+\dots+|a_p^{}|<n  $ as $ |a_l^{}|\not=0. $ Moreover,  $ 0<|a_{l+1}^{}|+\dots+|a_p^{}|<n, $ if not $ |a_l^{}|=n $ and that is not possible as $ \phi $ is not a unity map. Consider, 
	\begin{align*}
		\norm{\phi(x_{l+1}^{})}=&|a_{l+1}^{}|+\dots+|a_p^{}|+ \dots+|a_{\mdsum{l+p}}^{}|\\
		=&|a_{l+1}^{}|+\dots+|a_p^{}| \qquad (\text{Since } |a_k^{}|=0  \text{ for all } k\in[n]\backslash[p] )\\
		\implies& 0<	\norm{\phi(x_{l+1}^{})}<n \quad (\text{Since } 0<|a_{l+1}^{}|+\dots+|a_p^{}|<n)
	\end{align*}
	Also, by the hypothesis $ \norm{\phi(x_{l+1^{}})}\in\{0,p,n\}. $  Hence $ \norm{\phi(x_{\mdsum{l+1}^{}})}=p $  and $ n-p=\norm{\phi(x_l^{})}-\norm{\phi(x_{\mdsum{l+1}^{}})}=|a_l|-|a_{ \mdsum{l+p}}|. $ Now, if   $ \mdsum{l+p}\in [n]\backslash[p], $ then   $|a_{ \mdsum{l+p}}|=0 $. Or if $ \mdsum{l+p}\in [p] $ we observe that $ \mdsum{l+p}=l+p-n<l $ as $ p<n. $ Thus if $ |a_{ \mdsum{l+p}}|\not=0,  $ it contradicts the minimality of $ l. $ Both the cases results in $ |a_{\mdsum{l+p}}|=0 $ and this implies $ |a_l|=n-p. $ Let $ m\in[p]\backslash[l] $ be the smallest such that $ |a_m|\not=0. $  Note that as $ |a_l|=n-p  $ and $ \sum_{i=1}^{p}|a_i|=n $ we get $ 0<|a_m|\leq p. $  Suppose $ |a_m|=k <p $ then \begin{align*}
		\norm{\phi(x_{\mdsum{m+1}}{})}=&|a_{m+1}|+\dots+|a_p|+\dots+|a_{\mdsum{m+p}}|\\=&n- \sum_{i=1}^{m}|a_i| \\=&n-(n-p+k)=p-k\not\in\{0,p,n\}.
	\end{align*}Therefore it ensures $ |a_m|=p. $ Thus $ |a_i|=0 $ for all $ i\in[n]\backslash\{l,m\}. $ \\
	Note that, \begin{align*}
		&	x_{\mdsum{m+n-p}}^{} = \rho_{\mdsum{m+n-p}} +\dots+\rho_1+\dots+\rho_l+\dots+\rho_{\mdsum{m+n-1}} \\  \implies & \norm{\phi(x_{\mdsum{m+n-p}}^{})}=|a_{\mdsum{m+n-p}}|+\dots+|a_l|+\dots+|a_{\mdsum{m+n-1}}|=|a_l|=n-p.
	\end{align*}  Also, for $ \norm{\phi(x_{\mdsum{m+n-p})}}\in\{0,p,n\} $ we must have $ n=p $ or $2p, $ or $ p=0 $.  But as $ \GCD(p,n)=1 $ and $ p>1 $ none of them is possible. Therefore it is a contradiction to the hypothesis. Hence $ \norm{\phi(x_i)}\not=n $ for any $ i\in[n]. $  
\end{proof}
\begin{lemma}
	Let $ \C $ be a circulant code with support $ p>1$ with $ \GCD(p,n)=1$ and $ \phi\in\rh{\C}$ be a non-BPM and non-UM. For all $ i\in [n] $ if $ \norm{\phi(x_i)}\in\{0,p,n\} $  then $ \norm{\phi(x_i)}=p $. \label{lemncnrh}
\end{lemma} 
\begin{proof}
		Let $ \phi $ be a non-BPM and non-UM such that for all $ i\in[n],\ \norm{\phi(x_i)}\in\{0,p,n\}. $ 
	Let $ \{a_i\}_{i\in[n]} $ be the vectors that represent $ \phi. $ By Lemma \ref{lemncnrh1} we already know that $\norm{\phi(x_i)}\not =n.$ It is only left to show that 0 is also not possible.
	Let if possible there exists $ j\in[n] $ such that $ \norm{\phi(x_j)}=0. $ In fact, there exists  $ k\in[n-1] $ such that $ \norm{\phi(x_{\mdsum{j+k}})}\not=0. $ Thus $ \norm{\phi(x_{\mdsum{j+k}})}=p, $ as it cannot be $ n $ using Lemma \ref{lemncnrh1}. Choose the smallest $ k $ such that $  \norm{\phi(x_{\mdsum{j+k}})}=p, $ i.e., $ \norm{\phi(x_{\mdsum{j+m}})}=0 $ for all $ m<k. $ Also, as $ x_{\mdsum{j+k-1}} = \sum_{m=0}^{p-1} \rho_{\mdsum{j+k-1+m}} $ we have $0=\norm{\phi(x_{\mdsum{j+k-1}})}=\sum_{m=0}^{p-1}|a_{\mdsum{j+k-1+m}}|.$ Therefore $ |a_{\mdsum{j+k-1+m}}|=0 $ for all $ m\in \{0\}\cup[p-1]. $ Consider $$ x_{\mdsum{j+k}}=\rho_{\mdsum{j+k}}+\dots+\rho_{\mdsum{j+k+p-1}}= x_{\mdsum{j+k-1}}-\rho_{\mdsum{j+k-1}}+\rho_{\mdsum{j+k+p-1}}. $$ So, $ p=\norm{\phi({x_{\mdsum{j+k}}})}=\norm{\phi(x_{\mdsum{j+k-1}})}- |a_{\mdsum{j+k-1}}| + |a_{\mdsum{j+k+p-1}}|=|a_{\mdsum{j+k+p-1}}|$. Next, we choose the smallest $ l>0 $ such that $ \norm{\phi(x_{\mdsum{j+k+l}})}=p $ and repeating the process as above we get $ |a_{\mdsum{j+k+l+p-1}}|=p $ and other $ |a_i| $'s corresponding to $ \rho_i $'s that are in the expression of $ x_{\mdsum{j+k+l}} $ are 0. Therefore for all $ i\in[n] $ we get  $ |a_i|\in\{0,p\}. $ As  $ \sum_{i=0}^{n}|a_i|=n $ and $ \sum_{i=0}^{n}|a_i|=dp, $ this implies $ p|n $ and $ \GCD(p,n)=p\not =1. $ This is a contradiction to our hypothesis that $ \GCD(p,n)=1. $   Hence the result. 
\end{proof}
\no In other words Lemma \ref{lemncnrh} says that  if  $ \phi $ a non-BPM and non-UM satisfies the necessary condition to be a neural ring endomorphism then $ \norm{\phi(x_i)}=p $ for all $ i\in[n]. $

\begin{obs}\label{obsgcd}
	Consider $ \C $ to be a circulant code with $ p>1 $ and $ \GCD(n,p)=1 $ with $ n=pd+r, $ where $ 0<r<p. $  Let $ \phi\in \rh{\C} $ be a non-BPM and non-UM such that for all $ i, \norm{\phi(x_i)}\in\{0,p,n\}  $. Then by Lemma \ref{lemncnrh} for all $ i\in[n] $ we get $ \norm{\phi(x_i)}=p. $ Let $ \{a_i\}_{i\in[n]} $ be  the vectors that represent $ \phi $.  In this observation we organize these vectors into batches of $ p $'s. Then we relabel the set $ \{a_i\}_{i\in[n]}  $ to write them as  $ \{\underbrace{\beta_{11},\dots,\beta_{1p}},\underbrace{\beta_{21}\dots,\beta_{2p}^{}}, \dots, \underbrace{\beta_{d1}\dots\beta_{dp}}, \beta_{(d+1) 1},\dots \beta_{(d+1) r}\},$ where   $  \beta_{ij}=a_{(i-1)p+j}^{} $ for $ i\in[d], j\in[p] $ and   $  \beta_{(d+1)j}=a_{dp+j}^{} $ for all $ j\in[r] $. Considering the vectors $\{\beta_{ij}\}$'s instead of $\{a_k\}$'s will help us simplify writing the proofs of the next two results. We will now observe some facts about $ \beta_{ij} $'s and use these facts directly in the proofs.  
	\begin{enumerate}
		\item $\sum_{j=1}^{p}|\beta_{1j}|=\sum_{j=1}^{p}|a_j|=\norm{\phi(x_1)}=p$.
		\item Similarly, for all $ i\in [d],\  \sum_{j=1}^{p}|\beta_{ij}|=p. $
		\item Note that, $\norm{\phi(x_2)}= |a_2|+\dots+|a_{p+1}|=|\beta_{12}|+\dots+|\beta_{1p}|+|\beta_{21}|.$ \item Since, $ \norm{\phi(x_1)}=\norm{\phi(x_2)}=p $ and $ \norm{\phi(x_1)}-\norm{\phi(x_2)}=|\beta_{11}|-|\beta_{21}| $. Hence, $ |\beta_{11}|=|\beta_{21}|. $
		\item Similarly using $ \norm{\phi(x_1)}=\norm{\phi(x_3)}=p $ we get $ |\beta_{12}|=|\beta_{22}|. $
		\item  Extending the above observation,  for all $ j\in[p]  $ we get $ |\beta_{1j}|=|\beta_{2j}| $.
		\item Similarly, for all $i\in[d]$ we get $ |\beta_{11}|=|\beta_{i1}|$.
		\item Furthermore, for all $i\in [d]$ and $j\in[p]$ that $|\beta_{1j}|=|\beta_{ij}| $
		\item Also, when $ i=d+1, \ |\beta_{1j}|=|\beta_{(d+1)j}| $ for all $ j\in[r]. $
		\item Consider,
		\begin{align*}
		\sum_{i=1}^{n}|a_i|	&=\sum_{i=1}^{d}\sum_{j=1}^{p}|\beta_{ij}|+\sum_{j=1}^{r}|\beta_{(d+1)j}|=pd+\sum_{j=1}^{r}|\beta_{(d+1)j}|, \\ & \text{and } 	\sum_{i=1}^{n}|a_i| =n \implies \sum_{j=1}^{r}|\beta_{(d+1)j}|=n-pd=r.  
		\end{align*}
	\item Thus,  $ \sum_{j=1}^{r}|\beta_{ij}|=r $ for all $ i\in[d+1]. $ \qquad (Using 5, 6 and 7)
	\item Since, $ \norm{\phi(x_1)}=\norm{\phi(x_n)}=p $ and $ \norm{\phi(x_n)}-\norm{\phi(x_1)}=|\beta_{(d+1)r}|-|\beta_{1p}| $. Hence, $ |\beta_{(d+1)r}|=|\beta_{1p}|. $
	\item Similarly using $ \norm{\phi(x_1)}=\norm{\phi(x_{n-j})}=p $ for all $ j\in{0}\cup [r-1] $ we get $ |\beta_{(d+1)(r-j)}|=|\beta_{1(p-j)}|. $
	\end{enumerate}   
\end{obs}

\no In the next two propositions we will show that the count of $ \nrh{\C} $ is 3n for any circulant code $\C$ with support $ 2<p<n-1 $ with $ \GCD(p,n)=1 $ for $ n=pd+1$ and $pd+2. $

\begin{proposition}\label{propgcd1}
	Let $ \C $ be a  circulant code  with support $ 2<p<n-1 $. If $ \GCD(p,n)=1 $ and $ n=pd+1 $ then $ |\nrh{\C}|=3n.$ 
\end{proposition}
\begin{proof}
	Let  $ \phi\in\rh{\C} $ be a non-BPM and non-UM  with $ \{a_{i}\}_{i\in [n]} $ as its representing vectors. Label the vectors $ \{a_i\}_{i\in[n]} $ as in Observation \ref{obsgcd} and rewrite them as $ \{\beta_{ij}\}_{i\in[d],j\in[p]} \cup\{\beta_{(d+1)j}\}_{j\in[r]}. $ Let if possible $ \phi\in\nrh{\C}. $ Then by Lemma \ref{lemncnrh} for all $ i\in[n] $ we get $ \norm{\phi(x_i)}=p. $ Now using Observation \ref{obsgcd} we will list some facts on $\beta_{ij}$'s which help us prove this theorem,
	 \begin{enumerate}
		\item $ \sum_{j=1}^{r}|\beta_{(d+1)j}|=r, $ and as $ r=1 $ we have $ |\beta_{(d+1)1}|=1. $
		\item For all $ i\in[d] $ we have $ |\beta_{i1}|=|\beta_{(d+1)1}|=1. $
		\item Also, for all $ i\in[d] $ we have $ |\beta_{ip}|=|\beta_{(d+1)1}|=1. $
		\item \label{obspo}	Consider, 
		\begin{align*}
			p&=\norm{\phi(x_n)}=|\beta_{(d+1)1}|+|\beta_{11}|+\dots+|\beta_{1(p-1)}|\\& =1+1+\sum_{j=2}^{p-1}|\beta_{1j}| \implies \sum_{j=2}^{p-1}|\beta_{1j}|=p-2.
		\end{align*} 
		\item  \label{obsp2} Also, \begin{align*}
			p&=\norm{\phi(x_{n-1})}=|\beta_{dp}|+|\beta_{(d+1)1}|+|\beta_{11}|+\sum_{j=2}^{p-1}|\beta_{1j}|-|\beta_{1(p-1)}|\\&=1+1+1+p-2-|\beta_{1(p-1)}| \implies  |\beta_{1(p-1)}|=1. \end{align*}
			\item \label{obsp3}Further, from Observation \ref{obsgcd} $ |\beta_{1{(p-1)}}|=|\beta_{i{(p-1)}}|, $ for all $ i\in[d]. $ Thus $|\beta_{i{(p-1)}}|=1$. 
			\item Similar to the discussion done for $\norm{\phi(x_n)}$ in point (\ref{obspo}), we repeat it for $\norm{\phi(x_{n-1})}$ to get $\sum_{j=2}^{p-2}|\beta_{1j}|=p-3. $ Repeating the calculations done in point (\ref{obsp2}), now for $\phi(x_{n-2}),$ and using $\sum_{j=2}^{p-2}|\beta_{1j}|=p-3$, we get $|\beta_{1(p-2)}|=1.$
			\item Similar to point (\ref{obsp3}), we get $ |\beta_{i(p-2)}|=1$ for all $i\in[d]$.
			\item Iteratively, we will get $|\beta_{i(p-j)}|=1$ for all $j\in[p-1].$    
	\end{enumerate}
	From the above points we get that all  $|\beta_{ij}|$'s are one. Since we obtained the $\beta_{ij}$'s after re-labeling the vectors $a_k$'s, so automatically $|a_k|=1$ for all $k\in[n].$ But this is a contradiction to the fact that $ \phi $ is a non-BPM and non-UM. Therefore $ \phi\not\in\nrh{\C}. $ This implies that none of the non-BPM and non-UM  are in $ \nrh{\C}. $ Moreover, by Theorem \ref{thnrhbpm} we already know the count of BPM that are in $ \nrh{\C} $ is $ 2n. $ Finally adding the $ n $ unity maps we get the result.
\end{proof}
\begin{proposition}\label{propgcd2}
	Let $ \C $ be a circulant code  with support $ 2<p<n-1 $. If $ \GCD(p,n)=1 $ and $ n=pd+2 $ then $ |\nrh{\C}|=3n.$ 
\end{proposition}
\begin{proof}
	Let  $ \phi\in\rh{\C} $ be a non-BPM and non-UM  with $ \{a_{i}\}_{i\in [n]} $ as its representing vectors. Label the vectors $ \{a_i\}_{i\in[n]} $ as in Observation \ref{obsgcd} and rewrite them as $ \{\beta_{ij}\}_{i\in[d],j\in[p]} \cup\{\beta_{(d+1)j}\}_{j\in[r]}. $ Let if possible $ \phi\in\nrh{\C}. $ Then by Lemma \ref{lemncnrh} for $ i\in[n] $ we get $ \norm{\phi(x_i)}=p. $ By Observation \ref{obsgcd},
	\begin{equation}\label{pointos3} 
		|\beta_{ip}|=|\beta_{(d+1)2}| \text{ and } |\beta_{i(p-1)}|=|\beta_{(d+1)1}| \text{ for all }  i\in[d]. 	\end{equation}  
	Consider,
	\begin{align*}
		0&=\norm{\phi(x_{n-3})}-\norm{\phi(x_{n-2})}\\&=|\beta_{dp}|+|\beta_{(d+1)1}|+|\beta_{(d+1)2}|+\sum_{j=1}^{p-3}|\beta_{1j}|-\Big(|\beta_{(d+1)1}|+|\beta_{(d+1)2}|+\sum_{j=1}^{p-3}|\beta_{1j}|+|\beta_{1(p-2)}|\Big)\\&=|\beta_{dp}|-|\beta_{1(p-2)}|. \implies |\beta_{1(p-2)}|=|\beta_{dp}|
	\end{align*} Furthermore, using Equation (\ref{pointos3}), $ |\beta_{1(p-2)}|=|\beta_{dp}|=|\beta_{(d+1)2}|. $ Similarly using $\norm{\phi({x_{n-4})}}-\norm{\phi(x_{n-3})}=0$ we get $ |\beta_{1(p-3)}|=|\beta_{d(p-1)}|=|\beta_{(d+1)1}|. $ Once again using Observation \ref{obsgcd}, $ \sum_{j=1}^{2}|\beta_{(d+1)j}|=2. $ Therefore, we have there possibilities: $ |\beta_{(d+1)1}|=|\beta_{(d+1)2}|=1$ or  $ |\beta_{(d+1)1}|=2 $ and $|\beta_{(d+1)2}|=0 $ or $ |\beta_{(d+1)1}|=0 $ and $|\beta_{(d+1)2}|=2.$ Accordingly, we get the following two cases. 
	\begin{caseof}
		\casea{$ |\beta_{(d+1)1}|=|\beta_{(d+1)2}|=1 $}{In this case $ |\beta_{1(p-2)}|=|\beta_{(d+1)2}|=1 $ and $  |\beta_{1(p-3)}|=|\beta_{(d+1)1}|=1.$  
			On extending we get $ |\beta_{1(p-j)}|=1 $ for all $ j\in[p]. $ This implies $|\beta_{1j}|=1$ for all $j\in[p].$ Therefore by observation \ref{obsgcd} for all $ i\in[d] $ and $ j\in[p] $ we get $ |\beta_{ij}|=1. $ Moreover, as $ |\beta_{(d+1)1}|=|\beta_{(d+1)2}|=1 $  we have all $|\beta_{ij}|$'s as one.  Since we obtained the $\beta_{ij}$'s after re-labeling the vectors $a_k$'s, thus automatically $|a_k|=1$ for all $k\in[n].$ This implies $ \phi $ is a BPM and that is a contradiction as we have chosen $ \phi $ to be a non-BPM and non-UM . Hence this case cannot occur.}
		\casea{$ |\beta_{(d+1)1}|=2,|\beta_{(d+1)2}|=0 $ or $ |\beta_{(d+1)1}|=0,|\beta_{(d+1)2}|=2.$}{We will work with $ |\beta_{(d+1)1}|=2,|\beta_{(d+1)2}|=0 $ and the other case is similar to this. In this case we get $ |\beta_{1(p-2)}|=|\beta_{(d+1)2}|= 0$ and $  |\beta_{1(p-3)}|=|\beta_{(d+1)1}|=2.$ 
			On extending we get  $ |\beta_{1(p-j)}|\in\{0,2\} $ for all $ j\in[p]. $ This implies $|\beta_{1j}|\in\{0,2\}$ for all $j\in[p],$ and $ p=\norm{\phi(x_1)}=\sum_{j=1}^{p} |\beta_{1j}| =2k$ for some $ k. $ This implies $ 2|p $ and in turn $ 2|\GCD(p,n). $ Therefore we get $ \GCD(p,n)\geq 2 $ which is a contradiction. Hence this case cannot occur either. }
	\end{caseof}
\no	Thus $ \phi$ cannot be in $ \nrh{\C}. $ By Theorem \ref{thnrhbpm} we already know the count of BPM that are in $ \nrh{\C} $ to be $ 2n. $ Finally adding the $ n $ unity maps we get the result.
\end{proof}

		Combining the results of Propositions \ref{propgcd1} and \ref{propgcd2} for a circulant code $ \C $ with support $ 2<p<n-1 $ we get that $ |\nrh{\C}|=3n $ for $ n=pd+r $ where $ r\in\{1,2\} $ and $ \GCD(p,n)=1. $ 
Our next aim was to generalize the above Propositions \ref{propgcd1} and \ref{propgcd2} for any $ r$ such that $ n=pd+r$ and $ 0<r<p $.  At this moment we do not have the proof of the generalization, but we  strongly believe in the following conjecture.

\begin{conjecture} \label{conj1}
	Let $ \C $ be a circulant code  with support $ 2<p<n-1 $. If $ \GCD(p,n)=1 $ and $ n=pd+r $ with $ 2<r<p, $ then $ |\nrh{\C}|=3n.$ 
\end{conjecture}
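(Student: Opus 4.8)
The plan is to prove the conjecture exactly as in Propositions \ref{propgcd1} and \ref{propgcd2}: show that \emph{no} non-BPM non-UM map can lie in $\nrh{\C}$. Once that is established, the count is immediate, since Theorem \ref{thnrhbpm} supplies precisely $2n$ basis permutation maps in $\nrh{\C}$ (as $1<p<n-1$), the $n$ unity maps are always in $\nrh{\C}$, and these two families are disjoint for $n>1$; hence $|\nrh{\C}|=2n+n=3n$. So I would assume for contradiction that $\phi\in\rh{\C}$ is a non-BPM non-UM map lying in $\nrh{\C}$, with representing vectors $\{a_i\}_{i\in[n]}$, and aim for a contradiction.

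The key observation, which I expect removes the need for any case split on $r$, is a periodicity argument. Since $\phi\in\nrh{\C}$, the necessary condition $\norm{\phi(x_i)}\in\{0,p,n\}$ holds for every $i\in[n]$; because $\GCD(p,n)=1$ and $\phi$ is non-BPM non-UM, Lemma \ref{lemncnrh} upgrades this to $\norm{\phi(x_i)}=p$ for all $i\in[n]$. Writing $c_i=|a_i|$ and recalling from Remark \ref{obsrh} that $\sum_{i=1}^n c_i=n$, the identity $\norm{\phi(x_i)}=\sum_{k=0}^{p-1} c_{\mdsum{i+k}}=p$ says that every cyclic window of length $p$ in the sequence $(c_1,\dots,c_n)$ sums to $p$.

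Subtracting two consecutive windows gives $\norm{\phi(x_{i+1})}-\norm{\phi(x_i)}=c_{\mdsum{i+p}}-c_i=0$, so the sequence $(c_i)$ is invariant under the cyclic shift by $p$. Because $\GCD(p,n)=1$, this shift has order $n$ on the index set and hence acts as a single $n$-cycle, forcing $(c_i)$ to be constant. Together with $\sum_{i=1}^n c_i=n$ this yields $c_i=|a_i|=1$ for every $i$, i.e.\ $\phi\in\bpm{\C}$, contradicting that $\phi$ is non-BPM. Thus $\nrh{\C}$ contains no non-BPM non-UM maps, and $|\nrh{\C}|=3n$. Notice that this argument never uses $r>2$, so it in fact settles the conjecture for every residue $r$ with $0<r<p$, i.e.\ under $\GCD(p,n)=1$ alone.

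I expect the only genuine subtlety to be bookkeeping: verifying that the window-difference identity $c_{\mdsum{i+p}}-c_i=0$ holds uniformly across the wrap-around implemented by $\mdsum{\cdot}$, which is routine. The real ``obstacle'' is conceptual rather than technical: the paper's intended route through Observation \ref{obsgcd} forces one to propagate the block-profile relations $\sum_{j=1}^p|\beta_{1j}|=p$, $\sum_{j=1}^r|\beta_{1j}|=r$, and the boundary matchings $|\beta_{1(p-j)}|=|\beta_{1(r-j)}|$ by an induction whose combinatorics grow with $r$ (as already seen in the $r=1,2$ cases, where one concludes either all $|\beta_{1j}|=1$ or a nontrivial divisor of $\GCD(p,n)$). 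That route is what makes the general statement look hard. Replacing it by the shift-by-$p$ periodicity collapses the case analysis entirely, so I would present the periodicity argument as the main proof and relegate the profile computation to a remark.
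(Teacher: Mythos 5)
Your argument is correct, and the first thing to say is that there is no proof in the paper to compare against: this statement appears there only as a conjecture, the nearest proved results being Propositions \ref{propgcd1} and \ref{propgcd2} (the cases $r=1$ and $r=2$). Those proofs run through Observation \ref{obsgcd}: relabel the $|a_i|$ into blocks $|\beta_{ij}|$, propagate the relations $\sum_{j=1}^{p}|\beta_{ij}|=p$, $\sum_{j=1}^{r}|\beta_{(d+1)j}|=r$ and the boundary matchings $|\beta_{(d+1)(r-j)}|=|\beta_{1(p-j)}|$, then finish with a case analysis on the possible values of $|\beta_{(d+1)j}|$ --- an analysis whose branching visibly grows with $r$, which is presumably why the general case was left open. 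Your periodicity argument is a genuine simplification that subsumes all of this. Each step checks out against the paper's own machinery: the identity $\norm{\phi(x_i)}=\sum_{k=0}^{p-1}c_{\mdsum{i+k}}$ with $c_j=|a_j|$ is legitimate because the supports of distinct $a_j$ are pairwise disjoint (Remark \ref{obsrh}, item 3), so no cancellation occurs over $\mathbb{F}_2$; Lemma \ref{lemncnrh} is stated and proved in the paper for all $p>1$ with $\GCD(p,n)=1$, not just for $r\leq 2$, so citing it to pin every window sum at $p$ is valid; differencing consecutive cyclic windows gives $c_{\mdsum{i+p}}=c_i$ for every $i\in[n]$, wrap-around included, since the $x_i$ are defined cyclically for all $i$; coprimality makes $i\mapsto\mdsum{i+p}$ a single $n$-cycle, so $(c_i)$ is constant, and $\sum_{i=1}^{n}|a_i|=n$ (Remark \ref{obsrh}, item 4) forces $c_i=1$ for all $i$, i.e.\ $\phi\in\bpm{\C}$, contradicting that $\phi$ is non-BPM; the count $2n+n=3n$ then follows from Theorem \ref{thnrhbpm} (applicable since $1<p<n-1$) together with the $n$ unity maps, these families being disjoint. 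In fact the block relations of Observation \ref{obsgcd} are exactly your orbit relations $c_{\mdsum{i+p}}=c_i$ written out coordinate-by-coordinate, so your proof explains why the $r=1,2$ computations closed up, and, as you note, it never uses $2<r<p$: the conclusion holds for every $r$ with $0<r<p$, i.e.\ under $\GCD(p,n)=1$ alone, unifying Propositions \ref{propgcd1}, \ref{propgcd2} and the conjecture into a single statement.
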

 \begin{remark}\label{remnp3}
 	Note that, if the circulant code with support $ p=3 $ is such that $ \GCD(n,3)=1 $ then $ n=3d+1 $ or $ n=3d+2 $ for some suitable choice of $ d. $ So, if $ n>4, $  Propositions \ref{propgcd1} and \ref{propgcd2} gives us that $ |\nrh{\C}| =3n.$ Moreover, by Proposition \ref{propcirnrh} if $ n=4 $ as $ p=3=n-1 $ we get $ |\nrh{\C}|=n!+n=28 $. Note that when $ p=3 $ we are now only left with $ n=3d $ case. By brute force we counted that $ |\nrh{\C}|=270 $ where $ \C $ is a circulant code on $ n=6 $ neurons with support $ p=3. $ In the next theorem we will work with $ n=3d $ where $ d>2. $
 \end{remark}
\begin{theorem}
	Let $ \C $ be a circulant code on $ n$ neurons with support $ p=3. $ If $ n=3d, $ where $ d>2 $ then $ |\nrh{\C}|=15n+3^2\left( \dfrac{n}{3} \right)!.  $ \label{thnp3k}
\end{theorem}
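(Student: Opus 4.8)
The plan is to split $ \nrh{\C} $ into the three classes $ \bpm{\C}\cap\nrh{\C} $, $ \um{\C} $, and the non-BPM non-UM maps, and to count each separately. By Theorem \ref{thnrhbpm}, since $ 2<p=3<n-1 $ (which holds because $ n=3d\geq 9 $), the basis permutation maps lying in $ \nrh{\C} $ number exactly $ 2n $; and since every unity map is a neural ring homomorphism, $ \um{\C} $ contributes $ n $. These together give the summand $ 3n $, so the whole problem reduces to showing that the non-BPM non-UM maps in $ \nrh{\C} $ number exactly $ 9\,d!+12n $, where $ d=n/3 $.

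\textbf{Classification of the remaining maps.} Let $ \phi\leftrightarrow\{a_i\}_{i\in[n]} $ be a non-BPM non-UM map in $ \nrh{\C} $. Since $ \norm{x_j}=3 $ for every $ j $, the necessary condition reads $ \norm{\phi(x_i)}\in\{0,3,n\} $ for all $ i $. First I would rule out the value $ n $: if $ \norm{\phi(x_i)}=n $ for some $ i $, then all of $ \sum_k|a_k|=n $ is carried by $ |a_i|+|a_{i+1}|+|a_{i+2}| $, forcing $ |a_k|=0 $ for $ k\notin\{i,i+1,i+2\} $; inspecting the neighbouring windows $ x_{i\pm1},x_{i\pm2} $ then forces either a unity map or $ n=6 $, both excluded for $ d>2 $. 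With the value $ n $ eliminated, the summation identity
\begin{equation*}
	\sum_{i=1}^{n}\norm{\phi(x_i)}=3\sum_{i=1}^{n}|a_i|=3n
\end{equation*}
together with $ \norm{\phi(x_i)}\in\{0,3\} $ forces $ \norm{\phi(x_i)}=3 $ for every $ i $. Comparing consecutive windows then gives $ |a_{i+3}|=|a_i| $, so $ |a_i| $ is constant on each residue class mod $ 3 $ (each of size $ d $, since $ 3\mid n $), and the three constant values $ (c_1,c_2,c_3) $ satisfy $ c_1+c_2+c_3=3 $. Discarding the profile $ (1,1,1) $, which consists exactly of the BPM's already counted, leaves only the $ (3,0,0) $-type and $ (2,1,0) $-type profiles to handle.

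\textbf{Counting the two surviving profiles.} For a $ (3,0,0) $-type profile one residue class (3 choices) carries $ |a_i|=3 $ while the other two carry $ 0 $; each nonzero image must be a consecutive triple $ x_m $, and multiplicativity forces these $ d $ triples to partition $ [n] $. Since $ [n] $ admits exactly $ 3 $ partitions into consecutive triples and the $ d $ source indices may be matched to the $ d $ triples in $ d! $ ways, this yields $ 3\cdot 3\cdot d!=9\,d! $ maps, each readily verified to be a genuine neural ring homomorphism. The harder count is the $ (2,1,0) $-type ($ 6 $ residue assignments). Here I would show that each doubleton image $ S_a=\supo(\phi(\rho_a)) $ must combine with two \emph{distinct} singleton images to form consecutive triples, which forces $ S_a $ to be a pair of adjacent coordinates flanked on both sides by singleton images. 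This makes the images tile $ [n] $ by alternating dominoes and singletons — exactly $ 3 $ such tilings — and reduces the admissible labellings to the colour-preserving isomorphisms between the abstract flanking $ 2d $-cycle $ S_1T_2S_4T_5\cdots $ and the geometric tiling cycle; there are exactly $ 2d $ of these (the even rotations and the vertex-reflections of an alternating $ 2d $-cycle). Thus the $ (2,1,0) $-type contributes $ 6\cdot 3\cdot 2d=36d=12n $ maps.

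\textbf{Conclusion and main obstacle.} Adding the four contributions gives $ |\nrh{\C}|=2n+n+9\,d!+12n=3n+3^2(n/3)!+12n $, as claimed. I expect the main obstacle to be the $ (2,1,0) $-type count: proving rigorously that every double image is an adjacent pair flanked by singletons (ruling out the gap configuration $ \{m,m+2\} $ via the two-distinct-neighbour argument, valid since $ d>1 $), that the tiling is forced to alternate, and that the valid labellings are in bijection with the $ 2d $ colour-preserving automorphisms of the alternating $ 2d $-cycle. The exclusion $ d>2 $ is essential throughout, since for $ n=6 $ both the value $ n $ survives the window analysis and the flanking cycle degenerates.
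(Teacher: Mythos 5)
Your proposal is correct and follows essentially the same route as the paper's proof: split off the $2n$ basis permutation maps (Theorem \ref{thnrhbpm}) and the $n$ unity maps, classify the remaining maps by their residue-class profiles into the $(3,0,0)$-type and $(2,1,0)$-type cases, and count these as $3^2\left(\frac{n}{3}\right)!$ and $6\cdot 2n=12n$ respectively. If anything, your window-sum identity $\sum_{i=1}^{n}\norm{\phi(x_i)}=3n$ (forcing $\norm{\phi(x_i)}=3$ for all $i$) and your two-distinct-neighbour argument for adjacency of the doubleton images make explicit two classification steps that the paper only asserts by analogy with the proof of Theorem \ref{thnpk2}.
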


\begin{proof}
		Let us first count the total number of non-BPM and non-UM  that are in $ \nrh{\C}. $ Let $ \phi $ be a non-BPM and non-UM  with $ \{a_i\}_{i\in[n]} $ as its representing vectors. As observed in the proof of Theorem \ref{thnpk2} we have cases in which  there exists $ i\in[3] $ such that $ |a_i|=3 $ and for all $ j\in[3]\backslash\{i\}, |a_j|=0. $  Also, as there is another partition of $ 3 $ which is not all ones (namely $ 3=2+1 $)  we get more cases which will corresponds to $ |a_1|=2,|a_2|=1,|a_3|=0 $ and its possible permutations. Thus in total we will have these 2 broader class of cases. Let us fix one type of choice and count how many such neural ring endomorphisms it corresponds to. By the choice of all $ |a_i| $ we see that for all $i\in [n],\ \norm{\phi(x_i)}=3. $ This implies for all $ i\in [n] $ there exists $ j\in[n] $ such that $ \phi(x_i^{})=x_j^{}. $
		\begin{caseof}
			\casea{$ (|a_1|,|a_2|,|a_3|)=(3,0,0) $ or $ (|a_1|,|a_2|,|a_3|)= (0,3,0) $ or $ (|a_1|,|a_2|, |a_3|)=(0,0,3). $} Let us consider the sub-case when  $  (|a_1|,|a_2|,|a_3|)=(3,0,0). $ \\ This case is similar to case 1 as in the proof of Theorem \ref{thnpk2}. Firstly it is clear that $ \phi(\rho_1) $ has $ n $ choices and $ \phi(\rho_2)=\phi(\rho_3)=0. $ Next, for $ \phi(\rho_4) $ we have to choose from all the triplets that are left. So we get $ \left( \frac{n}{3}-1\right)  $ choices. Further completing the process we get the total maps that are in $ \nrh{\C} $ as $ n\times\left( \frac{n}{3}-1\right)\times\left( \frac{n}{3}-2\right)\times1=3\left(\frac{n}{3}\right)! $. \\
			 The other 2 subcases will be similar to the above case. Hence Case 1 gives us $ 3^2\left(\frac{n}{3}\right)! $ non-BPM and non-UM maps that are in $ \nrh{\C}. $
			\casea{ For some $ i,j\in[3], i\not =j $  let $ |a_i|=2 $ and $ |a_j |=1$}{Then by permuting $ i,j\in[3] $ we get 6 sub-cases. Consider the sub-case when $ (|a_1|,|a_2|,|a_3|)=(2,1,0) $. } \\ In this sub-case firstly we get that $ \phi(\rho_1) $ can take any consecutive sum of basis elements and so it has $ n $ choices. Let $ \phi(\rho_1)=\rho_l+\rho_{\mdsum{l+1}}. $ Next as $ \phi(x_1)\in\{x_k\} $ it ensures that $ \phi(\rho_2) $ can either be $ \rho_{\mdsum{l+n-1}} $ or $ \rho_{\mdsum{l+2}}. $ We already know that $ \phi(\rho_3)=0. $ Further we observe that this process fixes a unique choice for remaining $ \phi(\rho_k) $ for $ k\in[n]\backslash[3]. $ Hence this sub-case gives us $ 2n $ non-BPM and non-UM  that are in $ \nrh{\C}. $ 
			
			\no The remaining 5 sub-cases will be similar to the above sub-case. Hence we get $  12n$  non-BPM and non-UM  that are in $ \nrh{\C}. $ 
		\end{caseof}
\no	As described in the previous proofs we get $ 3n $ BPM and UM maps that are in $ \nrh{\C}. $ Hence the result.
\end{proof}
\no Looking at the pattern from Theorems  \ref{thnpk2} and \ref{thnp3k} we end our paper with the following conjecture.
\begin{conjecture} \label{conj2}
	Let $ \C $ be a circulant code  with support $ p. $ If \ $ 3<p<n-1$ and $ p|n $ then $ |\nrh{\C}|=(p^2+p+3)n+p^2\left( \dfrac{n}{p}\right)!.  $
\end{conjecture}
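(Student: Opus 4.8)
The plan is to split $\nrh{\C}$ exactly as in the proofs of Theorems \ref{thnpk2} and \ref{thnp3k}: every $\phi\in\nrh{\C}$ is either a unity map, a basis permutation map, or a non-BPM non-UM map. The $n$ unity maps always lie in $\nrh{\C}$, and since $p$ is prime with $p\mid n$ we have $2<p<n-1$, so Theorem \ref{thnrhbpm} contributes exactly $2n$ basis permutation maps. Thus $|\nrh{\C}|=3n+N$, where $N$ is the number of non-BPM non-UM neural ring endomorphisms, and it suffices to prove $N=p^2\left(\dfrac{n}{p}\right)!+p(p+1)n$.

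Next I would analyse such a $\phi\leftrightarrow\{a_i\}_{i\in[n]}$ through its weight vector $(|a_1|,\dots,|a_n|)$. The first step is to show the necessary condition forces $\norm{\phi(x_i)}=p$ for every $i$; here $\GCD(p,n)=p$, so Lemma \ref{lemncnrh} does not apply directly, and I would instead adapt the Claim in the proof of Theorem \ref{thnp2} to rule out $\norm{\phi(x_i)}\in\{0,n\}$, using the primality of $p$ to prevent the weight from collapsing onto a single window without producing a unity map. Once $\norm{\phi(x_i)}=\norm{\phi(x_{i+1})}=p$, the identity $\norm{\phi(x_i)}-\norm{\phi(x_{i+1})}=|a_i|-|a_{\mdsum{i+p}}|$ gives $|a_i|=|a_{\mdsum{i+p}}|$, so the weights are periodic of period $p$ and are encoded by a single composition $(w_1,\dots,w_p)$ of $p$ (as in Observation \ref{obsgcd}); primality guarantees this period is genuinely $p$ unless all $w_j=1$, i.e.\ unless $\phi$ is a BPM.

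I would then sort the admissible compositions into three families. The all-ones composition is exactly the BPM case and is discarded. The $p$ concentrated compositions (one part equal to $p$) force every nonzero image $\phi(\rho_i)$ to be a full window $x_j$, and these windows must tile the cycle; counting the position of the first window and then ordering the remaining $\tfrac{n}{p}-1$ windows yields $n\cdot\left(\tfrac{n}{p}-1\right)!=p\left(\dfrac{n}{p}\right)!$ maps per composition, hence $p^2\left(\dfrac{n}{p}\right)!$ altogether---exactly the first term. The remaining mixed compositions are meant to supply the $p(p+1)n$ term: for a fixed admissible mixed pattern one chooses the position of the sliding window ($n$ choices) and a single left/right orientation for the block removed at each step ($2$ choices), after which the whole assignment is forced, giving $2n$ maps, matching the $6$ orderings of $(2,1,0)$ seen for $p=3$ in Theorem \ref{thnp3k}.

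The main obstacle is precisely this last enumeration. Local consistency---that each $\phi(x_i)$ is a consecutive block---is easy to arrange, but it is far from clear that only $\tfrac{p(p+1)}{2}$ mixed patterns survive the global requirement that the supports $\{a_i\}$ disjointly cover all of $[n]$ around the full cycle, since a naive count of the non-trivial, non-concentrated compositions of $p$ suggests many more candidate weight patterns that appear to close up consistently. The heart of the proof must therefore be to show, using $p$ prime and $p\mid n$, that all but $\tfrac{p(p+1)}{2}$ of these patterns are globally inadmissible, so that the mixed contribution is $2n\cdot\tfrac{p(p+1)}{2}=p(p+1)n$ and not something larger. Establishing that sharp cut-down is the step I expect to be hardest, and the one that determines whether the stated count is exactly $p(p+1)n$.
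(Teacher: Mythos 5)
First, a point of orientation: the paper does not prove this statement at all --- it appears only as a conjecture, extrapolated from the patterns in Theorems \ref{thnpk2} and \ref{thnp3k} --- so there is no paper proof to compare yours against. Your skeleton is the natural extension of the paper's method, and its bookkeeping is correct where it can be checked: $p>2$ prime with $p\mid n$ forces $2<p<n-1$, so Theorem \ref{thnrhbpm} contributes $2n$ basis permutation maps, the $n$ unity maps are always present, and each of the $p$ concentrated weight patterns contributes $n\cdot\left(\frac{n}{p}-1\right)!=p\left(\frac{n}{p}\right)!$ tilings, recovering the term $p^2\left(\frac{n}{p}\right)!$. But as a proof the proposal has two genuine gaps, one of which you flag yourself and one of which you miss.

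The gap you miss is fatal to the statement as literally written. Your first step --- ruling out $\norm{\phi(x_i)}\in\{0,n\}$ so that the weights become $p$-periodic --- fails when $n=2p$, a case the hypothesis $p\mid n$ allows. Pick $i\not=j$ with $j\not=\mdsum{i+p}$ and any $l\in[n]$, and define $\phi$ by $\rho_i\mapsto x_l$, $\rho_j\mapsto x_{\mdsum{l+p}}$, $\rho_k\mapsto 0$ otherwise. Since $n=2p$, the windows $x_l$ and $x_{\mdsum{l+p}}$ partition the basis, so $\phi$ preserves unity and orthogonality, and every window $x_m$ contains $i$ only, $j$ only, both, or neither, so $\phi(x_m)\in\{x_l,\,x_{\mdsum{l+p}},\,1,\,0\}$. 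Hence $\phi\in\nrh{\C}$ is non-BPM and non-UM, yet $\norm{\phi(x_m)}=n$ for the windows containing both $i$ and $j$, and its weight vector (two nonzero entries equal to $p$ at non-antipodal positions) is not $p$-periodic; such maps lie outside all three of your families. This is consistent with the paper's own brute-force count $|\nrh{\C}|=270$ at $(p,n)=(3,6)$ against the conjectured value $108$: the formula can only be correct under $n/p>2$ (as in Theorem \ref{thnp3k}, where $d>2$ is assumed), a hypothesis your argument never states or uses. The second gap is the one you name: cutting the $\binom{2p-1}{p-1}-p-1$ mixed weak compositions down to exactly $p(p+1)/2$ admissible patterns, each contributing exactly $2n$ maps. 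For $p=5$ that means eliminating $105$ of $120$ candidates such as $(2,2,1,0,0)$, and nothing in the sketch accomplishes this --- primality enters only as a hope --- nor is the ``$2n$ per pattern'' claim verified beyond $p\le 3$. What you have is a program with correct boundary data, not a proof; both the missing restriction $n>2p$ and the global cut-down of mixed patterns must be supplied before the count can be believed.
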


	\bibliography{refs}
	
\end{document}